\tikzset{every path/.style={line width=0.4pt},every node/.style={transform shape,knot crossing,inner sep=1.5pt},>=triangle 60,text node/.style={rectangle,transform shape=false,black}}
\theoremstyle{plain}      
\newtheorem{thm}{Theorem}[section]     
\newtheorem{theorem}[thm]{\bf Theorem}     
\newtheorem{lemma}[thm]{\bf Lemma}     
\newtheorem{proposition}[thm]{\bf Proposition}
\theoremstyle{remark}      
\newtheorem{example}[thm]{Example} 
\newtheorem{remark}[thm]{Remark} 
\theoremstyle{definition}      
\newtheorem{definition}[thm]{\bf Definition}     
 \newcommand{\Aut}{\operatorname{Aut}}
\newcommand{\C}{{\mathbb C}}
\newcommand{\T}{\mathcal{T}}
\newcommand{\OO}{\mathcal{O}}
\newcommand{\Spec}{\operatorname{Spec}}
\newcommand{\Hom}{\operatorname{Hom}}
\newcommand{\Ext}{\operatorname{Ext}}
\newcommand{\rank}{\operatorname{rank}}
  \newcommand{\PP}{\mathbb{P}}
\title[]{}
\subjclass[2020]{13D10, 14B10, 14B12, 14D15, 14B07}
\keywords{Infinitesimal deformations,  abstract rigidity, local rigidity, cones over projective varieties, isolated singularities}
\begin{document}

\title{On infinitesimal deformations of singular curves} %     

\author{Mounir Nisse}
 \address{Mounir Nisse\\
Department of Mathematics, Xiamen University Malaysia, Jalan Sunsuria, Bandar Sunsuria, 43900, Sepang, Selangor, Malaysia.
}
\email{mounir.nisse@gmail.com, mounir.nisse@xmu.edu.my}
\thanks{}

\title{On infinitesimal deformations of singular varieties I}

\maketitle

\begin{abstract}
The deformation theory of singular varieties plays a central role in understanding the geometry and moduli of algebraic varieties. For a variety $X$ with possibly singular points, the space of first-order infinitesimal deformations is given by
\(
T^1_X = \operatorname{Ext}^1_{\mathcal{O}_X}(\Omega_X, \mathcal{O}_X),
\)
which measures the Zariski tangent space to the deformation functor of $X$. When $T^1_X = 0$, the variety is said to be \emph{rigid}; otherwise, nonzero elements of $T^1_X$ correspond to nontrivial first-order deformations. 
We investigate the structure of $T^1_X$ for singular varieties and provide cohomological and geometric criteria ensuring non-rigidity. In particular, we show that if the sheaf of tangent fields $T_X$ possesses nonvanishing cohomology $H^1(X, T_X)$ or if the local contributions $\mathcal{E}xt^1(\Omega_X, \mathcal{O}_X)$ are supported on a positive-dimensional singular locus, then $T^1_X \neq 0$. For hypersurface singularities $X = \{ f = 0 \} \subset \mathbb{C}^{n+1}$, we recover the Jacobian criterion,
\[
T^1_X \cong \frac{\mathbb{C}[x_0, \dots, x_n]}{(f, \partial f / \partial x_0, \dots, \partial f / \partial x_n)},
\]
where the positivity of the Tjurina number $\tau(X)$ characterizes the existence of nontrivial deformations. 
Moreover, non-rigidity arises when $X$ % 
appears as a cone over a projectively nonrigid variety.
These criteria provide effective tools for detecting non-rigidity in both local and global settings, linking the vanishing of Ext and cohomology groups to the deformation behavior of singularities. The results contribute to a deeper understanding of the interplay between singularity theory, moduli, and the rigidity properties of algebraic varieties. To be more precise, these results provide effective geometric criteria for 
detecting when a singular variety contributes positive-dimensional 
components to its moduli space and clarify how singularities influence the 
local structure of moduli.

\end{abstract}

%%%%%%%%%%%%%%%%%%%%%%%%%%%%%%%

%\tableofcontents

\section{Introduction}

The notion of \emph{rigidity} plays a central role across many branches of geometry and topology.  
Roughly speaking, rigidity refers to the idea that certain geometric or topological structures are uniquely determined by local data, or that they admit no nontrivial deformations preserving some prescribed structure.
In algebraic geometry, one formalizes this by saying that a variety $X$ is \emph{rigid} if its space of first-order infinitesimal deformations vanishes, i.e.
\(
T^1_X = \operatorname{Ext}^1(\Omega_X, \mathcal{O}_X) = 0.
\)
Intuitively, $T_X^1$ measures the possible infinitesimal ways of deforming $X$ within its ambient category.  
When $T_X^1 = 0$, the object $X$ is isolated in its moduli space, i.e.  there are no nearby non-isomorphic varieties that share its structure.  
For smooth algebraic curves, rigidity is completely characterized by the genus.  
If $X$ is a smooth projective curve of genus $g$, then
\(
T^1_X \cong H^1(X, T_X) \cong H^0(X, \omega_X^{\otimes 2})^\vee,
\)
and therefore
\[
\dim T^1_X =
\begin{cases}
0, & g = 0,\\
1, & g = 1,\\
3g - 3, & g \ge 2.
\end{cases}
\]
Hence, the projective line $\mathbb{P}^1$ is the only rigid smooth curve over an algebraically closed field, while elliptic and higher-genus curves admit nontrivial deformations corresponding to their moduli spaces (see Appendix A).
Rigidity questions for singular curves provide a bridge between singularity theory, deformation theory, and moduli problems. They are central to  understanding the local structure of the moduli of curves, the stability of curve singularities, and the classification of isolated complete intersection singularities. In particular, determining whether a given singular curve is rigid involves analyzing the interplay between its Jacobian relations, its non-reduced or embedded components, and the geometry of its normalization.
The goal of this paper is to explore the conditions under which a curve singularity can be rigid, the role played by its reduced and non-reduced structures, and the geometric meaning of the vanishing (or non-vanishing) of its infinitesimal deformation space $T^1_X$.

\vspace{0.1cm}
 
The concept of {\em rigidity} occupies a central position in deformation theory, algebraic geometry, and singularity theory. % 
Formally, for a complex space or scheme $X$, rigidity is expressed by the vanishing of its first-order deformation space,
$
T^1_X = 0,
$
where $T^1_X$ is the first tangent cohomology group $\mathrm{Ext}^1_{\mathcal{O}_X}(\Omega^1_X,\mathcal{O}_X)$. The space $T^1_X$ governs infinitesimal deformations, while obstructions to extending them lie in $T^2_X$. When $T^1_X=0$, every small deformation is trivial, and the germ $(X,0)$ is analytically rigid.

Rigidity phenomena are particularly subtle in the case of curve singularities, that is, 1-dimensional complex analytic or algebraic spaces with singular points. The local deformation theory of such singularities traces its origins to the pioneering work of Schlessinger (1968), who formalized the notion of deformation functors and established general criteria for their representability. His results provided the abstract framework in which the tangent space $T^1_X$ naturally arises as the Zariski tangent space to the deformation functor $\mathcal{D}\mathrm{ef}_X$. Subsequent work by Greuel, Looijenga, and others in the 1970's and 1980's connected this functorial perspective to explicit computational invariants of curve singularities, such as the {\em Tjurina number}, the $\delta$-invariant, and the {\em Milnor number} $\mu$.

%%%%%%%%%%%%%%%%%%%%%%%%%%%%

While algebraic rigidity is expressed in terms of infinitesimal deformations, similar ideas arise in the context of low-dimensional topology, particularly in the study of \emph{knots, links}, and \emph{3-manifolds}.  
A knot can be viewed as a smooth embedding $K: S^1 \hookrightarrow S^3$, and one can ask whether two knots that are topologically equivalent can be deformed through smooth embeddings; that is, whether the embedding type of a knot is \emph{rigid} under certain geometric or analytic constraints.

From an analytic viewpoint, rigidity can be understood through the deformation theory of complex structures, particularly on Riemann surfaces.  
A complex structure on a smooth surface $S$ can be encoded by a Cauchy--Riemann operator
\(
\bar{\partial} : \mathcal{A}^0(T_S) \longrightarrow \mathcal{A}^{0,1}(T_S),
\)
and infinitesimal deformations correspond to elements of $H^{0,1}(T_S) \cong H^1(S, T_S)$.  
Thus, the analytic deformation theory precisely mirrors the algebraic one:  
the space of infinitesimal deformations of the complex structure is the same as $T_X^1$ in the algebraic setting.
When $S$ has genus $g = 0$, the complex structure is unique (corresponding to $\mathbb{P}^1$), reflecting analytic rigidity.  
For genus $g \ge 1$, the nonvanishing of $H^1(S, T_S)$ gives rise to the Teichmüller space $\mathcal{T}_g$, a complex manifold of dimension $3g - 3$, parameterizing inequivalent complex structures on $S$.

The interplay between analytic deformation theory and geometric topology becomes even more profound when considering $3$-manifolds that fiber over Riemann surfaces, or when studying character varieties of surface groups into Lie groups.  
In these contexts, the deformation of complex structures on Riemann surfaces corresponds to geometric deformations of $3$-manifolds, bridging complex analysis, hyperbolic geometry, and low-dimensional topology.
The comparison emphasizes the dimensional transition from flexibility to rigidity, e.g.
in dimension $1$ and $2$ (curves and surfaces), complex and algebraic structures admit rich moduli spaces,
whereas in dimension $3$ and higher, geometric structures tend to be rigid, constrained by deep topological invariants.
Rigidity, whether in algebraic geometry or low-dimensional topology, captures the tension between flexibility and uniqueness of geometric structures.  
For smooth algebraic curves, only $\mathbb{P}^1$ is rigid, while higher-genus curves admit rich moduli.

\vspace{0.1cm}

For smooth varieties, the theory is well understood, one has $T^1_X \cong H^1(X, T_X)$, and the obstructions lie in $H^2(X, T_X)$. However, for singular varieties the situation is more subtle, as local contributions from the singular locus appear through the sheaves $\mathcal{E}xt^i(\Omega_X, \mathcal{O}_X)$. These contributions often encode the deformation behavior of individual singular points and determine whether the singularity is \emph{rigid} or \emph{smoothable}. A classical local model is the hypersurface singularity
\(
X = \{ f = 0 \} \subset \mathbb{C}^{n+1},
\)
for which
\(
\dim_{\mathbb{C}}(T^1_X) = \tau(X)\)
 measures the number of independent infinitesimal deformations. Such an $X$ is rigid precisely when the Tjurina number $\tau(X) = 0$.

Several criteria for rigidity and non-rigidity have been established in the literature. 
In this work, we focus on describing and refining cohomological and geometric criteria ensuring non-rigidity for singular varieties, more precisely, on curves and surfaces. We analyze the structure of  the space of \emph{global first-order deformations} of the variety \(X\)
    (including smoothing directions of singularities)
$T^1_X$ and its local and global components, emphasizing how the presence of nontrivial $H^1(X, T_X)$ or positive-dimensional support of $\mathcal{E}xt^1(\Omega_X, \mathcal{O}_X)$ forces nontrivial infinitesimal deformations. These results contribute to the broader understanding of the deformation theory of singular spaces and the mechanisms by which rigidity fails in both local and global settings. Moreover, we examine several detailed and illustrative examples to better understand the failure of rigidity in most singular reduced curves and normal surfaces.

%%%%%%%%%%%%%%%%%%%%%%%%%%%%%%%%%%%%%%%%%%%%%%%%%%%%%%%%%%%%%%%%%%%%%%%%%%
%%%%%%%%%%%%%%%%%%%%%%%%%%%%%%%%%%%%%%%%%%%%%%%%%%%%%%%%%%%%%%%%%%%%%%%%%%

A fundamental feature of an isolated complex singularity is that its
local topology is captured by its link, a smooth compact manifold obtained
as the intersection with a small sphere. Moreover, its infinitesimal deformations are
governed by its cotangent cohomology.
While the link is topologically invariant under small deformations, its
additional geometric structures, such as contact, CR, or other
structures may vary nontrivially in families ({\it cf} \cite{N4-25} and \cite{N5-25}).
Thus, deformation theory provides a bridge between analytic deformations of
singularities and deformations of geometric structures on their links. Moreover, there are some applications in physics as gravity, black hole, and crossing event horizons ({\it cf} \cite{LN1-25} and \cite{LN2-25}). These will be a continuation of our actual work.

 \vspace{0.2cm}
 
The organization of the paper is as follows. 
Section~2 reviews the necessary preliminaries and introduces the tools used throughout the work. 
Section~3 establishes several criteria for rigidity and non-rigidity, together with their geometric interpretations. 
Section~4 presents a criterion for the non-vanishing of \(T_X^{1}\). 
Section~5 provides a collection of detailed illustrative examples. 
Section~6 examines the local-to-global Ext spectral sequence and studies rigidity through the computation of 
\(\dim \Ext^{1}(\Omega_X, \mathcal{O}_X)\), accompanied by geometric interpretations and additional examples. 
Finally, Section~7 contains detailed proofs of standard results in deformation theory, along with further illustrative examples.

%%%%%%%%%%%%%%%%%%%%%%%%%%%%%%%%%%%%%%%%%%%%%%%%%%%%%%%%%%%%%%%%%%%%%%%%%%
%%%%%%%%%%%%%%%%%%%%%%%%%%%%%%%%%%%%%%%%%%%%%%%%%%%%%%%%%%%%%%%%%%%%%%%%%%

\section{Preliminaries}

The study of \emph{infinitesimal deformations of singular curves} lies at the intersection of algebraic geometry, singularity theory, and deformation theory. 
Roughly speaking, one seeks to understand how a given singular curve $X$  defined over an algebraically closed field $k$, where  in this paper it is typically $\mathbb{C}$, can be locally or globally deformed within families of nearby algebraic curves, and to characterize the tangent and obstruction spaces that govern such deformations. 
From a geometric viewpoint, this analysis describes how the embedded or abstract structure of $X$ changes under small perturbations of its defining equations, while from an algebraic perspective it is encoded in the cotangent complex and its cohomology.

\vspace{0.1cm}

Let $X$ be a reduced curve, possibly singular, defined over $\mathbb{C}$. 
A \emph{first-order (infinitesimal) deformation} of $X$ is a flat family 
\[
\pi : \mathcal{X} \to \operatorname{Spec}(\mathbb{C}[\varepsilon]/(\varepsilon^2))
\]
together with an identification $\mathcal{X}_0 \simeq X$. 
Such a deformation represents a “first-order thickening’’ of $X$ and captures infinitesimal directions along which the curve can be deformed. 
The isomorphism classes of such infinitesimal deformations form a finite-dimensional $\mathbb{C}$-vector space, denoted $T_X^1$, the \emph{Zariski tangent space to the deformation functor of $X$}. 
Formally, one may express
\[
T_X^1 \;=\; \operatorname{Ext}^1_{\mathcal{O}_X}(\Omega_X, \mathcal{O}_X),
\]
which measures infinitesimal extensions of the structure sheaf $\mathcal{O}_X$ by its module of Kähler differentials $\Omega_X$. 
Obstructions to extending such deformations to higher order are encoded in 
$\operatorname{Ext}^2_{\mathcal{O}_X}(\Omega_X, \mathcal{O}_X)$.

\medskip

When $X$ is smooth, infinitesimal deformations are unobstructed, and the classical Kodaira--Spencer theory identifies
\[
T_X^1 \;\simeq\; H^1(X, T_X),
\]
where $T_X$ is the tangent sheaf of $X$ (cf.~\cite{K-86, GH-78}, or \cite{H-10}) . 
However, in the singular case the sheaf $\Omega_X$ is not locally free, and the deformation space acquires additional local contributions at the singular points. 
Indeed, from the local-to-global spectral sequence for cotangent cohomology
\[
E_2^{p,q} \;=\;
H^p\!\left(X,\, \mathcal{T}^q_X \right)
\;\Rightarrow\;
T^{p+q}_X,
\]
  one obtains the low-degree exact sequence
\[
0 \longrightarrow H^1(X,\mathcal{T}_X)
  \longrightarrow T_X^1
  \longrightarrow H^0(X,\mathcal{T}^1_X)
  \longrightarrow H^2(X,\mathcal{T}_X).
\]
If \(X\) has only isolated singularities, then
\[
H^0(X,\mathcal{T}^1_X)
  \;=\; \bigoplus_{p\in \operatorname{Sing}(X)}\!
         T^1_{\mathcal{O}_{X,p}},
\]
and the sequence becomes
  the following  natural exact sequence
\[
0 \longrightarrow H^1(X, T_X) \longrightarrow T_X^1 \longrightarrow \bigoplus_{p\in \operatorname{Sing}(X)} T^1_{\mathcal{O}_{X,p}} \longrightarrow 0,
\]
in which the first term governs the global (or embedded) deformations of the normalization of $X$, and the second term aggregates the local deformation spaces of the singularities (for more details see Illusie \cite{I-71} III.2.2, III.3.5, or Hartshorne \cite{H-10}
Theorem~8.5.1], or Sernesi \cite{S-06}
Proposition~3.4.9). 
These local spaces
\[
T^1_{\mathcal{O}_{X,p}} = \operatorname{Ext}^1_{\mathcal{O}_{X,p}}(\Omega_{X,p}, \mathcal{O}_{X,p})
\]
describe the infinitesimal versal deformation of the singularity at a point $p$ and determine whether it is rigid, smoothable, or admits moduli.

%\medskip

In many situations of interest, for instance, for plane curve singularities or local complete intersections, the local deformation theory of $X$ can be described explicitly via the \emph{Tjurina algebra}
\[
T^1_{\mathcal{O}_{X,p}} \;\cong\; \mathbb{C}\{x_1,\ldots,x_n\} / (f, \partial f/\partial x_1, \ldots, \partial f/\partial x_n),
\]
whose dimension $\tau(X,p)$ is the \emph{Tjurina number} of the singularity. 
The global behavior of the curve’s deformation space, including questions of smoothability or rigidity, depends crucially on the balance between these local invariants and the global cohomological constraints.

%\medskip

Understanding infinitesimal deformations this provides the first step toward constructing the \emph{versal deformation space} of $X$ and analyzing the geometry of the corresponding base. 
In the case of singular projective curves, this leads naturally to the study of the local structure of the Hilbert and moduli schemes near points representing $X$, as well as to relations with equisingularity, smoothing components, and deformation-obstruction theories in modern enumerative geometry.

%%%%%%%%%%%%%%%%%%%%%%%%%%%%%
%%%%%%%%%%%%%%%%%%%%%%%%%%%%%
%%%%%%%%%%%%%%%%%%%%%%%%%%%%%

\vspace{0.2cm}

Let \(X\) be a reduced projective curve over a field \(k\).
The space of first-order infinitesimal deformations of \(X\) is
\(
\operatorname{Ext}^1(\Omega_X,\mathcal{O}_X).
\)
\begin{definition}
Let \(X\) be a reduced projective curve over a field \(k\).
We say that \(X\) is \emph{rigid} if and only if every first-order deformation is trivial, i.e. \( \operatorname{Ext}^1(\Omega_X,\mathcal{O}_X)=0.\)
\end{definition}
%
%\medskip
\noindent From the local-to-global exact sequence
\[
0 \longrightarrow H^1(T_X)
\longrightarrow \operatorname{Ext}^1(\Omega_X,\mathcal{O}_X)
\longrightarrow H^0(\mathcal{T}^1_X)
\longrightarrow 0,
\]
where
\[
T_X := \mathcal{H}om(\Omega_X,\mathcal{O}_X)
\quad\text{and}\quad
\mathcal{T}^1_X := \mathcal{E}xt^1(\Omega_X,\mathcal{O}_X).
\]
We see that rigidity requires the vanishing of both terms:
\[
H^1(T_X)=0 \quad\text{and}\quad H^0(\mathcal{T}^1_X)=0.
\]

\vspace{0.1cm}
%%%%%%%%%%%%%%%%%

\noindent
{Geometrically, this means the following:}
\begin{itemize}
  \item[(i)] \(H^0(T_X)\) corresponds to \emph{globally trivial} infinitesimal deformations,
  i.e.\ those induced by global vector fields (infinitesimal automorphisms of \(X\)).

  \item[(ii)] \(H^1(X,T_X)\) corresponds to \emph{locally trivial} first-order deformations:
  these deform \(X\) as a global scheme but leave each local analytic type unchanged.

  \item[(iii)] \(\mathcal{T}^1_X\) measures the \emph{local non-triviality} of deformations:
  it is supported at the singular locus of \(X\), and its stalk at a singular point \(p\)
  is the local Tjurina algebra
  \[
  \mathcal{T}^1_{X,p}
  \;=\;
  \frac{\mathcal{O}_{\mathbb{A}^n,p}}{(f_1,\ldots,f_r,\,
  \partial f_1/\partial x_i,\ldots,\partial f_r/\partial x_i)},
  \]
  which describes the local space of first-order deformations of the singularity \((X,p)\).
  Thus, \(H^0(\mathcal{T}^1_X)\) is the direct sum of the local deformation spaces of all singular points of \(X\).
\end{itemize}

\vspace{0.1cm}

%%%%%%%%%%%%%%%%%
In other words, we have the following:
\begin{itemize}
  \item[(i)] \(H^1(T_X)=0\) means there are no \emph{locally trivial} global deformations;
  \item[(ii)] \(H^0(\mathcal{T}^1_X)=0\) means all singularities of \(X\) are \emph{locally rigid}.
\end{itemize}

\vspace{0.2cm}

\begin{example}${}$

\begin{itemize}
  \item[1.] The projective line \(\mathbb{P}^1\) is rigid, since \(H^1(T_{\mathbb{P}^1})=0\).
  \item[2.] A smooth curve of genus \(g\ge 2\) is not rigid:
  its deformations form a moduli space of dimension \(3g-3>0\).
  \item[3.] A nodal cubic curve \(y^2=x^3+x^2\) is not rigid, because
  \(\dim_k\operatorname{Ext}^1(\Omega_X,\mathcal{O}_X)=1\),
  corresponding to its one-parameter smoothing. This example will be developed in 
  details later.
\end{itemize}
\end{example}
%\medskip
\noindent
Therefore, we have the following equivalences:
\[
X \text{ rigid } \iff 
\operatorname{Ext}^1(\Omega_X,\mathcal{O}_X)=0
\iff H^1(T_X)=H^0(\mathcal{T}^1_X)=0.
\]

%%%%%%%%%%%%%%%%%%%%%%%%%%%%
%%%%%%%%%%%%%%%%%%%%%%%%%%%%
%%%%%%%%%%%%%%%%%%%%%%%%%%%%

\vspace{0.2cm}

\begin{example}

Let \(X\subset\mathbb{P}^2\) be the plane cubic given in the affine chart \(z=1\) by
\[
f(x,y)=y^2-x^3-x^2=0.
\]
Assume that we are working over a field \(k\) with \(\operatorname{char}(k) =0\). The unique singular point is \(p=(0,0)\), since
\[
f_x=-3x^2-2x,\qquad f_y=2y,
\]
and \(f=f_x=f_y=0\) forces \(x=y=0\).
\noindent For an isolated hypersurface singularity \(f\) in two variables the local space of first order deformations (the local Tjurina algebra, often denoted \(T^1_{(X,p)}\)) is
\[
T^1_{(X,p)} \;=\; \frac{\mathcal{O}_{\mathbb{A}^2,p}}{(f,\;f_x,\;f_y)}
\;=\; \frac{k[[x,y]]}{\big(f,\;f_x,\;f_y\big)} .
\]
We compute this quotient explicitly for our \(f\). Since \(f_y=2y\) and \(2\) is a unit, \(y\in(f_y)\), so in the quotient we may set \(y=0\). Substituting \(y=0\) into \(f\) and \(f_x\) we get generators in \(k[[x]]\):
\[
f|_{y=0} = -x^3-x^2 = -x^2(x+1),\qquad f_x|_{y=0} = -3x^2-2x = -x(3x+2).
\]
Hence,  \(x\in (f,f_x,f_y)\), and  therefore, both \(x\) and \(y\) lie in the ideal, so the quotient is
\[
\frac{k[[x,y]]}{(f,f_x,f_y)} \cong \frac{k[[x,y]]}{(x,y)} \cong k.
\]
Consequently, the local Tjurina algebra has dimension \(1\):
\[
\dim_k T^1_{(X,p)} = 1 .
\]

\end{example}

\section{Several criteria for rigidity and non-rigidity}
\noindent  Globally, for a reduced projective curve with isolated singularities the sheaf \(\mathcal{T}^1_X\) (whose stalk at a singular point \(p\) is the local Tjurina algebra) is supported at the singular points, and there is an exact sequence identifying the global first-order deformations with the sum of local contributions modulo global automorphisms. 
For our previous 
nodal cubic \(X\) in Example 2.3, the only contribution comes from the single node \(p\), and there is no further \(H^1\) obstruction coming from \(T_X\) (equivalently, the normalization is \(\mathbb{P}^1\), and then any \(H^1\) terms vanish in this case). Hence, the global deformation space
\[
\operatorname{Ext}^1(\Omega_X,\mathcal{O}_X) \cong H^0\big(\mathcal{T}^1_X\big)
\cong T^1_{(X,p)} \cong k.
\]
Moreover, from the local-to-global exact sequence we get $H^1(X, T_X) =0$. % 

%%%%%%%%%%%%%%%%%%%%%%%%%%%%
%%%%%%%%%%%%%%%%%%%%%%%%%%%%
%%%%%%%%%%%%%%%%%%%%%%%%%%%%

\vspace{0.1cm}

\begin{proposition}
Let $X\subset \C^2$ be a plane curve defined by a nonzero polynomial $f\in \C[x,y]$. If $X$ is singular, then $T^1_X\neq 0$ (equivalently $X$ is not rigid locally), i.e. there are  non-trivial infinitesimal deformations of $X$ singularities.
\end{proposition}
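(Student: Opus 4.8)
The plan is to reduce the statement to the Jacobian (Tjurina) description of $T^1_X$ for an affine plane hypersurface, and then to note that the mere existence of a singular point forces the Jacobian ideal to be a proper ideal, hence the quotient ring to be nonzero.

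First I would establish the Jacobian formula. Since $\mathbb{C}[x,y]$ is a domain and $f\neq 0$, the polynomial $f$ is a nonzerodivisor, so $X=V(f)$ is a local complete intersection of codimension one in $\mathbb{C}^2$ and $(f)/(f^2)$ is free of rank one over $\mathcal{O}_X$. Hence the truncated cotangent complex of $X$ is quasi-isomorphic to the two-term complex of free $\mathcal{O}_X$-modules $\big[\,\mathcal{O}_X \xrightarrow{(f_x,\,f_y)} \mathcal{O}_X^{\,2}\,\big]$, the differential sending the generator of $(f)/(f^2)$ to $df=f_x\,dx+f_y\,dy$ in $\Omega_{\mathbb{C}^2}|_X$. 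Applying $\mathcal{H}om_{\mathcal{O}_X}(-,\mathcal{O}_X)$ and taking degree-one cohomology then gives the Jacobian presentation already recorded above,
\[
T^1_X \;=\; \Ext^1_{\mathcal{O}_X}(\Omega_X,\mathcal{O}_X)\;\cong\;\coker\!\big(\mathcal{O}_X^{\,2}\xrightarrow{(f_x,\,f_y)}\mathcal{O}_X\big)\;=\;\frac{\mathbb{C}[x,y]}{(f,\,f_x,\,f_y)} .
\]

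Next I would translate the singularity hypothesis into a property of this ideal. A point $p=(a,b)\in\mathbb{C}^2$ is a non-regular point of the scheme $X=V(f)$ exactly when $f\in\mathfrak{m}_p^2$, that is, when $f(p)=f_x(p)=f_y(p)=0$ (first-order Taylor expansion of $f$ at $p$); thus $\operatorname{Sing}(X)$ coincides, as a subset of $\mathbb{C}^2$, with the common zero locus $V(f,f_x,f_y)$. By hypothesis $X$ is singular, so $V(f,f_x,f_y)\neq\emptyset$, and Hilbert's Nullstellensatz forces $(f,f_x,f_y)\subsetneq\mathbb{C}[x,y]$. Therefore the quotient $\mathbb{C}[x,y]/(f,f_x,f_y)$ is nonzero, and combining with the display above yields $T^1_X\neq 0$; equivalently, $X$ is not locally rigid and carries nontrivial first-order deformations of its singularities.

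The delicate point is the first step when $f$ is not squarefree: the naive conormal sequence $0\to(f)/(f^2)\to\Omega_{\mathbb{C}^2}|_X\to\Omega_X\to 0$ need not be left-exact, so the computation must genuinely be carried out through the cotangent complex of the lci $X$ (if one only cares about reduced curves, as elsewhere in the paper, left-exactness does hold, since the left-hand map is injective at the generic point of each component). An alternative route that avoids the global computation: for the affine scheme $X$ the local-to-global sequence degenerates to $T^1_X\cong\Gamma(X,\mathcal{T}^1_X)$, and $\mathcal{T}^1_X$ is a nonzero quasi-coherent sheaf — its stalk at a singular point $p$ is $\big(\mathcal{O}_X/(f_x,f_y)\big)_p\neq 0$ — supported on the nonempty set $\operatorname{Sing}(X)$, so it has nonzero global sections over the affine $X$. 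I would nonetheless run the Jacobian computation as the main argument, since it simultaneously records the precise value $\dim_{\mathbb{C}} T^1_X=\tau(X)$.
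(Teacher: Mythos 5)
Your proposal is correct and follows essentially the same route as the paper: both rest on the Jacobian presentation $T^1\cong \mathcal{O}/(f,f_x,f_y)$ together with the observation that at a singular point $f,f_x,f_y$ all vanish, so the Jacobian ideal is proper and the quotient is nonzero. The only cosmetic difference is that you argue globally via the Nullstellensatz (and supply more justification for the presentation via the conormal sequence), whereas the paper argues locally at $p$ by noting $(f,f_x,f_y)\subseteq\mathfrak{m}_p$; the substance is identical.
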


\begin{proof}
For a plane curve $X=\{f=0\}\subset \C$ one has the local description
\[
T^1_{X,p}\cong \mathcal{O}_{\C^2,p}/(f,f_x,f_y)
\]
at any point $p\in X$.
If $X$ is singular at $p$ then $f(p)=f_x(p)=f_y(p)=0$, and then  the maximal ideal $\mathfrak{m_p}$ contains the ideal $(f,f_x,f_y)$, thus,
\[
T^1_{X,p}\cong\mathcal{O}_{\C^2,p}/(f,f_x,f_y)\neq 0.
\]
Therefore, $T^1_X$ is nonzero and in fact supported at the singular locus, so $X$ is not  rigid (locally).
\end{proof}

%%%%%%%%%%%%%%%%%%%%%%%%%%%%%%%%%%%%%%%%%%%%%%%%%%%%%%

\vspace{0.2cm}

More generally, we have a formula for any complete intersection. Indeed, let \(X=V(f_1,\dots,f_r)\subset\mathbb{C}^n\) 
%
%%%%%%%%%%%
%%%%%%%%%%%
  be a local complete intersection defined by
a regular sequence $f_1,\dots,f_r$, and let $\mathcal{I}$ be the ideal
sheaf generated by these equations.  Then
\[
\mathcal{I}/\mathcal{I}^2 \cong \mathcal{O}_X^{\,r},
\qquad
\Omega^1_{\mathbb{C}^n}|_X \cong \mathcal{O}_X^{\,n}.
\]

The conormal exact sequence is
\[
0
\longrightarrow \mathcal{I}/\mathcal{I}^2
\longrightarrow \Omega^1_{\mathbb{C}^n}\otimes_{\mathcal{O}_X} \mathcal{O}_X
\longrightarrow \Omega^1_X
\longrightarrow 0,
\]
which, using the identifications above, becomes
\[
0
\longrightarrow \mathcal{O}_X^{\,r}
\overset{df}{\longrightarrow}
\mathcal{O}_X^{\,n}
\longrightarrow
\Omega^1_X
\longrightarrow 0.
\tag{*}
\]
%
%
%
%
%%%%%%%%%%%
%%%%%%%%%%%
 %
\noindent Let
\[
J_f=\left(\frac{\partial f_i}{\partial x_j}\right)_{1\le i\le r,\;1\le j\le n}
\]
be the Jacobian matrix. This defines an $\mathcal{O}_X$-linear map
\[
J_f:\mathcal{O}_X^{\,n}\longrightarrow\mathcal{O}_X^{\,r},\qquad
(\xi_1,\dots,\xi_n)\mapsto\Bigl(\sum_{j=1}^n\xi_j\frac{\partial f_1}{\partial x_j},\dots,
\sum_{j=1}^n\xi_j\frac{\partial f_r}{\partial x_j}\Bigr).
\]
Applying the functor $\operatorname{Hom}_{\mathcal{O}_X}(-,\mathcal{O}_X)$
to the exact sequence $(*)$ and using that the $\mathcal{O}_X$–modules
$\mathcal{O}_X^{\,r}$ and $\mathcal{O}_X^{\,n}$ are locally free,
one obtains the long exact sequence
\[
0 \longrightarrow \operatorname{Hom}_{\mathcal{O}_X}(\Omega^1_X,\mathcal{O}_X)
\longrightarrow \operatorname{Hom}_{\mathcal{O}_X}(\mathcal{O}_X^{\,n},\mathcal{O}_X)
\longrightarrow \operatorname{Hom}_{\mathcal{O}_X}(\mathcal{O}_X^{\,r},\mathcal{O}_X)
\longrightarrow \operatorname{Ext}^1_{\mathcal{O}_X}(\Omega^1_X,\mathcal{O}_X)
\longrightarrow 0,
\]
since the higher Ext groups of locally free sheaves vanish.
Now, by 
identifying
\[
\operatorname{Hom}_{\mathcal{O}_X}(\mathcal{O}_X^{\,n},\mathcal{O}_X) 
\cong \mathcal{O}_X^{\,n},
\qquad
\operatorname{Hom}_{\mathcal{O}_X}(\mathcal{O}_X^{\,r},\mathcal{O}_X) 
\cong \mathcal{O}_X^{\,r},
\]
and noting that the dual map is given by the Jacobian matrix
$J_f = (\partial f_i/\partial x_j)$, we obtain the exact sequence
\[
0\longrightarrow\operatorname{Hom}_{\mathcal{O}_X}(\Omega^1_X,\mathcal{O}_X)
\longrightarrow\mathcal{O}_X^{\,n}
\xrightarrow{J_f}\mathcal{O}_X^{\,r}
\longrightarrow\operatorname{Ext}^1_{\mathcal{O}_X}(\Omega^1_X,\mathcal{O}_X)
\longrightarrow 0.  \hspace{1.7cm}({}^{**})
\]
The sheaf (or module) of first order deformations is
\[
T^1_X:=\operatorname{Ext}^1_{\mathcal{O}_X}(\Omega^1_X,\mathcal{O}_X)
\cong\operatorname{coker}\bigl(J_f:\mathcal{O}_X^{\,n}\to\mathcal{O}_X^{\,r}\bigr)
=\frac{\mathcal{O}_X^{\,r}}{J_f(\mathcal{O}_X^{\,n})}.
\]
In particular, at a point (e.g.\ an isolated complete intersection singularity at $0$),
\[
T^1_{X,0}\cong\frac{\mathcal{O}_{X,0}^{\,r}}{\left\langle
\frac{\partial f_i}{\partial x_j}\;\bigg|\;1\le i\le r,\;1\le j\le n\right\rangle
\cdot\mathcal{O}_{X,0}}.
\]

%%%%%%%%%%%%%%%%%%%%%%%%%%%%%%%%%%%%%%%%%%%%%%%%%%%%%%%
\vspace{0.2cm}

Let $X=V(f_1,\dots,f_r)\subset\mathbb C^n$ be a complete intersection, and let us 
work at a point $p\in X$ i.e. locally.  Consider $\mathcal O_{X,p}=\mathcal O_{\mathbb C^n,p}/(f_1,\dots,f_r)$,
the Jacobian matrix $J_f=(\partial f_i/\partial x_j)_{i,j}$,  and let
\(M=J_f(\mathcal{O}_X^{\,n})\subset\mathcal{O}_X^{\,r}\). The singular locus is then
\[
\operatorname{Sing}(X)=V\Big((f_1,\dots,f_r),\,\,(\text{all }r\times r\text{ minors of }J_f)\Big).
\]
For a point \(p\in X\) we have the local description
\[
T^1_{X,p}\cong\frac{\mathcal{O}_{X,p}^{\,r}}{M_p}.
\] 
Hence, the complex dimension of the local  first order deformation space at $p$ is
\[
\dim_{\mathbb C}T^1_{X,p}=\dim_{\mathbb C}\frac{\mathcal O_{X,p}^{\,r}}{M_p}, % 
\]
which is finite precisely when the singularity at $p$ is isolated.

%%%%%%%%%%%%%%%%%

\vspace{0.3cm}

\noindent 
\subsection{\bf Geometric interpretation of the sequence $({}^{**})$.}
Geometrically, the exact sequence $({}^{**})$
expresses the relationship between tangent vectors, the Jacobian matrix,
and infinitesimal deformations.
An element of $\mathcal{O}_X^{\,n}$ corresponds to a vector field
$v=\sum a_j \frac{\partial}{\partial x_j}$ on the ambient space
$\mathbb{C}^n$ restricted to $X$.  The Jacobian $J_f$ acts by
\[
J_f(v)
= \bigl(v(f_1),\dots,v(f_r)\bigr),
\]
so that $J_f(v)=0$ precisely when $v$ is tangent to all hypersurfaces
$f_i=0$.  Thus
\[
\ker(J_f)=\operatorname{Hom}(\Omega_X^1,\mathcal{O}_X)=T_X
\]
is the sheaf of tangent vectors on $X$.

The cokernel
\[
\operatorname{coker}(J_f)
=\operatorname{Ext}^1_{\mathcal{O}_X}(\Omega_X^1,\mathcal{O}_X)
=T_X^1
\]
measures infinitesimal deformations of $X$ that are not induced by
any ambient vector field.  To be more precise, an infinitesimal deformation
\[
f_i \mapsto f_i + \varepsilon g_i
\]
is trivial (induced by a first-order change of coordinates) exactly when
$(g_1,\dots,g_r)$ lies in the image of $J_f$.  Nontrivial deformations
correspond to elements of the cokernel and represent obstruction
directions in which $X$ may or may not smooth.

%%%%%%%%%%%%%%%%%%%%%%%%%%%%%
%%%%%%%%%%%%%%%%%%%%%%%%%%%%%%

\vspace{0.2cm}
\begin{theorem}
Let \(X=V(f_1,\dots,f_r)\subset\mathbb{C}^n\) be a complete intersection and let \(p\in X\).
Write \(\mathcal{O}_{X,p}=\mathcal{O}_{\mathbb{C}^n,p}/(f_1,\dots,f_r)\) and let
\(J_f=(\partial f_i/\partial x_j)_{i,j}\) be the Jacobian matrix. If \(p\) is a smooth point of \(X\)
then \(\operatorname{rank}(J_f(p))=r\) and \(T^1_{X,p}=0\).
\end{theorem}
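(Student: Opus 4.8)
The plan is to use the exact sequence $({}^{**})$ established above, namely
\[
0\longrightarrow T_X
\longrightarrow\mathcal{O}_X^{\,n}
\xrightarrow{\;J_f\;}\mathcal{O}_X^{\,r}
\longrightarrow T^1_X
\longrightarrow 0,
\]
localized at the point $p$, together with the standard fact that a local complete intersection is smooth at $p$ precisely when the Jacobian matrix has maximal rank $r$ there. First I would recall why smoothness at $p$ forces $\operatorname{rank}(J_f(p))=r$: by the Jacobian criterion, $X$ is smooth of dimension $n-r$ at $p$ exactly when the differentials $df_1,\dots,df_r$ are linearly independent in the cotangent space $\mathfrak{m}_p/\mathfrak{m}_p^2$ of $\mathbb{C}^n$, which is the assertion that the $r\times n$ matrix $J_f(p)$ has rank $r$; since $r\le n$ this is the maximal possible rank.

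The heart of the argument is then to show that maximal rank of $J_f(p)$ implies surjectivity of the map $J_f:\mathcal{O}_{X,p}^{\,n}\to\mathcal{O}_{X,p}^{\,r}$ of free $\mathcal{O}_{X,p}$-modules. Here I would invoke Nakayama's lemma: the cokernel $C = T^1_{X,p}$ is a finitely generated $\mathcal{O}_{X,p}$-module, and by right-exactness of $-\otimes_{\mathcal{O}_{X,p}}k(p)$ we have $C\otimes k(p)=\operatorname{coker}\big(J_f(p):k(p)^n\to k(p)^r\big)$, which vanishes because $J_f(p)$ has rank $r$. Nakayama then gives $C=0$, i.e. $T^1_{X,p}=0$. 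Equivalently, one can phrase this via the minors: the ideal of $r\times r$ minors of $J_f$ does not vanish at $p$, so some minor is a unit in $\mathcal{O}_{X,p}$, whence $J_f$ admits a right inverse over $\mathcal{O}_{X,p}$ and is surjective.

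I expect the main (and only real) obstacle to be purely expository: making precise the passage from the pointwise rank condition to surjectivity of the sheaf/module map, since this is exactly where one must be careful to use Nakayama over the local ring $\mathcal{O}_{X,p}$ rather than naively over a field. A secondary point worth stating cleanly is that the relevant entries of $J_f$ are the images in $\mathcal{O}_{X,p}$ of the partials $\partial f_i/\partial x_j\in\mathcal{O}_{\mathbb{C}^n,p}$, and that evaluating at $p$ is the same whether done before or after passing to $\mathcal{O}_{X,p}$ (since $f_i\in\mathfrak{m}_p$). Once surjectivity is in hand, the conclusion $T^1_{X,p}=\operatorname{coker}(J_f)_p=0$ is immediate from $({}^{**})$, and as a bonus the kernel $T_{X,p}$ is locally free of rank $n-r$, recovering the expected dimension of the tangent sheaf at a smooth point.
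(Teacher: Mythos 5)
Your proposal is correct and follows essentially the same route as the paper: identify $T^1_{X,p}$ with the cokernel of $J_f:\mathcal{O}_{X,p}^{\,n}\to\mathcal{O}_{X,p}^{\,r}$ via the dualized conormal sequence, characterize smoothness by $\operatorname{rank}J_f(p)=r$, and deduce surjectivity of $J_f$ over the local ring. The paper argues surjectivity by noting that a nonvanishing $r\times r$ minor is a unit and hence $J_f$ has a right inverse, which is exactly the alternative phrasing you offer alongside your Nakayama argument; the two are interchangeable here.
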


\begin{proof}
The Zariski (or analytic) tangent space of \(X\) at \(p\) may be computed as
\[
T_{X,p}=\{v\in\mathbb{C}^n:\ J_f(p)\,v=0\},
\]
and then \(\dim_{\mathbb{C}}T_{X,p}=n-\operatorname{rank}J_f(p)\).
Since \(X\) is a complete intersection of codimension \(r\), the point \(p\) is smooth iff
\(\dim T_{X,p}=n-r\). Therefore smoothness at \(p\) is equivalent to \(\operatorname{rank}J_f(p)=r\).

Let \(I=(f_1,\dots,f_r)\). There is the conormal exact sequence (localized at \(p\)):
\[
I/I^2 \xrightarrow{\ \alpha\ } \Omega^1_{\mathbb{C}^n,p}\otimes_{\mathcal{O}_{\mathbb{C}^n,p}}\mathcal{O}_{X,p}
\longrightarrow \Omega^1_{X,p}\longrightarrow 0.
\]
Because \(I\) is generated by the \(f_i\) we have \(I/I^2\cong\mathcal{O}_{X,p}^r\), and the map \(\alpha\)
sends the \(i\)-th standard basis vector to \(df_i\).

Apply \(\operatorname{Hom}_{\mathcal{O}_{X,p}}(-,\mathcal{O}_{X,p})\). Since
\(\Omega^1_{\mathbb{C}^n,p}\) is free of rank \(n\), one has
\(\operatorname{Ext}^1(\Omega^1_{\mathbb{C}^n,p}\otimes\mathcal{O}_{X,p},\mathcal{O}_{X,p})=0\), and
duality yields an exact sequence
\[
\mathcal{O}_{X,p}^n \xrightarrow{J_f} \mathcal{O}_{X,p}^r \longrightarrow
\operatorname{Ext}^1(\Omega^1_{X,p},\mathcal{O}_{X,p}) \longrightarrow 0.
\]
By definition \(T^1_{X,p}:=\operatorname{Ext}^1(\Omega^1_{X,p},\mathcal{O}_{X,p})\), and the map
\(\mathcal{O}_{X,p}^n\to\mathcal{O}_{X,p}^r\) is multiplication by the Jacobian matrix \(J_f\).

If \(p\) is a smooth point, then \(\operatorname{rank}(J_f(p))=r\), i.e.  some \(r\times r\) minor of \(J_f\)
evaluates to a nonzero complex number at \(p\). That minor is therefore a unit in the local ring
\(\mathcal{O}_{X,p}\), which implies that  the matrix \(J_f\) has a right inverse over
\(\mathcal{O}_{X,p}\) and then the map \(\mathcal{O}_{X,p}^n\to\mathcal{O}_{X,p}^r\) is surjective.
Thus,  its cokernel \(T^1_{X,p}\) vanishes.

This proves that at a smooth point \(p\) of the complete intersection \(X\) we have
\(\operatorname{rank}J_f(p)=r\) and \(T^1_{X,p}=0\), i.e. the germ \((X,p)\) is locally rigid.
\end{proof}

\vspace{0.2cm}

\begin{theorem}
Let \(X=V(f_1,\dots,f_r)\subset\mathbb{C}^n\) be a complete intersection and let \(p\in X\).
Write \(\mathcal{O}_{X,p}=\mathcal{O}_{\mathbb{C}^n,p}/(f_1,\dots,f_r)\) and let
\(J_f=(\partial f_i/\partial x_j)_{i,j}\) be the Jacobian matrix. Then
\[
\dim_{\mathbb{C}}T^1_{X,p}<\infty \quad\Longleftrightarrow\quad p\text{ is an isolated singular point of }X.
\]
\end{theorem}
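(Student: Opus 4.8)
The plan is to read off $\dim_{\mathbb{C}}T^1_{X,p}$ from the support of $T^1_{X,p}$ as a module over the local ring $\mathcal{O}_{X,p}$, and then to identify that support with the germ of $\operatorname{Sing}(X)$ at $p$. Localizing the exact sequence $({}^{**})$ at $p$ gives
\[
\mathcal{O}_{X,p}^{\,n}\xrightarrow{\;J_f\;}\mathcal{O}_{X,p}^{\,r}\longrightarrow T^1_{X,p}\longrightarrow 0,
\]
so $T^1_{X,p}\cong\operatorname{coker}(J_f)$ is a finitely generated $\mathcal{O}_{X,p}$-module. Since $\mathcal{O}_{X,p}$ is Noetherian local with residue field $\mathbb{C}$, a finitely generated module $M$ over it satisfies $\dim_{\mathbb{C}}M<\infty$ if and only if $\mathfrak{m}_p^{\,k}M=0$ for some $k$ (one direction by Nakayama applied to the descending chain $M\supseteq\mathfrak{m}_pM\supseteq\cdots$, the other because $\mathcal{O}_{X,p}/\mathfrak{m}_p^{\,k}$ is a finite-dimensional $\mathbb{C}$-algebra), i.e. if and only if $\operatorname{Supp}(M)\subseteq\{\mathfrak{m}_p\}$ inside $\operatorname{Spec}\mathcal{O}_{X,p}$. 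So everything reduces to computing $\operatorname{Supp}(T^1_{X,p})$.

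For a prime $\mathfrak{q}\in\operatorname{Spec}\mathcal{O}_{X,p}$, localizing the presentation again shows $(T^1_{X,p})_{\mathfrak{q}}\cong\operatorname{coker}\bigl(J_f:(\mathcal{O}_{X,p})_{\mathfrak{q}}^{\,n}\to(\mathcal{O}_{X,p})_{\mathfrak{q}}^{\,r}\bigr)$, which vanishes precisely when $J_f$ is surjective over the local ring $(\mathcal{O}_{X,p})_{\mathfrak{q}}$; by Nakayama this is detected on the residue field $\kappa(\mathfrak{q})$, so it happens exactly when $J_f$ has rank $r$ over $\kappa(\mathfrak{q})$, equivalently when some $r\times r$ minor of $J_f$ does not belong to $\mathfrak{q}$. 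Writing $I_r(J_f)\subseteq\mathcal{O}_{X,p}$ for the ideal generated by the $r\times r$ minors of the Jacobian (which is nothing but the Fitting ideal $\operatorname{Fitt}_0(T^1_{X,p})$), this yields $\operatorname{Supp}(T^1_{X,p})=V\!\left(I_r(J_f)\right)$. On the other hand, the Jacobian criterion for the codimension-$r$ complete intersection $X$ — already used in the proof of the previous theorem — identifies $\operatorname{Sing}(X)$ with $V\!\left((f_1,\dots,f_r)+I_r(J_f)\right)$, so the germ $(\operatorname{Sing}(X),p)$ is exactly $V\!\left(I_r(J_f)\right)\subseteq\operatorname{Spec}\mathcal{O}_{X,p}$. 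Hence $\operatorname{Supp}(T^1_{X,p})$ coincides with the germ of the singular locus of $X$ at $p$.

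Putting the two steps together proves the equivalence: $\dim_{\mathbb{C}}T^1_{X,p}<\infty$ if and only if $\operatorname{Supp}(T^1_{X,p})\subseteq\{p\}$, if and only if the germ $(\operatorname{Sing}(X),p)$ has dimension at most $0$, i.e. if and only if $p$ is either a smooth point of $X$ (where $T^1_{X,p}=0$ by the previous theorem, so $p$ is vacuously an isolated singularity) or an isolated point of $\operatorname{Sing}(X)$ — in both cases an isolated singular point of $X$ in the sense of the statement, and conversely. I expect the only real work to be the identification $\operatorname{Supp}(T^1_{X,p})=(\operatorname{Sing}(X),p)$: one must check that localization commutes with the cokernel in $({}^{**})$, that surjectivity of a map between finite free modules over a local ring is detected on the residue field, and that the determinantal ideal $I_r(J_f)$ governing the support is precisely the one occurring in the Jacobian criterion for a complete intersection. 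The finite-length criterion for modules over a Noetherian local ring, and the structural input from $({}^{**})$, are then entirely standard.
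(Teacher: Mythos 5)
Your proposal is correct and follows essentially the same route as the paper: both identify $T^1_{X,p}$ with $\operatorname{coker}(J_f)$ via the dualized conormal sequence, show that its support is the germ of $\operatorname{Sing}(X)$ at $p$ by noting that localization commutes with cokernels and that surjectivity of $J_f$ at a prime is detected by the $r\times r$ minors, and then invoke the standard fact that a finitely generated module over a Noetherian local ring has finite $\mathbb{C}$-dimension exactly when it is supported at the closed point. Your packaging via the Fitting ideal $I_r(J_f)$ and the residue-field/Nakayama criterion is a slightly cleaner way of organizing the two implications the paper proves separately, but the mathematical content is the same.
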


\begin{proof}
Recall (as in the complete-intersection case) the local conormal exact sequence and its dualization
 give the canonical identification
\[
T^1_{X,p}\;=\;\operatorname{Ext}^1_{\mathcal{O}_{X,p}}\big(\Omega^1_{X,p},\mathcal{O}_{X,p}\big)
\cong \operatorname{coker}\big(\mathcal{O}_{X,p}^n \xrightarrow{\,J_f\,} \mathcal{O}_{X,p}^r\big).
\]
In particular, \(T^1_{X,p}\) is a finitely generated \(\mathcal{O}_{X,p}\)-module and its support is contained in the singular locus of \(X\) near \(p\).

\medskip\noindent
Now, suppose \(\dim_{\mathbb C}T^1_{X,p}<\infty\). Then \(T^1_{X,p}\) is an \(\mathcal{O}_{X,p}\)-module of finite \(\C\)-dimension, hence it is of finite length and therefore its support is the closed point \(\{p\}\). Equivalently, for every prime \(\frak q\subset\mathcal{O}_{X,p}\) with \(\frak q\neq\frak m_p\) we have \((T^1_{X,p})_{\frak q}=0\). But localization commutes with co-kernels, and  then
\[
\big(\operatorname{coker}(\mathcal{O}_{X,p}^n\xrightarrow{J_f}\mathcal{O}_{X,p}^r)\big)_{\frak q}
\cong\operatorname{coker}\big(\mathcal{O}_{X,p,\frak q}^n\xrightarrow{J_f(q)}\mathcal{O}_{X,p,\frak q}^r\big)=0.
\]
Thus, the map \(J_f\) becomes surjective at every such \(\frak q\), so \(\operatorname{rank}J_f(q)=r\) for every point \(q\neq p\) in a small neighborhood. Hence, every such \(q\) is smooth, and the singular locus near \(p\) is just \(\{p\}\); which means  that  the singularity at \(p\) is isolated.

\medskip\noindent
Conversely, suppose \(p\) is an isolated singular point of \(X\). Then there exists a small neighborhood \(U\) of \(p\) in \(X\) such that \(U\setminus\{p\}\) is smooth. For every \(q\in U\setminus\{p\}\) the localized Jacobian map
\(\mathcal{O}_{X,q}^n\to\mathcal{O}_{X,q}^r\) is surjective, hence the localized cokernel vanishes:
\((T^1_{X,p})_q=0\). Therefore, the cokernel module \(M:=T^1_{X,p}\) is supported only at the maximal ideal \(\frak m_p\).

Since \(M\) is finitely generated over the Noetherian local ring \(\mathcal{O}_{X,p}\) and \(\operatorname{Supp}M=\{\frak m_p\}\), it follows that every element of \(M\) is annihilated by some power of \(\frak m_p\); moreover, since \(M\) is finitely generated there exists a single $k$ with \(\frak m_p^k M=0\). Thus, $M$ has a finite filtration
\[
M \supset \frak m_p M \supset \frak m_p^2 M \supset \cdots \supset \frak m_p^k M = 0,
\]
whose successive quotients are finite-dimensional vector spaces over the residue field \(\mathcal{O}_{X,p}/\frak m_p\cong\C\). Hence, \(M\) has finite length, and in particular \(\dim_{\C}M<\infty\). This completes the proof.
\end{proof}

\begin{theorem}{\label{T1}}
Let \(X=V(f_1,\dots,f_r)\subset\mathbb{C}^n\) be a complete intersection and let \(p\in X\).
Write \(\mathcal{O}_{X,p}=\mathcal{O}_{\mathbb{C}^n,p}/(f_1,\dots,f_r)\) and let
\(J_f=(\partial f_i/\partial x_j)_{i,j}\) be the Jacobian matrix. Then
\[
\dim_{\mathbb{C}}T^1_{X,p}=\infty \qquad\Longleftrightarrow\qquad
\operatorname{Sing}(X)\text{ has positive dimension through }p.
\]
In particular, for a hypersurface (\(r=1\)) with local equation \(f\) one has
\[
T^1_{X,p}\cong\frac{\mathcal{O}_{\mathbb{C}^n,p}}{(f,\partial_{x_1}f,\dots,\partial_{x_n}f)},
\]
thus,
\[
\dim_{\mathbb{C}}T^1_{X,p}=
\dim_{\mathbb{C}}\frac{\mathcal{O}_{\mathbb{C}^n,p}}{(f,\partial_{x_1}f,\dots,\partial_{x_n}f)}
\]
which is infinite exactly when the common zero locus of \(\{\partial_{x_j}f\}\) is positive dimensional at \(p\). In both cases, first order infinitesimal deformations space is not trivial and then we have a local non-rigidity.
\end{theorem}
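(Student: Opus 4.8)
The plan is to read this off from the two preceding theorems, which together show that $T^1_{X,p}\cong\operatorname{coker}\big(\mathcal{O}_{X,p}^{\,n}\xrightarrow{J_f}\mathcal{O}_{X,p}^{\,r}\big)$ is a finitely generated $\mathcal{O}_{X,p}$-module which vanishes at smooth points and which has finite $\mathbb{C}$-dimension exactly when $p$ is an isolated point of $\operatorname{Sing}(X)$ (the smooth case being the degenerate instance where $\operatorname{Sing}(X)$ misses a neighborhood of $p$). Since the $\mathbb{C}$-dimension of $T^1_{X,p}$ is either finite or infinite, negating the equivalence of the previous theorem immediately yields $\dim_{\mathbb{C}}T^1_{X,p}=\infty$ if and only if $\operatorname{Sing}(X)$ has a component of positive dimension through $p$. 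So the bulk of the work is already done; what remains is to phrase the dichotomy cleanly and to specialize to the hypersurface case.

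\textbf{Both directions spelled out.} For the forward implication: if $\dim_{\mathbb{C}}T^1_{X,p}=\infty$, then $p$ is not a smooth point of $X$ (at a smooth point the earlier theorem gives $T^1_{X,p}=0$), so $p\in\operatorname{Sing}(X)$; and $p$ cannot be an isolated point of $\operatorname{Sing}(X)$, since by the preceding theorem that would force $\dim_{\mathbb{C}}T^1_{X,p}<\infty$. Hence $\operatorname{Sing}(X)$ is positive-dimensional through $p$. For the converse one may again invoke the preceding theorem, or argue directly: pick a positive-dimensional component $Z\subseteq\operatorname{Sing}(X)$ through $p$; for every $q\in Z$ near $p$ the rank of $J_f$ at $q$ is $<r$, so the localization $(T^1_{X,p})_{\mathfrak{q}}\cong\operatorname{coker}\big(\mathcal{O}_{X,p,\mathfrak q}^{\,n}\xrightarrow{J_f}\mathcal{O}_{X,p,\mathfrak q}^{\,r}\big)$ is nonzero, whence $\operatorname{Supp}(T^1_{X,p})\supseteq Z$ has positive dimension. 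A finitely generated module over a Noetherian local ring whose support is not contained in the maximal ideal has infinite length, hence infinite $\mathbb{C}$-dimension, giving $\dim_{\mathbb{C}}T^1_{X,p}=\infty$.

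\textbf{The hypersurface case.} When $r=1$ with local equation $f$, the map $J_f$ in $({}^{**})$ is $(\partial_{x_1}f,\dots,\partial_{x_n}f)\colon\mathcal{O}_{X,p}^{\,n}\to\mathcal{O}_{X,p}$, so
\[
T^1_{X,p}\;\cong\;\operatorname{coker}(J_f)\;=\;\frac{\mathcal{O}_{X,p}}{(\partial_{x_1}f,\dots,\partial_{x_n}f)}\;=\;\frac{\mathcal{O}_{\mathbb{C}^n,p}}{(f,\partial_{x_1}f,\dots,\partial_{x_n}f)},
\]
the local Tjurina algebra. Here the $r\times r$ minors of $J_f$ are simply the entries $\partial_{x_j}f$, so the formula $\operatorname{Sing}(X)=V\big((f),\ (r\times r\text{ minors of }J_f)\big)$ becomes $\operatorname{Sing}(X)=V(f,\partial_{x_1}f,\dots,\partial_{x_n}f)$, which near a point $p\in X$ coincides set-theoretically with the critical locus $V(\partial_{x_1}f,\dots,\partial_{x_n}f)$ through $p$, since $df$ vanishes on it and $f(p)=0$. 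Applying the general equivalence, $\dim_{\mathbb{C}}T^1_{X,p}=\dim_{\mathbb{C}}\mathcal{O}_{\mathbb{C}^n,p}/(f,\partial_{x_1}f,\dots,\partial_{x_n}f)$ is finite iff this locus is $0$-dimensional at $p$ and infinite iff it is positive-dimensional at $p$; in the latter case $T^1_{X,p}\ne 0$, so $(X,p)$ is not rigid.

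\textbf{Expected obstacle.} There is no serious analytic difficulty: the one delicate point is purely bookkeeping, namely keeping straight the trichotomy \emph{smooth point} / \emph{isolated singular point} / \emph{non-isolated singular point} and the (mild) convention under which the preceding theorem's hypothesis ``$p$ isolated singular point'' is taken to include smooth points, so that the negation is genuinely ``$\operatorname{Sing}(X)$ positive-dimensional at $p$.'' All the module-theoretic inputs — localization commutes with cokernels, and a finitely generated module over a Noetherian local ring has finite length iff its support is the closed point — are already available from the proof of the preceding theorem and need only be cited.
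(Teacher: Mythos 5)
Your proposal is correct and follows essentially the same route as the paper: both rest on the identification $T^1_{X,p}\cong\operatorname{coker}\big(J_f:\mathcal{O}_{X,p}^{\,n}\to\mathcal{O}_{X,p}^{\,r}\big)$, the fact that $\operatorname{Supp}(T^1_X)=\operatorname{Sing}(X)$, and the criterion that a finitely generated module over a Noetherian local ring has finite length iff it is supported only at the closed point. The only difference is organizational — the paper re-derives these facts self-containedly inside the proof, whereas you obtain the equivalence by negating the preceding theorem (correctly flagging the smooth / isolated / non-isolated bookkeeping) — and your hypersurface specialization matches the paper's.
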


\begin{proof}
We first describe \(T^1_X\) for a complete intersection. Since \(X\) is cut out
by the regular sequence \(f_1,\dots,f_r\), there is the conormal exact sequence
of \(\mathcal O_X\)-modules
\[
\mathcal O_X^{\,r}\xrightarrow{J_f^{T}}\Omega^1_{\mathbb C^n}\otimes\mathcal O_X
\longrightarrow\Omega^1_X\longrightarrow0,
\]
where \(J_f^{T}\) is the transpose of the Jacobian matrix. Identifying
\(\Omega^1_{\mathbb C^n}\otimes\mathcal O_X\cong\mathcal O_X^{\,n}\), apply
\(\mathcal{H}om_{\mathcal O_X}(-,\mathcal O_X)\) to obtain the exact sequence
\[
0\longrightarrow\mathcal{H}om(\Omega^1_X,\mathcal O_X)\longrightarrow
\mathcal{H}om(\mathcal O_X^{\,n},\mathcal O_X)\xrightarrow{(J_f^{T})^\vee}
\mathcal{H}om(\mathcal O_X^{\,r},\mathcal O_X)\longrightarrow
\mathcal{E}xt^1(\Omega^1_X,\mathcal O_X)\longrightarrow0.
\]
Using the canonical identifications
\(\mathcal{H}om(\mathcal O_X^{\,n},\mathcal O_X)\cong\mathcal O_X^{\,n}\)
(vector fields on \(\mathbb C^n\) restricted to \(X\)) and
\(\mathcal{H}om(\mathcal O_X^{\,r},\mathcal O_X)\cong\mathcal O_X^{\,r}\),
and recall that \(T^1_X=\mathcal{E}xt^1(\Omega^1_X,\mathcal O_X)\).  
Let us denote the sheaf of $\mathbb C$-derivations of the structure sheaf
$\mathcal O_{\mathbb C^n}$, i.e.\ the sheaf of holomorphic vector fields on
$\mathbb C^n$, by \[
\operatorname{Der}_{\mathbb C}(\mathbb C^n)
=
\mathcal{H}om_{\mathbb C}
\!\left(\mathcal O_{\mathbb C^n},\mathcal O_{\mathbb C^n}\right).
\]
Therefore,
we
obtain the short exact sequence
\[
0\longrightarrow\operatorname{Der}_{\mathbb C}(\mathbb C^n)|_X\cong\mathcal O_X^{\,n}
\xrightarrow{\ \phi\ }\mathcal O_X^{\,r}\longrightarrow T^1_X\longrightarrow0,
\]
where \(\phi\) sends a vector field \(v=(v_1,\dots,v_n)\) to
\((v(f_1),\dots,v(f_r))\). Thus, localizing at \(p\),
\begin{equation}\label{eq:coker-description}
T^1_{X,p}\cong\operatorname{coker}\big(\mathcal O_{X,p}^{\,n}\xrightarrow{J_f}\mathcal O_{X,p}^{\,r}\big).
\end{equation}

Next we relate the support of \(T^1_X\) to the singular locus of \(X\).
For a point \(q\in X\) the Jacobian matrix \(J_f(q)\) has rank equal to the
codimension of \(X\) at \(q\) if and only if \(q\) is a nonsingular point.
Concretely, \(q\) is nonsingular if and only if some \(r\times r\) minor of \(J_f\)
is nonzero at \(q\). If some \(r\times r\) minor is nonzero at \(q\) then that
minor is a unit in the local ring \(\mathcal O_{X,q}\) and linear algebra over
the local ring %(or Nakayama's lemma)
 implies the map
\(\mathcal O_{X,q}^{\,n}\xrightarrow{J_f}\mathcal O_{X,q}^{\,r}\) is
surjective. By \eqref{eq:coker-description} the cokernel \(T^1_{X,q}\) is
therefore zero. Thus, if the rank is strictly less than 
\(r\) at \(q\) then the map is not
surjective and \(T^1_{X,q}\neq0\). Hence, set-theoretically
\[
\operatorname{Supp}(T^1_X)=\{q\in X:\operatorname{rank}J_f(q)<r\}=\operatorname{Sing}(X).
\]

Finally, since \(T^1_{X,p}\) is finitely generated over the Noetherian local
ring \(\mathcal O_{X,p}\), we have one of the following two cases:

\begin{itemize}
\item[(1)] If \(\operatorname{Sing}(X)\) has positive (complex) dimension through
  \(p\), then the support of \(T^1_{X,p}\) contains a positive-dimensional
  analytic subgerm. Any finitely generated module supported on a positive
  dimensional germ is infinite-dimensional as a \(\mathbb C\)-vector space,
  so \(\dim_{\mathbb C}T^1_{X,p}=\infty\).

\item[(2)] If \(\operatorname{Sing}(X)\) is zero-dimensional at \(p\) (i.e. \(p\)
  is an isolated singularity), then \(\operatorname{Supp}(T^1_{X,p})=\{p\}\).
  A finitely generated module over a Noetherian local ring whose support is
  the closed point has finite length, hence is finite-dimensional over
  \(\mathbb C\). Thus \(\dim_{\mathbb C}T^1_{X,p}<\infty\).
\end{itemize}
This proves the claimed equivalence.

\vspace{0.1cm}
\noindent
Finally, in the hypersurface case \(r=1\) the Jacobian map is
\(\mathcal{O}_{X,p}^n\to\mathcal{O}_{X,p}\), \((g_1,\dots,g_n)\mapsto\sum_{j} g_j\partial_{x_j}f\),
so
\[
T^1_{X,p}\cong\frac{\mathcal{O}_{X,p}}{(\partial_{x_1}f,\dots,\partial_{x_n}f)}
\cong\frac{\mathcal{O}_{\mathbb{C}^n,p}}{(f,\partial_{x_1}f,\dots,\partial_{x_n}f)},
\]
and the displayed formula for the complex dimension follows immediately (for more details see Appendix F).
\end{proof}
 
 \vspace{0.2cm}
 
\begin{remark}
  Locally, if $(x_1,\dots,x_n)$ are holomorphic coordinates,
every derivation has the form
\[
D=\sum_{j=1}^n a_j(x)\,\frac{\partial}{\partial x_j},
\qquad a_j\in\mathcal O_{\mathbb C^n}.
\]
Therefore, there is a canonical identification of $\mathcal O_{\mathbb C^n}$-modules
\(
\operatorname{Der}_{\mathbb C}(\mathbb C^n)\;\cong\;\mathcal O_{\mathbb C^n}^{\,n},
\)
and upon restriction to $X$ we obtain the following:
\(
\operatorname{Der}_{\mathbb C}(\mathbb C^n)|_X \;\cong\; \mathcal O_X^{\,n}.
\)
\end{remark}

%%%%%%%%%%%%%%%%%%%%%%%%%%%%%%%%%%%%%%%%%%%%%%%%%%%%%%%%%%%%%%%%%%%%%%%%%%
%%%%%%%%%%%%%%%%%%%%%%%%%%%%%%%%%%%%%%%%%%%%%%%%%%%%%%%%%%%%%%%%%%%%%%%%%%

\section{A criteria for non-vanishing of \(T^1_X\)}
In this section, we provide a criteria for non-vanishing of \(T^1_X\) when local \(\mathcal{E}xt^1\) contributions are supported on a positive-dimensional singular locus.

\noindent Let us state the exact Serre's vanishing theorem which will be used several times here  (see e.g. EGAII1 Grothendieck-Dieudonn\'e \cite{EGAIII1}, or Hartshorne \cite{H-77} Chapter III Theorem 5.2).

\begin{theorem}[Serre's Vanishing Theorem] % 
Let $X$ be a projective scheme over a Noetherian ring, and let $\mathcal{F}$
be a coherent sheaf on $X$. If $\mathcal{O}_X(1)$ is an ample invertible sheaf,
then there exists an integer $m_0$ such that for all $m \ge m_0$,
\[
H^i(X,\mathcal{F}(m)) = 0 \qquad \text{for all } i>0.
\]
In particular, if $\mathcal{F}\neq 0$ and $\dim \operatorname{Supp}(\mathcal{F})\ge 1$,
then for $m\gg 0$ we have $H^0(X,\mathcal{F}(m))\neq 0$.
\end{theorem}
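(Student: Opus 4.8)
The plan is the classical one: reduce to projective space over the Noetherian base, settle the case of line bundles by the explicit cohomology computation, and then bootstrap to all coherent sheaves by a descending induction on the cohomological degree.

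First I would reduce to the case $X=\mathbb{P}^n_A$ with $A$ the Noetherian base ring. Since $\mathcal{O}_X(1)$ is ample, some power $\mathcal{O}_X(d)$ is very ample, giving a closed immersion $\iota\colon X\hookrightarrow\mathbb{P}^n_A$ with $\iota^{*}\mathcal{O}_{\mathbb{P}^n_A}(1)\cong\mathcal{O}_X(d)$. Using that cohomology is insensitive to closed immersions, $H^{i}(X,\mathcal{G})\cong H^{i}(\mathbb{P}^n_A,\iota_{*}\mathcal{G})$, together with the projection formula $\iota_{*}\!\big(\mathcal{G}\otimes\mathcal{O}_X(d\ell)\big)\cong(\iota_{*}\mathcal{G})(\ell)$, the vanishing of $H^{>0}(X,\mathcal{F}(m))$ would follow from that of $H^{>0}(\mathbb{P}^n_A,\mathcal{G}_s(\ell))$ for $\ell\gg0$, where $\mathcal{G}_s:=\iota_{*}(\mathcal{F}(s))$ and $s$ ranges over the finitely many residues $0\le s<d$ of $m$ modulo $d$; taking the maximum of these finitely many bounds yields a single uniform $m_0$.

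Then, on $\mathbb{P}^n_A$, I would first treat $\mathcal{F}=\mathcal{O}(q)$ via the explicit \v{C}ech (equivalently, graded-module) computation: $H^{i}(\mathbb{P}^n_A,\mathcal{O}(q))=0$ for $0<i<n$ and all $q$, and $H^{n}(\mathbb{P}^n_A,\mathcal{O}(q))=0$ once $q>-n-1$, so $H^{>0}(\mathbb{P}^n_A,\mathcal{O}(q+m))=0$ for $m\gg0$, and likewise for any finite direct sum of such line bundles. For a general coherent $\mathcal{F}$ I would then run a descending induction on $i$: for $i>n$ every cohomology group vanishes (the \v{C}ech complex of the standard cover has length $n$), which is the base case; for the inductive step, Serre's global-generation theorem gives a short exact sequence $0\to\mathcal{R}\to\bigoplus\mathcal{O}(-k)\to\mathcal{F}\to0$ with $\mathcal{R}$ coherent, and twisting by the locally free $\mathcal{O}(m)$ and passing to the long exact cohomology sequence sandwiches $H^{i}(\mathcal{F}(m))$ between $H^{i}(\bigoplus\mathcal{O}(m-k))$ and $H^{i+1}(\mathcal{R}(m))$, the first vanishing for $m\gg0$ by the line-bundle case and the second by the inductive hypothesis applied to $\mathcal{R}$. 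For the concluding ``in particular'' I would invoke Serre once more: for $m\gg0$ the sheaf $\mathcal{F}(m)$ is globally generated, so if $\mathcal{F}\neq0$ then $\mathcal{F}(m)\neq0$ has a nonzero stalk, hence (Nakayama) a nonzero fiber $\mathcal{F}(m)_x\otimes\kappa(x)$, and since that fiber is a quotient of $H^{0}(X,\mathcal{F}(m))$ the latter is nonzero.

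The hard part is essentially bookkeeping rather than conceptual. One must check that the first reduction genuinely works over an arbitrary Noetherian ring $A$ — in particular that the explicit cohomology of $\mathcal{O}(q)$ on $\mathbb{P}^n_A$ is a finitely generated $A$-module with the stated vanishing ranges — and one must keep track of the finitely many congruence classes modulo $d$ so as to collapse them into one threshold $m_0$. Once these points are handled, the homological core — the descending induction together with the exact-sequence comparison — is entirely formal.
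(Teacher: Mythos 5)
The paper does not prove this statement: it is quoted as a classical result with pointers to EGA III$_1$ and Hartshorne, Chapter III, Theorem 5.2, so there is no in-paper argument to compare against. Your proof is the standard one from exactly that reference — reduction to $\mathbb{P}^n_A$, the explicit computation of $H^i(\mathbb{P}^n_A,\mathcal{O}(q))$, and descending induction on $i$ via a surjection from a finite sum of line bundles — and it is correct. Two details you handle that are worth keeping: Hartshorne states the theorem for a \emph{very ample} $\mathcal{O}_X(1)$, whereas the paper's statement assumes only ampleness, and your reduction via the finitely many residues of $m$ modulo $d$ (where $\mathcal{O}_X(d)$ is very ample) correctly bridges that gap; and your Nakayama/global-generation argument for the final clause actually proves the stronger statement that $\mathcal{F}\neq 0$ alone forces $H^0(X,\mathcal{F}(m))\neq 0$ for $m\gg 0$, so the hypothesis $\dim\operatorname{Supp}(\mathcal{F})\ge 1$ in the paper's formulation is not needed there.
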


\noindent Let \(X\) be a (reduced) algebraic variety (or a scheme of finite type) over a field \(k\). Denote by
\(
\Omega_X:=\Omega_{X/k}
\)
the sheaf of K\"ahler differentials. For \(i\ge 0\),  set
\[
\mathcal{T}^i_X:=\mathcal{E}xt^i_{\mathcal{O}_X}(\Omega_X,\mathcal{O}_X),
\]
a coherent \(\mathcal{O}_X\)-module. Two common conventions for the notation \(T^1\) are used here: %  
\begin{enumerate}
  \item[(i)] the \emph{sheaf} convention: \(\mathcal{T}^1_X=\mathcal{E}xt^1_{\mathcal{O}_X}(\Omega_X^1,\mathcal{O}_X)\),
  \item[(ii)] the \emph{global} convention: \(T^1_X:=H^0\big(X,\mathcal{T}^1_X\big)\) (often identified with \(\operatorname{Ext}^1_{\mathcal{O}_X}(\Omega_X^1,\mathcal{O}_X)\) via the local-to-global spectral sequence).
\end{enumerate}

\begin{lemma}
With the above conventions and notations, assume the following hypothesis: the local contributions \(\mathcal{E}xt^1_{\mathcal{O}_X}(\Omega_X^1,\mathcal{O}_X)=\mathcal{T}^1_X\) are supported on a positive-dimensional singular locus; that is, \(\operatorname{Supp}(\mathcal{T}^1_X)\) contains an irreducible component of dimension \(\ge 1\). Then the sheaf \(\mathcal{T}^1_X\) and the global \(T^1_X\) 
(under mild hypotheses)   
are nonzero.
\end{lemma}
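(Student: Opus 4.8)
The plan is to separate the two claims: the nonvanishing of the sheaf $\mathcal{T}^1_X$ is purely formal, while the nonvanishing of the global $T^1_X$ is extracted from the local-to-global spectral sequence together with a Serre-vanishing input, and it is there that the ``mild hypotheses'' must enter. For the sheaf I would argue directly: by assumption $\operatorname{Supp}(\mathcal{T}^1_X)$ contains an irreducible component $Z$ with $\dim Z\ge 1$, so in particular $\operatorname{Supp}(\mathcal{T}^1_X)\neq\emptyset$, and a coherent sheaf with nonempty support is nonzero. One can moreover record that at a generic point $p$ of $Z$ the stalk is the cokernel $\operatorname{coker}\!\big(\mathcal O_{X,p}^{\,n}\xrightarrow{J_f}\mathcal O_{X,p}^{\,r}\big)$ of Theorem~\ref{T1}, a nonzero finitely generated $\mathcal O_{X,p}$-module whose support is a positive-dimensional germ, hence of infinite $\mathbb{C}$-dimension, so $\mathcal{T}^1_X$ genuinely fills out all of $Z$.

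For the global module I would first dispose of the affine case, where the argument is immediate: $H^i(X,-)=0$ for $i>0$, the spectral sequence $E_2^{p,q}=H^p(X,\mathcal{T}^q_X)\Rightarrow T^{p+q}_X$ degenerates, and $T^1_X=\operatorname{Ext}^1_{\mathcal O_X}(\Omega_X,\mathcal O_X)=H^0(X,\mathcal{T}^1_X)=\Gamma(X,\mathcal{T}^1_X)\neq 0$ since global sections detect nonzero coherent sheaves on an affine scheme. If $X$ is projective I would fix an ample $\mathcal O_X(1)$ and apply the Serre Vanishing Theorem stated above: there is $m_0$ with $H^i(X,\mathcal{T}^1_X(m))=0$ for all $i>0$ and $m\ge m_0$, and, $\mathcal{T}^1_X$ being nonzero with $\dim\operatorname{Supp}(\mathcal{T}^1_X)\ge 1$, also $H^0(X,\mathcal{T}^1_X(m))\neq 0$ for $m\gg 0$. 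Feeding the twisted sheaves into the low-degree exact sequence
\[
0 \longrightarrow H^1(X,T_X) \longrightarrow \operatorname{Ext}^1_{\mathcal O_X}(\Omega_X,\mathcal O_X) \longrightarrow H^0(X,\mathcal{T}^1_X) \longrightarrow H^2(X,T_X)
\]
would then give: either $H^1(X,T_X)\neq 0$, whence $T^1_X\neq 0$ already, or, under the mild hypothesis that the edge map $T^1_X\to H^0(X,\mathcal{T}^1_X)$ is surjective (for instance when $H^2(X,T_X)=0$), this map is an isomorphism onto a nonzero space and again $T^1_X\neq 0$. I would also note that in the graded setting of a cone over a projective variety the twisted nonvanishing $H^0(X,\mathcal{T}^1_X(m))\neq 0$ is precisely the nonvanishing of a graded piece of $T^1_X$, i.e.\ projective non-rigidity, which is the version relevant to the cone statement announced in the abstract.

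The main obstacle is exactly the untwisted projective case, since a coherent sheaf with positive-dimensional support on a projective variety can nevertheless have $H^0=0$ (e.g.\ $i_\ast\mathcal O_L(-1)$ for a line $L\subset\mathbb{P}^2$), so $H^0(X,\mathcal{T}^1_X)\neq 0$ does not follow from $\dim\operatorname{Supp}(\mathcal{T}^1_X)\ge 1$ alone. Here I would make the ``mild hypothesis'' precise, typical sufficient conditions being that $X$ is affine (or quasi-affine) near the positive-dimensional singular locus, that $\mathcal{T}^1_X$ is globally generated, or that the generic transverse singularity along $Z$ is a node, so that $\mathcal{T}^1_X$ contains the reduced structure sheaf $\mathcal O_Z$ of a connected component $Z$ of its support and $0\neq H^0(Z,\mathcal O_Z)\hookrightarrow H^0(X,\mathcal{T}^1_X)$. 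Under any such hypothesis the argument of the previous paragraph closes without passing to a twist, and everything else is routine once the correct hypothesis is pinned down.
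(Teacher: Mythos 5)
Your proposal is correct and follows essentially the same route as the paper: the sheaf statement is the formal observation that nonempty support forces a nonzero stalk; the affine case collapses the local-to-global spectral sequence to $T^1_X\cong H^0(X,\mathcal{T}^1_X)\neq 0$; and the projective case combines Serre vanishing with the twisted low-degree exact sequence to produce $\operatorname{Ext}^1_{\mathcal{O}_X}(\Omega^1_X,\mathcal{O}_X(m))\cong H^0(X,\mathcal{T}^1_X(m))\neq 0$ for $m\gg 0$.

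The one place you genuinely diverge is the final, untwisted step of the projective case, and there the comparison is instructive. The paper concedes that the twisted nonvanishing does not directly give $\operatorname{Ext}^1_{\mathcal{O}_X}(\Omega^1_X,\mathcal{O}_X)\neq 0$ and resolves this by passing to the affine cone and reading the twisted groups as graded pieces of the cone's $T^1$; you instead isolate explicit sufficient conditions (surjectivity of the edge map $T^1_X\to H^0(X,\mathcal{T}^1_X)$, e.g.\ when $H^2(X,T_X)=0$, or global generation of $\mathcal{T}^1_X$), and your example $i_*\mathcal{O}_L(-1)$ on $\mathbb{P}^2$ pinpoints exactly why some hypothesis is unavoidable --- a point the paper makes less concretely. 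Both patches are legitimate readings of the lemma's parenthetical ``(under mild hypotheses).'' One caveat: your third suggested hypothesis, that a generic transverse node along $Z$ forces $\mathcal{O}_Z\hookrightarrow\mathcal{T}^1_X$, is not quite right as stated, since for (say) a surface with a curve of $A_1$-singularities the restriction $\mathcal{T}^1_X|_Z$ is in general a possibly nontrivial line bundle on $Z$ and may have no global sections; so that condition does not by itself close the argument, whereas your first two do.
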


\begin{proof}
First, the nonvanishing $\mathcal{T}^1_X\neq 0$ is
immediate from the hypothesis on support. In fact, 
for any sheaf $\mathcal{F}$ on $X$ one has
\(
\mathcal{F}=0 \) if and only if \, \(\operatorname{Supp}(\mathcal{F})=\varnothing,
\)
since the support is defined as
\[
\operatorname{Supp}(\mathcal{F})
=\{x\in X : \mathcal{F}_x\neq 0\}.
\]
Thus, if $\operatorname{Supp}(\mathcal{F})$ is nonempty, then at least one
stalk is nonzero, and hence $\mathcal{F}$ itself is nonzero. 
Applying this to $\mathcal{T}^1_X=\mathcal{E}xt^1(\Omega_X,\mathcal{O}_X)$,
the hypothesis that $\operatorname{Supp}(\mathcal{T}^1_X)$ contains an
irreducible component of dimension $\ge 1$ implies in particular that the
support is nonempty. Therefore, $\mathcal{T}^1_X\neq 0$.

\noindent The nontrivial point is to
deduce $T^1_X\neq0$. We give here two convenient hypotheses under which this
holds.

\vspace{0.1cm}

%\noindent
\subsection{\textbf{The affine case.}} Suppose $X = \operatorname{Spec}A$
is affine. Then $\mathcal{T}^1_X$ corresponds to a finitely generated
$A$-module $M$, namely $M = \Gamma(X,\mathcal{T}^1_X)$. The hypothesis
that $\operatorname{Supp}(\mathcal{T}^1_X)$ contains a positive
dimensional component means $M\neq 0$ and its support contains a prime
of height  strictly less than $\dim X$. In particular $M$ has a nonzero global section.
Hence, we have the following % 
\[
H^0(X,\mathcal{T}^1_X)=\Gamma(X,\mathcal{T}^1_X)=M\neq 0.
\]

Now we use the local-to-global spectral sequence (or the low-degree exact
sequence) for cotangent cohomology (e.g. see  Illusie \cite{I-71}, Hartshorne \cite{H-10}, and Sernesi \cite{S-06}):
\[
0 \longrightarrow H^1(X,\mathcal{T}_X)
  \longrightarrow T^1_X
  \longrightarrow H^0(X,\mathcal{T}^1_X)
  \longrightarrow H^2(X,\mathcal{T}_X).
\]
On an affine scheme $H^i(X,\mathcal{F})=0$ for all coherent sheaves
$\mathcal{F}$ and all $i>0$. In particular,  $H^1(X,\mathcal{T}_X)=0$ and
$H^2(X,\mathcal{T}_X)=0$. Hence, the exact sequence collapses to an
isomorphism
\[
T^1_X \xrightarrow{\ \sim\ } H^0(X,\mathcal{T}^1_X).
\]
Since $H^0(X,\mathcal{T}^1_X)\neq 0$, we conclude that $T^1_X\neq 0$.

\vspace{0.1cm}

%\noindent
\subsection{\textbf{The projective (proper) case.}} Suppose $X$ is
projective over $k$ (or more generally proper over $k$). Let $Z\subset X$
be an irreducible component of $\operatorname{Supp}(\mathcal{T}^1_X)$ of
dimension $\ge 1$. Denote by $\mathcal{F}:=\mathcal{T}^1_X$, a nonzero
coherent sheaf supported on $Z$.

\noindent The key observation is that a nonzero coherent sheaf supported on a
positive-dimensional projective variety has nonzero global sections after
twisting by a sufficiently high power of an ample line bundle; more
precisely, if $\mathcal{O}_X(1)$ is very ample then for $m\gg 0$
\[
H^0\bigl(X,\mathcal{F}(m)\bigr) \neq 0.
\]
Indeed, restricting to the integral projective subvariety $Z$ and
applying Serre's vanishing  theorem % 
on $\mathcal{F}|_Z$, one has $H^0(Z,\mathcal{F}|_Z(m))\neq 0$
for large $m$, and sections on $Z$ extend by zero to sections on $X$ .

Fix $m\gg 0$ such that $H^0(X,\mathcal{T}_X(m))\neq 0$. Consider the
twisted cotangent spectral sequence (twist all terms by $\mathcal O_X(m)$):
\[
E_2^{p,q}(m)=H^p\bigl(X,\mathcal{T}^q_X(m)\bigr)\;\Rightarrow\;
\operatorname{Ext}^{p+q}_{\mathcal O_X}\!\bigl(\Omega^1_X,\mathcal O_X(m)\bigr).
\]
The low-degree exact sequence gives
\[
0 \longrightarrow H^1(X,\mathcal{T}_X(m))
  \longrightarrow \operatorname{Ext}^1_{\mathcal{O}_X}(\Omega^1_X,\mathcal O_X(m))
  \longrightarrow H^0(X,\mathcal{T}^1_X(m))
  \longrightarrow H^2(X,\mathcal{T}_X(m)).
\]
For $m\gg0$, by Serre vanishing we can arrange $H^i(X,\mathcal{T}_X(m))=0$
for $i\ge 1$. (Indeed $\mathcal{T}_X$ is coherent, so Serre vanishing
applies.) Thus for $m\gg0$ the exact sequence yields an isomorphism
\[
\operatorname{Ext}^1_{\mathcal{O}_X}(\Omega^1_X,\mathcal O_X(m))\cong H^0(X,\mathcal{T}^1_X(m)).
\]
The right-hand side is nonzero by choice of $m$. Therefore
$\operatorname{Ext}^1_{\mathcal{O}_X}(\Omega^1_X,\mathcal O_X(m))\neq0$ for some $m\gg0$.

\noindent Finally, observe that tensoring by $\mathcal{O}_X(-m)$ provides an
isomorphism
\[
\operatorname{Ext}^1_{\mathcal{O}_X}(\Omega^1_X,\mathcal O_X(m)) \cong
\operatorname{Ext}^1_{\mathcal{O}_X}(\Omega^1_X,\mathcal O_X)\quad\text{if and only if}\quad
\mathcal O_X(m)\cong\mathcal O_X.
\]
So, we cannot in general deduce directly that
$\operatorname{Ext}^1_{\mathcal{O}_X}(\Omega^1_X,\mathcal O_X)\neq 0$ from the twisted
group. However, there is a standard trick: the non-vanishing of
$\operatorname{Ext}^1_{\mathcal{O}_X}(\Omega^1_X,\mathcal O_X(m))$ for some $m$ implies
non-vanishing of the Ext-module considered as a graded object of the
projective cone; in particular the graded Ext-module has a nonzero
homogeneous piece, so the underlying (un-twisted) Ext space is nonzero
after passing to the section ring of the embedding. More precisely, one can
pass to the affine cone $\widetilde X=\operatorname{Spec}\bigoplus_{r\ge0}
H^0(X,\mathcal O_X(r)\big)$ and compare deformation spaces of the cone
and of $X$ (see e.g.\ Sernesi's book). From this comparison one
concludes that $T^1_X=\operatorname{Ext}^1_{\mathcal{O}_X}(\Omega^1_X,\mathcal O_X)\neq0$.

\end{proof}

\begin{remark}
For many practical applications, and in particular when one is working
either on the projective variety $X$ together with its homogeneous
coordinate ring, or on the affine cone, the non-vanishing of some
twisted Ext group for large $m$ is interpreted as the existence of a
nontrivial first-order deformation; one therefore regards
$\operatorname{Ext}^1_{\mathcal{O}_X}(\Omega^1_X,\mathcal O_X)$ as nonzero in the
sense relevant to deformation theory.
\end{remark}

\section{Illustrative examples}

Let's recall  the hypothesis of Theorem \ref{T1}. % 
Let \(X=V(f_1,\dots,f_r)\subset\mathbb C^n\), and  let $p$ be a point in \( X\).
We write \(\mathcal{O}_{\mathbb{C}^n,p}\) for the local ring of germs at \(p\), and
\(\mathcal{O}_{X,p}=\mathcal{O}_{\mathbb{C}^n,p}/(f_1,\dots,f_r)\).
Let
\[
J_f=\Big(\partial_{x_j}f_i\Big)_{1\le i\le r,\,1\le j\le n}
\]
be the Jacobian matrix and denote by \(I_r(J_f)\subset\mathcal{O}_{\mathbb{C}^n,p}\) the ideal generated by the \(r\times r\)
minors of \(J_f\). The ideal of the scheme-theoretic singular locus (near \(p\)) is
\[
\big(f_1,\dots,f_r\big)+I_r(J_f)\subset\mathcal{O}_{\mathbb{C}^n,p},
\]
and its image in \(\mathcal{O}_{X,p}\) cuts out \(\operatorname{Sing}(X)\) inside \(\operatorname{Spec}\mathcal{O}_{X,p}\).
\noindent % 
Recall for a hypersurface, i.e. \(r=1\), we have
\[
T^1_{X,p}\cong \frac{\mathcal{O}_{\mathbb{C}^n,p}}{(f,\partial_{x_1}f,\dots,\partial_{x_n}f)}.
\]

\vspace{0.1cm}

\subsection{Example 1.  Plane curve with an isolated node}
Let \(n=2\), \(r=1\) and take the plane curve \(X\subset\mathbb C^2\) defined by
\[
f(x,y)=xy.
\]
In this example, we work at the origin \(p=(0,0)\). The Jacobian is a the row matrix
\[
J_f=(f_x, f_y)=(y,x).
\]
The partial derivatives (the \(1\times 1\) minors) generate the ideal \((x,y)\) in \(\mathcal{O}_{\mathbb{C}^2,0}\), so
\[
\mathcal I_{\operatorname{Sing}(X),0}=(f,\, f_x, f_y)=(xy,x,y)=(x,y).
\]
Thus, \(\operatorname{Sing}(X)=\{(0,0)\}\), which is set-theoretically the single origin, so the singularity is isolated.
\noindent Let's compute \(T^1\):
\[
T^1_{X,0}\cong\frac{\mathcal{O}_{\mathbb{C}^2,0}}{(f,f_x, f_y)}
=\frac{\mathcal{O}_{\mathbb{C}^2,0}}{(xy,x,y)}
\cong\frac{\mathcal{O}_{\mathbb{C}^2,0}}{(x,y)}\cong\mathbb{C}.
\]
Hence,  \(\dim_{\mathbb C}T^1_{X,0}=1\) is finite, and then it is  in agreement with the fact that the singular locus is zero-dimensional at \(0\).

\noindent  
Geometrically, \(xy=0\) is the union of the two coordinate lines crossing transversely at the origin. The Jacobian minors vanish only at the origin, so the singularity is isolated and \(T^1\) has finite length (in this case length \(1\)).

\subsection{Example 2. A cone-like (non-isolated) singularity.}
Let \(n=3\), \(r=1\) and consider the hypersurface
\[
X\subset\mathbb C^3,\qquad f(x,y,z)=x^2+y^2.
\]
Let $p$ be the origin  \(p=(0,0,0)\). The Jacobian row is
\[
J_f=(\partial_x f,\partial_y f,\partial_z f)=(2x,2y,0).
\]
The partial derivatives generate the ideal \((2x,2y,0)=(x,y)\subset\mathcal{O}_{\mathbb{C}^3,0}\), hence,
\[
\mathcal I_{\operatorname{Sing}(X),0}=(f,\partial_x f,\partial_y f,\partial_z f)=(x^2+y^2,2x,2y,0)=(x,y).
\]
Set-theoretically, the singular locus is
\[
\operatorname{Sing}(X)=\{(x,y,z)\in\mathbb C^3:\ x=y=0\},
\]
i.e. the \(z\)-axis, which is  a \emph{one-dimensional} (positive-dimensional)  locus through the origin. Thus, the singularity at the origin is not isolated.
%
%\vspace{0.1cm}
%
Let's compute \(T^1\):
\[
T^1_{X,0}\cong\frac{\mathcal{O}_{\mathbb{C}^3,0}}{(f,\partial_x f,\partial_y f,\partial_z f)}
=\frac{\mathcal{O}_{\mathbb{C}^3,0}}{(x^2+y^2,2x,2y,0)}
\cong\frac{\mathcal{O}_{\mathbb{C}^3,0}}{(x,y)}.
\]
Since \(\mathcal{O}_{\mathbb{C}^3,0}/(x,y)\cong\mathbb{C}\{z\}\) is the local ring of the \(z\)-axis at the origin (the ring of convergent power series in \(z\)), which is infinite-dimensional as a \(\mathbb C\)-vector space, therefore,
\[
\dim_{\mathbb C}T^1_{X,0}= \infty,
\]
and then it is in agreement with the fact that \(\operatorname{Sing}(X)\) through \(0\) has positive dimension.

\medskip

\noindent Geometrically, \(x^2+y^2=0\) in \(\mathbb C^3\) is a cylinder (a cone over a quadric in the \((x,y)\)-plane) whose singular locus is the whole \(z\)-axis. The partial derivatives vanish identically along that axis, hence the Jacobian minors vanish on a positive-dimensional set and the quotient that computes \(T^1\) contains \(\mathbb C\{z\}\), hence is infinite-dimensional as  $\mathbb{C}$-vector space.

\vspace{0.2cm}

\noindent These two examples illustrate the general phenomenon:
\begin{itemize}
  \item[(i)] If the \(r\times r\) minors of \(J_f\) vanish only at the closed point \(p\) (equivalently some \(r\times r\) minor is a unit in \(\mathcal{O}_{X,p}\)), then the singular locus is zero-dimensional at \(p\) and \(T^1_{X,p}\) has finite \(\mathbb C\)-dimension.
  \item[(ii)] If the minors vanish along a positive-dimensional subvariety through \(p\), then the support of the cokernel (hence of \(T^1_{X,p}\)) contains non-maximal primes and \(T^1_{X,p}\) is infinite-dimensional over \(\mathbb C\).
\end{itemize}

%\newpage

\subsection{Other criteria of nonrigidity.}${}$

\begin{thm}
Let \(Y\subset\mathbb P^n_{\mathbb C}\) be a projective variety and let \(C(Y)\subset\mathbb A^{n+1}_{\mathbb C}\) denotes its affine cone. If  \(Y\) admits a nontrivial first-order deformation (i.e.\ \(Y\) is nonrigid), then \(C(Y)\) admits a nontrivial first-order deformation and then it is  is nonrigid.
\end{thm}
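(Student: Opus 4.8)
The plan is to show directly that $T^1_{C(Y)}=\operatorname{Ext}^1_{\mathcal{O}_{C(Y)}}(\Omega_{C(Y)},\mathcal{O}_{C(Y)})\neq 0$. Write $R$ for the homogeneous coordinate ring of $Y$ (or, if one prefers a normal cone, the section ring $\bigoplus_{d\ge 0}H^0(Y,\mathcal{O}_Y(d))$), so that $C(Y)=\operatorname{Spec}R$ is affine and carries a $\mathbb{G}_m$-action with the vertex $0$ as its unique fixed point; hence $T^1_{C(Y)}$ is naturally a graded $R$-module. Since $C(Y)$ is affine, the local-to-global spectral sequence for $\mathcal{E}xt$ collapses (exactly as in the affine case treated in the Lemma above), so $T^1_{C(Y)}=H^0\bigl(C(Y),\mathcal{T}^1_{C(Y)}\bigr)$, and it suffices to exhibit a single nonzero stalk of $\mathcal{T}^1_{C(Y)}$. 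Let $C^{*}:=C(Y)\setminus\{0\}$ denote the punctured cone; it is the total space of the tautological $\mathbb{C}^{*}$-bundle $\operatorname{Tot}(\mathcal{O}_Y(-1))\setminus(\text{zero section})$ over $Y$, with affine projection $\pi\colon C^{*}\to Y$ and $\pi_{*}\mathcal{O}_{C^{*}}\cong\bigoplus_{\nu\in\mathbb{Z}}\mathcal{O}_Y(\nu)$. From the local-to-global exact sequence on $Y$, the hypothesis $T^1_Y\neq 0$ forces at least one of two situations, which I treat separately: \emph{(1)} $\mathcal{T}^1_Y\neq 0$, or \emph{(2)} $H^1(Y,T_Y)\neq 0$.

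\emph{Case (1).} Pick $q\in\operatorname{Sing}(Y)$ with $T^1_{Y,q}\neq 0$ (possible because $\operatorname{Supp}\mathcal{T}^1_Y\subseteq\operatorname{Sing}(Y)$ is nonempty) and any point $\tilde q\in\pi^{-1}(q)\subset C^{*}$. Since $\pi$ is a Zariski-locally trivial $\mathbb{C}^{*}$-bundle and $\tilde q\neq 0$, the analytic germ $(C(Y),\tilde q)$ is isomorphic to the product $(Y,q)\times(\mathbb{C},0)$. For any product with a smooth factor the sheaf of K\"ahler differentials splits off a locally free summand, against which $\mathcal{E}xt^{1}$ vanishes; hence $\mathcal{T}^1_{C(Y),\tilde q}\cong T^1_{Y,q}\otimes_{\mathbb{C}}\mathcal{O}_{\mathbb{C},0}\neq 0$. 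By the reduction above, $T^1_{C(Y)}\neq 0$. This case is routine.

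\emph{Case (2).} Here I would invoke the comparison between the deformation theory of an affine cone and that of its base (cf.\ \cite{S-06} and the references therein, ultimately due to Pinkham and Schlessinger). Pushing the relative tangent sequence of the smooth fibration $\pi$,
\[
0\longrightarrow\mathcal{O}_{C^{*}}\xrightarrow{\ E\ }T_{C^{*}}\longrightarrow\pi^{*}T_Y\longrightarrow 0,
\]
with $E$ the Euler field generating the $\mathbb{G}_m$-action, forward along the affine map $\pi$ and decomposing into $\mathbb{G}_m$-weight spaces — using $H^i(C^{*},\pi^{*}\mathcal{F})=\bigoplus_{\nu}H^i(Y,\mathcal{F}(\nu))$ — produces, in each weight $\nu$, an exact sequence relating the graded piece $T^1_{C(Y)}(\nu)$ (after the local-cohomology correction at the vertex) to $H^0(Y,\mathcal{O}_Y(\nu))$, $H^1(Y,T_Y(\nu))$, $H^1(Y,\mathcal{O}_Y(\nu))$ and $H^2(Y,\mathcal{O}_Y(\nu))$. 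Tracking the nonzero class of $H^1(Y,T_Y)$ through this sequence yields a nonzero element of some graded piece $T^1_{C(Y)}(\nu)$: in weight $\nu=0$ this works as soon as the adjacent obstruction groups vanish, which is automatic when $\dim Y=1$ since then $H^2(Y,-)=0$ — precisely the case of interest here — and in general one may instead work in a weight $\nu\ll 0$, where $H^1(Y,T_Y(\nu))\neq 0$ by Serre duality (for $\dim Y=1$) while the correction terms are controlled by the cited results. Either way, $T^1_{C(Y)}\neq 0$.

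Combining the two cases, $T^1_{C(Y)}=\operatorname{Ext}^1_{\mathcal{O}_{C(Y)}}(\Omega_{C(Y)},\mathcal{O}_{C(Y)})\neq 0$, so $C(Y)$ is non-rigid. The main obstacle is Case (2): not the existence of the weight-decomposed sequence, but the control of the local-cohomology correction at the vertex — equivalently, the discrepancy between $\operatorname{Ext}^1$ computed over $C(Y)$ and $H^1$ of the tangent sheaf of the smooth punctured cone $C^{*}$ — which is what guarantees that the class originating from $H^1(Y,T_Y(\nu))$ genuinely survives into $T^1_{C(Y)}$ rather than being killed. For curves this correction is harmless and the argument is clean; in higher dimension one relies on the structural results cited above.
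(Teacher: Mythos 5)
Your route is genuinely different from the paper's. The paper's own proof never touches $T^1$ of the cone directly: it takes the homogeneous ideal $I_\varepsilon\subset S\otimes\mathbb C[\varepsilon]$ defining the nontrivial projective deformation, observes that $\operatorname{Spec}$ of the same flat graded algebra $\mathcal A=(S\otimes\mathbb C[\varepsilon])/I_\varepsilon$ is a flat first-order deformation of $C(Y)=\operatorname{Spec}(S/I)$, and derives nontriviality by contradiction — a graded trivialization of $\mathcal A$ would, upon applying $\operatorname{Proj}$, trivialize the projective family. That argument is more elementary and requires no smoothness, normality, depth, or dimension hypotheses on $Y$, whereas your argument buys an actual description of \emph{where} the nonzero class of $T^1_{C(Y)}$ lives (on the rays over $\operatorname{Sing}(Y)$ in your Case (1), in the weight-zero graded piece at the vertex in your Case (2)). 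Your Case (1) — the product decomposition $(C(Y),\tilde q)\cong (Y,q)\times(\mathbb C,0)$ at a non-vertex point of the cone over a singular point, giving $\mathcal T^1_{C(Y),\tilde q}\cong \mathcal T^1_{Y,q}\otimes\mathcal O_{\mathbb C,0}$ — is correct and does not appear in the paper at all. Your Case (2) is, in substance, the inclusion $H^1(Y,T_Y)\hookrightarrow T^1(C(Y))_0$ that the paper proves in its Appendix G, but by a different mechanism: the paper glues trivial graded thickenings of local cones along the automorphisms $1+\varepsilon\delta_{ij}$ induced by a Čech cocycle for $H^1(Y,T_Y)$ and again uses $\operatorname{Proj}$ to get injectivity, rather than decomposing the Euler sequence on $C^*$ into $\mathbb G_m$-weights.

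The one genuine gap is the step you yourself flag in Case (2): passing from a nonzero class in $H^1(C^*,T_{C^*})_0$ (which your Euler-sequence argument does produce when $\dim Y=1$, since $H^2(Y,\mathcal O_Y(\nu))=0$) to a nonzero class in $T^1_{C(Y)}$. The identification $T^1_{C(Y)}\cong H^1(C^*,T_{C^*})$ is Schlessinger's theorem and requires $\operatorname{depth}_0\mathcal O_{C(Y)}\ge 2$, i.e.\ normality of the cone at the vertex; for the affine cone $\operatorname{Spec}(S/I)\subset\mathbb A^{n+1}$ of the theorem this amounts to projective normality of the embedding $Y\subset\mathbb P^n$, which is not assumed. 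If the embedding is not projectively normal, a first-order deformation of the punctured cone $C^*$ need not extend across the vertex, so the class originating from $H^1(Y,T_Y)$ may fail to lift to $T^1_{C(Y)}$, and your parenthetical fallback (replacing $S/I$ by the section ring $\bigoplus_d H^0(Y,\mathcal O_Y(d))$) changes the object whose rigidity is being asserted. To close this without extra hypotheses you would need either the paper's Appendix G construction (which builds the graded deformation of $S/I$ directly and detects nontriviality via $\operatorname{Proj}$, with no depth input) or the main proof's flatness argument; as written, your Case (2) proves the theorem only for projectively normal $Y$ (or after replacing $C(Y)$ by the normalized cone), and in dimension $>1$ it defers entirely to the cited structural results.
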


\begin{proof}
Let \(S=\mathbb C[x_0,\dots,x_n]\) be the graded polynomial ring and let \(I\subset S\) be the homogeneous ideal defining \(Y\subset\mathbb P^n\). Put \(A:=S/I\). Then the affine cone is \(C(Y)=\Spec(A)\) (with the grading on \(A\) coming from the grading of \(S\)).

A first order embedded deformation of \(Y\subset\mathbb P^n\) over the dual numbers \(D:=\Spec \mathbb C[\varepsilon]/(\varepsilon^2)\) is a closed subscheme
\(
\mathcal Y\ \subset\ \mathbb P^n\times D
\)
flat over \(D\) whose special fiber is \(Y\). Such a deformation may be described by a homogeneous ideal \(I_\varepsilon\subset S\otimes_{\mathbb C}\mathbb C[\varepsilon]\) with the property
\(
I_\varepsilon \bmod\varepsilon = I,
\)
and such that the quotient algebra
\[
\mathcal A \;:=\; \big(S\otimes_{\mathbb C}\mathbb C[\varepsilon]\big) / I_\varepsilon
\]
is flat over \(\mathbb C[\varepsilon]\). The family \(\mathcal Y\) is recovered as \(\operatorname{Proj}(\mathcal A)\) over \(D\).

\noindent Now consider \(\mathcal A\) as an (unprojectivized) graded algebra and form the affine scheme
\[
\mathcal C \;:=\; \Spec(\mathcal A)\ \longrightarrow\ D.
\]
Because \(\mathcal A\) is flat over \(\mathbb C[\varepsilon]\) by construction, the morphism \(\mathcal C\to D\) is a flat first-order deformation of the cone \(C(Y)=\Spec(A)\). Indeed the special fiber \(\mathcal C|_{\varepsilon=0}\) is \(\Spec(A)\).

\noindent It remains to check that nontriviality of the projective deformation implies nontriviality of the cone deformation. Suppose towards a contradiction that \(\mathcal C\) were a trivial deformation, i.e.\ there exists an isomorphism of \(D\)-schemes
\[
\Phi:\ \mathcal C \xrightarrow{\ \simeq\ } C(Y)\times D.
\]
On coordinate rings this gives an isomorphism of \(\mathbb C[\varepsilon]\)-algebras
\[
\Phi^\sharp:\ A\otimes_{\mathbb C}\mathbb C[\varepsilon]\ \xrightarrow{\ \simeq\ }\ \mathcal A.
\]
Applying the Proj construction to both sides yields an isomorphism of families over \(D\)
\[
\operatorname{Proj}(\mathcal A)\ \xrightarrow{\ \simeq\ }\ \operatorname{Proj}\big(A\otimes_{\mathbb C}\mathbb C[\varepsilon]\big) = Y\times D.
\]
But \(\operatorname{Proj}(\mathcal A)=\mathcal Y\), so \(\mathcal Y\) would be isomorphic to the trivial product \(Y\times D\), contradicting the assumption that \(\mathcal Y\) is a nontrivial first order deformation of \(Y\). Hence \(\mathcal C\) is a nontrivial first order deformation of \(C(Y)\).
Therefore, nonrigidity of \(Y\) implies nonrigidity of \(C(Y)\), as required. We have an alternative proof of this theorem i.e.  non-rigidity of cones by showing  the cohomological inclusion 
\(\;H^1(Y,T_Y)\hookrightarrow T^1(C(Y))_0\) (This is done in Appendix G.).
\end{proof}

%%%%%%%%%%%%%%%%%%%%%%%%%%%%%%%%%%%%%%%%%%%%%%%%%%%%
%%%%%%%%%%%%%%%%%%%%%%%%%%%%%%%%%%%%%%%%%%%%%%%%%%%%    THEOREM
%%%%%%%%%%%%%%%%%%%%%%%%%%%%%%%%%%%%%%%%%%%%%%%%%%%%

\vspace{0.1cm}

\begin{theorem}
Let \( Y \subset \mathbb{P}^n \) be a projective variety over \(\mathbb{C}\), and let \( C(Y) \subset \mathbb{A}^{n+1} \) be its affine cone. A nontrivial first-order deformation of \( C(Y) \) induces a nontrivial first-order deformation of \( Y \) if and only if the deformation of \( C(Y) \) preserves the homogeneity of the defining equations of \( Y \).
\end{theorem}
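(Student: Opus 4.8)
The plan is to make the statement precise by translating "preserves the homogeneity of the defining equations" into the condition that the deformed ideal $I_\varepsilon \subset S\otimes_{\mathbb C}\mathbb C[\varepsilon]$ can be chosen to be \emph{homogeneous} with respect to the standard grading on $S$. Concretely, a first-order deformation of $C(Y)=\Spec(A)$ corresponds to a class in $T^1_{C(Y)}=\Ext^1_A(\Omega_A,A)$, which is a graded $A$-module; its degree-zero graded piece $T^1_{C(Y)}{}_{(0)}$ is exactly the part realized by deformations of the \emph{homogeneous} ideal $I$, while the nonzero-degree pieces correspond to deformations that break homogeneity (they perturb the generators of $I$ by terms of nonzero weight). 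So the first step is to set up the bigraded structure: $S\otimes\mathbb C[\varepsilon]$ with $\deg\varepsilon=0$, and to observe that a deformation preserving homogeneity is one whose perturbation $g_i$ of each homogeneous generator $f_i$ is itself homogeneous of the same degree $\deg f_i$.

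Next I would prove the "if" direction. Assume the deformation of $C(Y)$ is given by a homogeneous ideal $I_\varepsilon\subset S\otimes\mathbb C[\varepsilon]$ with $I_\varepsilon\bmod\varepsilon=I$ and $\mathcal A:=(S\otimes\mathbb C[\varepsilon])/I_\varepsilon$ flat over $\mathbb C[\varepsilon]$. Since $I_\varepsilon$ is homogeneous, $\mathcal A$ is a graded $\mathbb C[\varepsilon]$-algebra, and $\operatorname{Proj}(\mathcal A)\to D$ is a well-defined closed subscheme $\mathcal Y\subset\mathbb P^n\times D$, flat over $D$ (flatness of $\mathcal A$ over $\mathbb C[\varepsilon]$ descends to flatness of the $\operatorname{Proj}$, since flatness can be checked on the affine pieces $D_+(x_i)$), with special fiber $\operatorname{Proj}(A)=Y$. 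This is essentially the construction already carried out in the proof of the cone theorem (Theorem before this one), run in reverse. For nontriviality: if $\mathcal Y\cong Y\times D$ as $D$-schemes, one must lift this to a graded isomorphism of the section algebras $\bigoplus_r H^0(\mathcal Y,\mathcal O(r))\cong \bigoplus_r H^0(Y,\mathcal O(r))\otimes\mathbb C[\varepsilon]$; here I would invoke the fact that $Y$ is projectively normal, or more carefully restrict to the case where $A$ agrees with the section ring in high degrees, so that the $\operatorname{Proj}$ of the deformed algebra recovers $\mathcal A$ in large degree, and conclude that $\mathcal A\cong A\otimes\mathbb C[\varepsilon]$ as graded algebras in high degrees, hence that the original cone deformation was trivial after all. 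The contrapositive gives nontriviality of the induced deformation of $Y$.

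For the "only if" direction I would argue by contraposition: suppose the deformation of $C(Y)$ does \emph{not} preserve homogeneity, i.e.\ every ideal $I_\varepsilon$ representing it has some generator perturbation $g_i$ with a nonzero-weight component. Then the corresponding class in $T^1_{C(Y)}$ lies (up to the homogeneous part, which by hypothesis does not capture it) in the strictly positive- or negative-degree graded pieces. The claim is that such a class does not descend to a class in $T^1_Y$: the $\operatorname{Proj}$ construction only sees the degree-zero behavior, and a purely inhomogeneous perturbation of the cone equations, after taking $\operatorname{Proj}$, either does not give a flat family over $D$ of closed subschemes of $\mathbb P^n$ at all (because the perturbed "ideal" is not homogeneous, so $\operatorname{Proj}$ is not defined in the naive way) or, when one forces homogeneity by an explicit coordinate change absorbing the inhomogeneity, one finds the induced family on $\mathbb P^n$ is the trivial one. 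Here I would use the graded decomposition $T^1_{C(Y)}=\bigoplus_{d\in\mathbb Z} T^1_{C(Y)}{}_{(d)}$ together with the identification $T^1_{C(Y)}{}_{(0)}$ with embedded deformations of $Y$ inside $\mathbb P^n$ (as in Schlessinger's and Pinkham's work on deformations of cones), so that only the degree-zero part of a cone deformation can produce a nontrivial projective deformation.

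The main obstacle I expect is the bookkeeping in the "only if" direction: making rigorous the claim that an inhomogeneous first-order deformation of the cone \emph{cannot} induce a nontrivial deformation of $Y$. The subtlety is that a given abstract (non-embedded) deformation class of $C(Y)$ may be representable by several different ideals $I_\varepsilon$, some "more homogeneous" than others, so "preserves homogeneity" must be phrased as an invariant property of the class — namely that it lies in the image of $T^1_{C(Y)}{}_{(0)}\to T^1_{C(Y)}$ — and then one must show this image is exactly the set of classes descending nontrivially to $Y$. I would handle this by appealing to the local-to-global / graded structure of $T^1$ for cones and to the compatibility of the $\operatorname{Proj}$ functor with the degree-zero part, citing Sernesi (\cite{S-06}) and the appendices (Appendix G, where the inclusion $H^1(Y,T_Y)\hookrightarrow T^1(C(Y))_0$ is established) for the technical identifications, rather than re-deriving the full graded deformation theory of cones here.
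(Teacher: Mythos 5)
Your proposal is essentially correct but takes a genuinely different, and considerably heavier, route than the paper. The paper's proof stays entirely at the level of equations: it writes the cone deformation as $f_i\mapsto f_i+\epsilon h_i$, declares that inhomogeneous $h_i$ simply do not define a subscheme of $\mathbb{P}^n\times D$ (so no induced deformation exists), and, in the homogeneous case, tests triviality of the induced family $\mathcal{Y}_\epsilon$ against an explicit infinitesimal projective coordinate change $x_j\mapsto x_j+\epsilon\sum_k a_{jk}x_k$, observing that the identity $h_i=\sum_j(\partial f_i/\partial x_j)(Ax)_j$ would simultaneously trivialize the cone deformation by the same linear automorphism of $\mathbb{A}^{n+1}$ --- contradiction. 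You instead invoke the graded decomposition $T^1_{C(Y)}=\bigoplus_d T^1_{C(Y)}{}_{(d)}$ and the Schlessinger--Pinkham identification of the degree-zero piece with embedded deformations of $Y$, and you run the triviality comparison through $\operatorname{Proj}$ and the section algebras. What your approach buys is invariance: ``preserves homogeneity'' becomes the well-defined condition that the class lies in the image of $T^1_{C(Y)}{}_{(0)}$, which repairs the genuine imprecision in the theorem's statement (the same deformation class admits many representing ideals $I_\varepsilon$), and it meshes with Appendix G. What the paper's approach buys is that it needs no projective normality or high-degree saturation argument, because triviality is checked directly against linear automorphisms rather than by lifting an abstract isomorphism $\mathcal{Y}\cong Y\times D$ to graded section rings.

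One caveat on your ``if'' direction: lifting a trivialization $\mathcal{Y}\cong Y\times D$ to an isomorphism of the algebras $\bigoplus_r H^0(\mathcal{O}(r))$ presupposes that the trivializing isomorphism is compatible with $\mathcal{O}(1)$, i.e.\ that you are using \emph{embedded} (or polarized) triviality rather than abstract triviality, in addition to the saturation/projective-normality issue you already flag. The paper implicitly makes the same choice (triviality means ``induced by an automorphism of $\mathbb{P}^n$,'' and infinitesimal automorphisms of $\mathbb{P}^n$ are linear), so this is a shared convention rather than an error on your part, but you should state it explicitly, since with the abstract notion of triviality the section-ring comparison would not go through as written.
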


\begin{proof}
  Let \( Y \subset \mathbb{P}^n \) be defined by homogeneous polynomials \( f_1, \dots, f_m \) in the homogeneous coordinates \( [x_0 : \dots : x_n] \).
    The affine cone \( C(Y) \subset \mathbb{A}^{n+1} \) is defined by the same polynomials \( f_1, \dots, f_m \) viewed as equations in the affine coordinates \( (x_0, \dots, x_n) \).
    Also, a first-order deformation of \( C(Y) \) is given by a family \( \mathcal{C}(Y)_\epsilon \subset \mathbb{A}^{n+1} \) over \(\mathbb{C}[\epsilon]/(\epsilon^2)\), defined by \( f_i + \epsilon h_i \), where \( h_i \) are polynomials (not necessarily homogeneous). The deformation is nontrivial if it is not induced by an automorphism of \(\mathbb{A}^{n+1}\).

\vspace{0.1cm}
 
     For the deformation \( \mathcal{C}(Y)_\epsilon \) to induce a deformation of \( Y \), the polynomials \( h_i \) must be homogeneous of the same degree as \( f_i \). If \( h_i \) is not homogeneous, the deformation does not preserve the projective structure and thus does not induce a deformation of \( Y \).
     Suppose \( h_i \) is homogeneous of degree \( \deg f_i \). Then \( f_i + \epsilon h_i \) define a first-order deformation \( \mathcal{Y}_\epsilon \subset \mathbb{P}^n \) of \( Y \).
     
\vspace{0.1cm}

  Assume the induced deformation \( \mathcal{Y}_\epsilon \) is trivial, i.e., induced by an automorphism of \(\mathbb{P}^n\). Such an automorphism is given by \( x_j \mapsto x_j + \epsilon \sum_k a_{jk} x_k \), where \( a_{jk} \in \mathbb{C} \).
    Applying this automorphism to \( Y \) to get the new equations are \( f_i(x + \epsilon A x) = f_i(x) + \epsilon \sum_j \frac{\partial f_i}{\partial x_j} (A x)_j \mod \epsilon^2 \).
  For this to match \( f_i + \epsilon h_i \), we must have \( h_i = \sum_j \frac{\partial f_i}{\partial x_j} (A x)_j \). If this holds, then the deformation of \( C(Y) \) is trivial (induced by a linear automorphism of \(\mathbb{A}^{n+1}\)), contradicting the assumption. Hence, \( \mathcal{Y}_\epsilon \) is nontrivial.
 
\vspace{0.2cm}

Therefore, a nontrivial deformation of \( C(Y) \) induces a nontrivial deformation of \( Y \) if and only if the deformation preserves the homogeneity of the defining equations of \( Y \).
\end{proof}

%%%%%%%%%%%%%%%%%%%%%%%%%%%%%%%%%%%%%%%%%%%%%%%%%%%%
%%%%%%%%%%%%%%%%%%%%%%%%%%%%%%%%%%%%%%%%%%%%%%%%%%%%    EXAMPLE
%%%%%%%%%%%%%%%%%%%%%%%%%%%%%%%%%%%%%%%%%%%%%%%%%%%%

\vspace{0.2cm}

\begin{example} {\it Explicit first--order deformation of a plane quartic and the induced deformation of its affine cone}
 
\noindent We present in the following an explicit nontrivial first--order deformation of the smooth plane quartic curve
\(
Y=\{x^4+y^4+z^4=0\}\subset\mathbb P^2
\)
and the graded first-order deformation  induces on the homogeneous coordinate ring
and on the affine cone \(C(Y)\).  We check flatness and give an elementary verification that the deformation is not induced by infinitesimal projective automorphisms, hence it is nontrivial.

\vspace{0.1cm}

\noindent {\it The curve and an infinitesimal perturbation.}
Let
\(
F(x,y,z)=x^4+y^4+z^4% 
\)
be a homogeneous polynomial in  $\mathbb{C}[x,y,z]$,
and let
\[
Y:=\{F=0\}\subset\mathbb P^2_\mathbb{C}.
\]
The curve \(Y\) is a smooth plane quartic; hence it is a smooth projective curve of genus \(g=3\). Its moduli space has positive dimension (indeed \(\dim \mathcal M_3=6\)), so nontrivial infinitesimal deformations exist.

We consider the infinitesimal deformation given by the polynomial
\[
F_\varepsilon(x,y,z):=x^4+y^4+z^4+\varepsilon\,x^2y^2\in \mathbb{C}[\varepsilon][x,y,z],
\qquad \varepsilon^2=0.
\]
This defines a closed subscheme
\[
\mathcal Y:=\{F_\varepsilon=0\}\subset\mathbb P^2_{\mathbb{C}[\varepsilon]/(\varepsilon^2)}
\]
whose special fiber (\(\varepsilon=0\)) is \(Y\). We claim that \(\mathcal Y\) is a nontrivial first--order deformation of \(Y\).
\noindent Let's show the flatness of the family first.
The homogeneous coordinate algebra of the family is
\[
\mathcal A_{\mathrm{proj}} \;=\; 
\bigoplus_{d\ge0}\, \Big(\mathbb{C}[\varepsilon][x,y,z]_d \big/ (F_\varepsilon)_d\Big),
\]
where \((F_\varepsilon)_d\) denotes the degree \(d\) component of the ideal generated by \(F_\varepsilon\). For each \(d\), the graded piece is a quotient of a finite free \(\mathbb{C}[\varepsilon]\)-module by a submodule that reduces to the corresponding submodule for \(Y\) modulo \(\varepsilon\). Hence, each graded piece is a free (hence flat) \(\mathbb{C}[\varepsilon]\)-module. Thus, \(\mathcal Y\to \Spec(\mathbb{C}[\varepsilon]/(\varepsilon^2))\) is flat, so it is a % 
first-order deformation.

\vspace{0.1cm}

\noindent Now, let's show that the deformation is nontrivial i.e. not induced by $\operatorname{PGL}_3$.
A first-order deformation of a plane curve given by perturbing its defining homogeneous polynomial
\[
F \rightsquigarrow F + \varepsilon \,G
\]
is induced by an infinitesimal projective change of coordinates if and only if the polynomial \(G\) lies in the image of the infinitesimal action of the Lie algebra \(\mathfrak{sl}_3\) on homogeneous degree-\(d\) polynomials (here \(d=4\)). We now explain this action and give an elementary check that \(G=x^2y^2\) is not in that image for the Fermat quartic \(F\).

\vspace{0.1cm}

\noindent {\it Infinitesimal $\mathfrak{sl}_3$--action on homogeneous polynomials.}
Let \(M=(m_{ij})\in\mathfrak{sl}_3(\mathbb{C})\) be an infinitesimal \(3\times3\) matrix with trace \(0\).
An infinitesimal change of coordinates
\[
(x,y,z)\mapsto (x,y,z) + \varepsilon\,M\cdot(x,y,z)^T
\]
acts on the polynomial \(F\) by the Lie derivative. More precisely, we have the following lemma:

\begin{lemma}
 The induced infinitesimal variation of a homogeneous polynomial \(H(x,y,z)\) of degree \(d\) is
\[
\delta_M H \;=\; \sum_{i,j} m_{ij} \, x_j \frac{\partial H}{\partial x_i},
\]
where we write \(x_1=x,x_2=y,x_3=z\) and \(x_j\frac{\partial}{\partial x_i}\) denotes the usual first-order differential operator coming from the coordinate change.
\end{lemma}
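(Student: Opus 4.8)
The plan is to derive the formula $\delta_M H = \sum_{i,j} m_{ij}\, x_j\, \partial H/\partial x_i$ directly from the definition of the infinitesimal coordinate change, working over the dual numbers $\mathbb{C}[\varepsilon]/(\varepsilon^2)$. First I would set up notation: write $x=(x_1,x_2,x_3)$ as a column vector and let the coordinate change be the $\mathbb{C}[\varepsilon]/(\varepsilon^2)$-algebra automorphism determined by $x_i \mapsto x_i + \varepsilon\,(Mx)_i = x_i + \varepsilon \sum_j m_{ij} x_j$. The variation $\delta_M H$ is by definition the coefficient of $\varepsilon$ in $H(x + \varepsilon M x) - H(x)$.

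The key computational step is a first-order Taylor expansion: since $\varepsilon^2 = 0$, one has
\[
H(x_1 + \varepsilon(Mx)_1,\, x_2 + \varepsilon(Mx)_2,\, x_3 + \varepsilon(Mx)_3)
= H(x) + \varepsilon \sum_{i=1}^{3} (Mx)_i \frac{\partial H}{\partial x_i} + O(\varepsilon^2),
\]
and the $O(\varepsilon^2)$ terms vanish identically in $\mathbb{C}[\varepsilon]/(\varepsilon^2)$. This Taylor expansion is exact (not merely asymptotic) precisely because the second- and higher-order terms all carry a factor $\varepsilon^2$; I would justify it either by the multivariable chain rule applied to the polynomial map, or by expanding monomials one at a time and using the product rule, which makes the truncation completely transparent. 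Substituting $(Mx)_i = \sum_j m_{ij} x_j$ and extracting the coefficient of $\varepsilon$ yields
\[
\delta_M H = \sum_{i=1}^3 \Big(\sum_{j=1}^3 m_{ij} x_j\Big)\frac{\partial H}{\partial x_i}
= \sum_{i,j} m_{ij}\, x_j \frac{\partial H}{\partial x_i},
\]
which is the claimed formula. I would also remark that each operator $x_j\,\partial/\partial x_i$ preserves the degree, so $\delta_M H$ is again homogeneous of degree $d$ when $H$ is, confirming that this is a well-defined action on the space of degree-$d$ forms; the trace-zero condition on $M$ is not needed for the formula itself but reflects that scalar matrices act trivially on $\mathbb{P}^2$ (they rescale $H$ and hence act trivially on the deformation class).

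There is no real obstacle here — the statement is essentially a bookkeeping identity, and the only thing to be careful about is making the truncation of the Taylor expansion rigorous rather than hand-waved. The cleanest way to do this, which I would adopt, is to prove it first for a single monomial $H = x_1^{a} x_2^{b} x_3^{c}$ by direct expansion of the product $(x_1 + \varepsilon(Mx)_1)^a (x_2 + \varepsilon(Mx)_2)^b (x_3 + \varepsilon(Mx)_3)^c$ modulo $\varepsilon^2$, observe that this reproduces the Leibniz rule $\sum_i (Mx)_i \partial/\partial x_i$ applied to the monomial, and then extend by $\mathbb{C}$-linearity to arbitrary homogeneous $H$. This reduces everything to the elementary identity $(u + \varepsilon v)^k \equiv u^k + k\varepsilon v u^{k-1} \pmod{\varepsilon^2}$, which completes the proof.
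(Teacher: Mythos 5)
Your proposal is correct and follows essentially the same route as the paper: both amount to the first-order Taylor expansion of $H$ along the perturbation $x\mapsto x+ (\text{infinitesimal})\cdot Mx$, with the only cosmetic difference that the paper differentiates the one-parameter family $H((I+tM)x)$ at $t=0$ while you extract the $\varepsilon$-coefficient over the dual numbers. Your added monomial-by-monomial verification via $(u+\varepsilon v)^k\equiv u^k+k\varepsilon v u^{k-1}\pmod{\varepsilon^2}$ is a fine, slightly more elementary way to justify the truncation, but it does not change the substance of the argument.
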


\begin{proof}
Let $H\in\mathbb{C}[x_1,\dots,x_n]$ and let $M=(m_{ij})\in\operatorname{Mat}_{n\times n}(\mathbb{C})$.  Consider the
one--parameter family of linear coordinate changes
\[
\Phi_t:\ \mathbb{C}^n \to \mathbb{C}^n,\qquad \Phi_t(x)=(I+tM)x,
\]
where $t$ is a (real or complex) parameter and $I$ is the identity matrix.  The
pullback of $H$ by $\Phi_t$ is the polynomial
\[
(\Phi_t^*H)(x) = H\big(\Phi_t(x)\big) = H\big(x + t(Mx)\big).
\]

By the chain rule, for each fixed $x\in\mathbb{C}^n$ we have the Taylor expansion
\[
H\big(x+t(Mx)\big) = H(x) + t\sum_{i=1}^n (Mx)_i\frac{\partial H}{\partial x_i}(x)
+ O(t^2).
\]
But \((Mx)_i=\sum_{j=1}^n m_{ij}x_j\).  Differentiating at \(t=0\) (i.e.\ taking the
infinitesimal variation) yields the linear operator \(\delta_M\) acting on \(H\):
\[
\delta_M H \;:=\; \left.\frac{d}{dt}\right|_{t=0} H\big((I+tM)x\big)
= \sum_{i=1}^n (Mx)_i\frac{\partial H}{\partial x_i}(x)
= \sum_{i,j} m_{ij}\,x_j\frac{\partial H}{\partial x_i}(x).
\]

Thus \(\delta_M H\) is exactly the derivative at \(t=0\) of the family of pulled-back
polynomials \(H\circ\Phi_t\).  Equivalently, if one regards \(v_M\) as the vector
field on affine space
\[
v_M \;=\; \sum_{i=1}^n (Mx)_i\frac{\partial}{\partial x_i}
      \;=\; \sum_{i,j} m_{ij}x_j\frac{\partial}{\partial x_i},
\]
then \(\delta_M H = v_M(H)\) is the Lie derivative of the function \(H\) along \(v_M\).

\end{proof}

 %%%%%%%%%%%%%%%%%%%%%%

\noindent Thus,  the image of the map
\[
\mathfrak{sl}_3(\mathbb{C}) \longrightarrow H^0(\mathbb{P}^2,\mathcal O_{\mathbb{P}^2}(4))\cong \text{(degree 4 polynomials)}
\]
is the subspace of infinitesimal variations of degree 4 polynomials coming from coordinate changes.

\vspace{0.1cm}

\noindent {\it An elementary verification that \(x^2y^2\) is not  in the image.}
We show that there is no matrix  \(M=(m_{ij})\in\mathfrak{sl}_3\) such that
\[
\delta_M(x^4+y^4+z^4) = x^2y^2.
\]
Compute \(\delta_M\) on the Fermat quartic \(F=x^4+y^4+z^4\):
\[
\delta_M F
= \sum_{i,j} m_{ij}\, x_j \frac{\partial}{\partial x_i}(x^4+y^4+z^4)
= \sum_{i,j} m_{ij}\, x_j \cdot 4 x_i^3.
\]
Which is equivalent to
\[
\delta_M F \;=\; 4\sum_{i,j} m_{ij}\, x_j x_i^3.
\]

\noindent So, every infinitesimal variation $\delta_M F$ is a linear combination of the
nine monomials
\[
\{\,x^4,\;x^3y,\;x^3z,\;xy^3,\;y^4,\;y^3z,\;xz^3,\;yz^3,\;z^4\,\},
\]
with coefficients given by the entries $m_{ij}$. (Since $M\in\mathfrak{sl}_3$ the diagonal
coefficients satisfy $m_{11}+m_{22}+m_{33}=0$, and then  the image space has dimension
at most $8$; nevertheless the \emph{support} of every $\delta_M F$ is contained
in the list of nine monomials above.)

\noindent Hence,  any element of the linear span
\[
\langle F\rangle + \mathrm{Im}\big(\mathfrak{sl}_3\to H^0(\mathbb{P}^2,\mathcal{O}(4))\big)
\]
is a linear combination of those same nine monomials (because adding a scalar
multiple $\lambda F$ only contributes the monomials $x^4,y^4,z^4$ which are
already among the nine).

\noindent Now, we  consider the quartic monomial
\(
G(x,y,z):=x^2y^2.
\)
This monomial does not  appear among the nine monomials listed above.
Therefore, there is no choice of $M\in\mathfrak{sl}_3$ and a scalar $\lambda\in\mathbb{C}$
such that
\[
\delta_M F + \lambda F = G.
\]
Hence, \(G=x^2y^2\) represents a direction in $H^0(\mathbb{P}^2,\mathcal{O}(4))$ which is in
the subspace generated by $F$ and the infinitesimal $\mathfrak{sl}_3$--action.
This is equivalent to say that the infinitesimal perturbation
\[
F\longmapsto F+\varepsilon\,x^2y^2
\]
is not induced by an infinitesimal projective change of coordinates (and by
rescaling the equation), so it defines a nontrivial embedded first--order
deformation of the plane quartic $F=0$.

%%%%%%%%%%%%%%%%%%%%%%%%%%%%%%%%%%%%%%%%%%%%

\vspace{0.3cm}

\noindent {\it Induced graded deformation of the homogeneous coordinate ring and the cone.}
Let
\[
A:=\frac{\mathbb{C}[x,y,z]}{(F)}\qquad\text{(graded by total degree)}
\]
be the homogeneous coordinate ring of \(Y\). The affine cone is \(C(Y)=\Spec(A)\subset\mathbb{A}^3\).

\noindent Consider the graded \(\mathbb{C}[\varepsilon]\)-algebra
\[
\mathcal A:=\frac{\mathbb{C}[\varepsilon][x,y,z]}{\big(x^4+y^4+z^4+\varepsilon\,x^2y^2\big)}.
\]
This algebra is flat over \(\mathbb{C}[\varepsilon]\) because each graded piece is a quotient of a finite free \(\mathbb{C}[\varepsilon]\)-module by a relation that reduces mod \(\varepsilon\) to the relation for \(A\). Thus \(\mathcal A\) defines a graded first--order deformation of \(A\), and hence a first--order deformation of the affine cone \(C(Y)=\Spec(A)\).

\noindent Projectivizing \(\mathcal A\) recovers the projective family \(\mathcal Y\) above:
\[
\operatorname{Proj}\mathcal A \;=\; \mathcal Y.
\]
Since \(\mathcal Y\) is nontrivial, %  
 the graded algebra \(\mathcal A\) is not graded-isomorphic to \(A\otimes_\mathbb{C} \mathbb{C}[\varepsilon]\). Therefore the class of \(\mathcal A\) in
\[
T^1(A)_0=\Ext^1_A(\Omega^1_{A/\mathbb{C}},A)_0
\]
is nonzero. So, we have produced an explicit nontrivial graded first-order deformation of the cone. In particular, the cone \(C(Y)\) is nonrigid.

\vspace{0.1cm}

\newpage

\begin{remark}  ${}$% 

\begin{enumerate}
  \item[(1)] The same construction works for other smooth plane curves of degree \(d\ge 3\): take any homogeneous polynomial \(F\) defining a smooth plane curve and any perturbation \(G\) whose class modulo the image of the $\mathfrak{sl}_3$--action is nonzero; then \(F+\varepsilon G\) gives a nontrivial first--order deformation of the curve and of its cone.
  \item[(2)] For plane quartics (degree \(4\)) the vector space of homogeneous quartics has dimension $\binom{4+2}{2}=15$, while $\dim\mathfrak{sl}_3=8$, so there are many directions transverse to the orbit of $\operatorname{PGL}_3$; random monomials such as $x^2y^2$ typically give nontrivial deformations.
\end{enumerate}

\end{remark}

 \end{example}
 
 %%%%%%%%%%%%%%%%%%%%%%%%%%%%%%%%%%%%%%%%%%%%%%%%%%%%%%%%%%%%%%%%%%%%%%%%%%

 \vspace{0.1cm}
 
\begin{example} Let us describe  an example of a trivial deformation by choosing a perturbation by another monomial.
Let
\[
F(x,y,z)=x^4+y^4+z^4\in\mathbb C[x,y,z],
\quad
F_\varepsilon(x,y,z)=F(x,y,z)+\varepsilon\,G(x,y,z),
\quad G(x,y,z)=x^3y,
\]
with \(\varepsilon^2=0\). This defines a first--order deformation of the
projective quartic \(Y=\{F=0\}\subset\mathbb P^2\).

\noindent Recall that a first--order deformation is trivial if it is induced by an infinitesimal
projective coordinate change (an element of \(\mathfrak{pgl}_3\) whose
tangent space is \(\mathfrak{sl}_3\)) possibly together with rescaling the
polynomial. More precisely, for
\(M=(a_{ij})\in\mathfrak{sl}_3(\mathbb C)\) we have
\[
F((I+\varepsilon M)x)=F(x)+\varepsilon\sum_{i,j} a_{ij}\,x_j\frac{\partial F}{\partial x_i}+O(\varepsilon^2).
\]
Thus, the tangent action is
\[
\delta_M F:=\sum_{i,j} a_{ij}\,x_j\frac{\partial F}{\partial x_i}.
\]
Compute the partial derivatives of \(F\):
\[
\frac{\partial F}{\partial x}=4x^3,\qquad
\frac{\partial F}{\partial y}=4y^3,\qquad
\frac{\partial F}{\partial z}=4z^3.
\]
Hence
\[
\delta_M F = 4\sum_{i,j} a_{ij}\,x_i^3x_j.
\]
We ask whether there exist \(M\in\mathfrak{sl}_3\) and \(c\in\mathbb C\)
such that
\[
x^3y \;=\; \delta_M F + cF \;=\; 4\sum_{i,j} a_{ij}\,x_i^3x_j + c(x^4+y^4+z^4).
\]
We compare coefficients monomial by monomial.

\begin{enumerate}
\item[(i)] Coefficient of \(x^3y\): \(1 = 4a_{12}\), so \(a_{12}=\tfrac14\).
\item[(ii)] Coefficients of the other mixed monomials
  \(x^3z,y^3x,y^3z,z^3x,z^3y\) vanish on the left, hence
  \(a_{13}=a_{21}=a_{23}=a_{31}=a_{32}=0\).
\item[(iii)] Coefficients of the pure fourth powers give
  \(0=4a_{11}+c,\ 0=4a_{22}+c,\ 0=4a_{33}+c\), so
  \(a_{11}=a_{22}=a_{33}=-c/4\).
\end{enumerate}

\noindent The fact that  \(\operatorname{tr}M=a_{11}+a_{22}+a_{33}=0\) 
  forces
\(-3c/4=0\), hence \(c=0\) and \(a_{11}=a_{22}=a_{33}=0\). Thus, the
unique solution is the traceless matrix with single nonzero entry
\(a_{12}=\tfrac14\):
\[
M=\begin{pmatrix}0 & \tfrac14 & 0\\[4pt] 0 & 0 & 0\\[4pt] 0 & 0 & 0\end{pmatrix}\in\mathfrak{sl}_3(\mathbb C),
\]
and for this matrix \(M\) we obtain
\[
\delta_M F = 4a_{12}x^3y = x^3y.
\]
Therefore, \(G=x^3y=\delta_M F\), and then  the deformation \(F_\varepsilon=F+\varepsilon x^3y\)
is induced by an infinitesimal projective coordinate change and then it is
\emph{trivial}.

 %%%%%%%%%%%%%%%%%%%%%%%%%%%%%%%%%%%%%%%%%%%%%%%%%%%%%
%%%%%%%%%%%%%%%%%%%%%%%%%%%%%%%%%%%%%%%%%%%%%%%%%%%%%
 
 \vspace{0.1cm}
 
 \noindent In the following we write down an explicit 1-parameter family generating \(M\).

Let
\[
M=\begin{pmatrix}0 & \tfrac14 & 0\\[4pt] 0 & 0 & 0\\[4pt] 0 & 0 & 0\end{pmatrix}\in\mathfrak{sl}_3(\mathbb C),
\qquad F(x,y,z)=x^4+y^4+z^4.
\]
Note that \(M^2=0\). Hence, for \(t\in\mathbb C\) the matrix exponential
reduces to
\[
g(t):=\exp(tM)=I+tM=\begin{pmatrix}1 & \tfrac t4 & 0\\[4pt] 0 & 1 & 0\\[4pt] 0 & 0 & 1\end{pmatrix}\in\mathrm{SL}_3(\mathbb C),
\]
and then \(g(t)\) determines a 1-parameter family of projective linear
transformations.

\noindent The action on homogeneous coordinates is
\[
g(t)\cdot(x,y,z)=\big(x+\tfrac t4 y,\; y,\; z\big).
\]
Differentiating at \(t=0\) gives
\[
\left.\frac{d}{dt}\right|_{t=0} g(t)\cdot (x,y,z) \;=\; M\cdot(x,y,z),
\]
so  \(M\) is the infinitesimal generator.
Now apply \(g(t)\) to the polynomial \(F\):
\[
F\big(g(t)\cdot(x,y,z)\big) = \big(x+\tfrac t4 y\big)^4 + y^4 + z^4.
\]
Expanding the binomial gives
\[
\big(x+\tfrac t4 y\big)^4
= x^4 + 4\cdot\frac t4\,x^3y + 6\cdot\Big(\frac t4\Big)^2 x^2y^2
  + 4\cdot\Big(\frac t4\Big)^3 xy^3 + \Big(\frac t4\Big)^4 y^4,
\]
thus,
\[
F\big(g(t)\cdot(x,y,z)\big) = x^4+y^4+z^4 + t\cdot x^3y + O(t^2).
\]
Therefore,
\[
\left.\frac{d}{dt}\right|_{t=0} F\big(g(t)\cdot(x,y,z)\big) = x^3y = \delta_M F,
\]
and then  the first-order deformation \(F+\varepsilon x^3y\) is obtained from the
finite family \(g(t)\) and is thus trivial.

\end{example}
 %%%%%%%%%%%%%%%%%%%%%%%%%%%%%%%%%%%%%%%%%%%%%%%%%%%%%
%%%%%%%%%%%%%%%%%%%%%%%%%%%%%%%%%%%%%%%%%%%%%%%%%%%%%

%%%%%%%%%%%%%%%%%%%%%%%%%%%%%%%%%%%%%%%%%%%%%%%%%%%%%%%%%%%%%%%%%%%%%%%%%%%%%%%%%%%%%%%%%%%%%%%%%%%%%%%%%%%%%%%%%%%%%%%%%%%%%%%%%%%%%%%%%%%%%%%%%%%%

\subsection{A Duality Criterion for Rigidity}

\begin{proposition}
Let $X$ be a smooth projective curve with canonical bundle $\omega_X$.  
Then there is a natural isomorphism
\[
H^1(X, T_X) \;\cong\; H^0(X, \omega_X^{\otimes 2})^\vee.
\]
In particular, if $H^0(X, \omega_X^{\otimes 2}) = 0$, then $H^1(X, T_X) = 0$,  
and hence $X$ is rigid.
\end{proposition}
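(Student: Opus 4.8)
The plan is to combine Serre duality with the elementary identification of the tangent bundle of a curve. First I would recall that on a smooth projective curve $X$ over $\mathbb{C}$ the sheaf $\Omega_X = \omega_X$ is a line bundle, so $T_X = \mathcal{H}om(\Omega_X,\mathcal{O}_X) = \omega_X^{-1}$ is its inverse, and all the higher local contributions $\mathcal{E}xt^i(\Omega_X,\mathcal{O}_X)$ vanish for $i>0$ because $\Omega_X$ is locally free. By the local-to-global sequence recalled in Section~2 this already yields the classical Kodaira--Spencer identification $T^1_X \cong H^1(X,T_X)$, so it suffices to compute $H^1(X,T_X)$.

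Next I would apply Serre duality on $X$: for any locally free sheaf $\mathcal{F}$ one has a natural perfect pairing
\[
H^1(X,\mathcal{F}) \times H^0(X,\mathcal{F}^{\vee}\otimes\omega_X) \longrightarrow H^1(X,\omega_X) \cong \mathbb{C},
\]
hence a natural isomorphism $H^1(X,\mathcal{F}) \cong H^0(X,\mathcal{F}^{\vee}\otimes\omega_X)^{\vee}$. Taking $\mathcal{F} = T_X = \omega_X^{-1}$ gives $\mathcal{F}^{\vee}\otimes\omega_X = \omega_X\otimes\omega_X = \omega_X^{\otimes 2}$, and therefore
\[
H^1(X,T_X) \;\cong\; H^0(X,\omega_X^{\otimes 2})^{\vee},
\]
which is the asserted natural isomorphism. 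For the rigidity statement, if $H^0(X,\omega_X^{\otimes 2}) = 0$ then its dual vanishes, so $H^1(X,T_X) = 0$; combined with $T^1_X \cong H^1(X,T_X)$ this forces $T^1_X = 0$, i.e.\ $X$ is rigid.

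As a consistency check one can read off the genus dependence quoted in the introduction from $\deg\omega_X^{\otimes 2} = 4g-4$ together with Riemann--Roch: for $g=0$ the degree is negative, so $h^0(\omega_X^{\otimes 2}) = 0$ (rigidity of $\mathbb{P}^1$); for $g=1$ one has $\omega_X \cong \mathcal{O}_X$, hence $h^0(\omega_X^{\otimes 2}) = 1$; and for $g\ge 2$ the degree $4g-4 > 2g-2$ gives $h^0(\omega_X^{\otimes 2}) = 3g-3$.

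\textbf{Main obstacle.}
There is no serious difficulty here; the only points requiring care are citing Serre duality in its correct (natural, functorial) form and verifying that the dualizing sheaf $\omega_X$ indeed coincides with $\Omega_X^1$ for a smooth projective curve, together with checking that on a smooth curve no further local Ext contributions enter $T^1_X$, so that the computation of $H^1(X,T_X)$ is genuinely the whole story.
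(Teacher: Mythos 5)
Your proof is correct and follows essentially the same route as the paper: apply Serre duality to $\mathcal{F}=T_X$ on the curve, identify $T_X^\vee\otimes\omega_X\cong\omega_X^{\otimes 2}$ via $\Omega_X\cong\omega_X$, and conclude. Your additional remark that the vanishing of the local $\mathcal{E}xt^i$ for $i>0$ gives $T^1_X\cong H^1(X,T_X)$ only makes explicit a step the paper leaves implicit in passing from $H^1(X,T_X)=0$ to rigidity.
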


\begin{proof}
By Serre duality, for any coherent sheaf $\mathcal{F}$ on a smooth projective variety $X$ of dimension $n$, we have a natural duality isomorphism
\[
H^i(X, \mathcal{F}) \;\cong\; H^{n-i}(X, \mathcal{F}^\vee \otimes \omega_X)^\vee,
\]
where $\omega_X$ denotes the canonical sheaf.

\medskip
\noindent In the case where $X$ is a smooth projective curve (so $n=1$), and taking $\mathcal{F} = T_X$, this gives
\[
H^1(X, T_X) \;\cong\; H^{0}(X, T_X^\vee \otimes \omega_X)^\vee.
\]
But $T_X^\vee = \Omega_X$, the cotangent bundle, and for a smooth curve $\Omega_X \cong \omega_X$.  
Hence
\[
T_X^\vee \otimes \omega_X \;\cong\; \omega_X \otimes \omega_X \;=\; \omega_X^{\otimes 2}.
\]
Substituting, we obtain
\[
H^1(X, T_X) \;\cong\; H^{0}(X, \omega_X^{\otimes 2})^\vee.
\]
%\medskip
Therefore, if $H^0(X, \omega_X^{\otimes 2}) = 0$, its dual space also vanishes, implying
\[
H^1(X, T_X) = 0.
\]
By the standard interpretation of $H^1(X, T_X)$ as the space of global infinitesimal deformations of $X$, this means that $X$ admits no nontrivial deformations, i.e., $X$ is  globally \emph{rigid}.
\end{proof}

\begin{remark}
Geometrically, $H^0(X, \omega_X^{\otimes 2})$ is the space of \emph{holomorphic quadratic differentials} on the corresponding Riemann surface.  
Serre duality identifies these with the dual of the space of infinitesimal deformations of the complex structure $H^1(X, T_X)$.  
Thus, the vanishing of quadratic differentials implies analytic and algebraic  global rigidity of the curve.
\end{remark}

\section{Local-to-global Ext spectral sequence and rigidity}

For any coherent $\mathcal{O}_X$-modules $\mathcal{F}$ and $\mathcal{G}$ on a
locally Noetherian scheme $X$, there exists a \emph{local-to-global Ext spectral sequence} (or Grothendieck spectral sequence for Ext),
see e.g. R. Godement \cite{G-58} Chapter II, or R. Hartshorne \cite{H-77} Chapter III (see also Appendix B for more details):
\[
E_2^{p,q} = H^p\!\big(\mathcal{E}xt^q_{\mathcal{O}_X}(\mathcal{F}, \mathcal{G})\big)
\;\;\Rightarrow\;\;
\operatorname{Ext}^{p+q}_{\mathcal{O}_X}(\mathcal{F}, \mathcal{G}).
\]
In particular, taking $\mathcal{F} = \Omega_X$ and $\mathcal{G} = \mathcal{O}_X$ gives
\[
E_2^{p,q} = H^p\!\big(\mathcal{E}xt^q_{\mathcal{O}_X}(\Omega_X, \mathcal{O}_X)\big)
\;\;\Rightarrow\;\;
\operatorname{Ext}^{p+q}_{\mathcal{O}_X}(\Omega_X, \mathcal{O}_X).
\]

\vspace{0.2cm}

\noindent Thus, for a reduced curve $X$, the local-to-global spectral sequence
\[
E_2^{p,q} = H^p\big(\mathcal{E}xt^q_{\mathcal{O}_X}(\Omega_X, \mathcal{O}_X)\big)
\;\;\Rightarrow\;\;
\operatorname{Ext}^{p+q}_{\mathcal{O}_X}(\Omega_X, \mathcal{O}_X)
\]
has only two potentially nonzero columns, corresponding to $q = 0,1$, since $\Omega_X$ has homological dimension~$\leq 1$ for a curve. 
Thus the $E_2$-page takes the form:
\[
\begin{array}{c|cc}
q & E_2^{0,q} & E_2^{1,q} \\
\hline
1 & H^0(\mathcal{E}xt^1(\Omega_X,\mathcal{O}_X)) & H^1(\mathcal{E}xt^1(\Omega_X,\mathcal{O}_X))\\[3pt]
0 & H^0(\mathcal{H}om(\Omega_X,\mathcal{O}_X)) & H^1(\mathcal{H}om(\Omega_X,\mathcal{O}_X))
\end{array}
\]
and all higher $E_2^{p,q}$ terms vanish. 
The spectral sequence therefore \emph{degenerates at the $E_2$-page}, producing the canonical short exact sequence
\[
0 \longrightarrow H^1(\mathcal{H}om(\Omega_X, \mathcal{O}_X))
   \longrightarrow \operatorname{Ext}^1_{\mathcal{O}_X}(\Omega_X, \mathcal{O}_X)
   \longrightarrow H^0(\mathcal{E}xt^1(\Omega_X, \mathcal{O}_X))
   \longrightarrow 0.
\]
Using the identifications
\[
\mathcal{H}om(\Omega_X,\mathcal{O}_X) \cong T_X
\qquad \text{and} \qquad
H^0(\mathcal{E}xt^1(\Omega_X,\mathcal{O}_X)) \cong 
\bigoplus_{p\in \operatorname{Sing}(X)} T^1_{\mathcal{O}_{X,p}},
\]
we obtain the global--local decomposition
\[
0 \longrightarrow H^1(X, T_X)
   \longrightarrow T_X^1
   \longrightarrow \bigoplus_{p\in \operatorname{Sing}(X)} T^1_{\mathcal{O}_{X,p}}
   \longrightarrow 0,
\]
which describes how infinitesimal deformations of $X$ are assembled from global smooth deformations and local singular contributions.

\vspace{0.2cm}

\begin{theorem}[Global-to-local exact sequence for nodal curves]
Let \(X\) be a reduced nodal curve with normalization
\(\nu:\widetilde X\to X\)
and preimages of the nodes
\(D=\sum_{i=1}^g(p_i+q_i)\).
Then there is a natural short exact sequence
\[
0\longrightarrow
H^1\!\big(\widetilde X,\,T_{\widetilde X}(-D)\big)
\longrightarrow
\operatorname{Ext}^1_X(\Omega^1_X,\mathcal{O}_X)
\longrightarrow
\bigoplus_{i=1}^g T^1_{x_i}
\longrightarrow 0,
\]
where
\[
T^1_{x_i}=\operatorname{Ext}^1_{\mathcal{O}_{X,x_i}}(\Omega^1_{X,x_i},\mathcal{O}_{X,x_i})\cong k
\]
is the local smoothing space of the node \(x_i\).
\end{theorem}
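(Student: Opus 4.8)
The plan is to deduce the statement from the local-to-global short exact sequence
\[
0 \longrightarrow H^1(X,T_X) \longrightarrow \operatorname{Ext}^1_X(\Omega^1_X,\mathcal{O}_X) \longrightarrow H^0(X,\mathcal{T}^1_X) \longrightarrow 0
\]
established above for an arbitrary reduced curve, by identifying the two outer terms in the nodal case. The right-hand term is the easy identification: since $X$ is nodal, $\mathcal{T}^1_X=\mathcal{E}xt^1(\Omega^1_X,\mathcal{O}_X)$ is a skyscraper sheaf supported at the finite set of nodes, so $H^0(X,\mathcal{T}^1_X)=\bigoplus_{i=1}^g \mathcal{T}^1_{X,x_i}$; and at a node the completed local ring is $k[[u,v]]/(uv)$, whose Tjurina algebra is $k[[u,v]]/(uv,u,v)\cong k$. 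This is exactly $\bigoplus_{i=1}^g T^1_{x_i}$, giving the surjection on the right.

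The substantive step is the identification $H^1(X,T_X)\cong H^1(\widetilde X,T_{\widetilde X}(-D))$, and I would obtain it from a sheaf isomorphism $T_X\cong\nu_*\bigl(T_{\widetilde X}(-D)\bigr)$ on $X$. Away from the nodes $\nu$ is an isomorphism and $D$ is empty, so there is nothing to prove; the content is purely local at each node $x_i$. There I would compute $T_{X,x_i}=\operatorname{Der}_k(\mathcal{O}_{X,x_i})$ directly: writing $\mathcal{O}_{X,x_i}\cong k[[u,v]]/(uv)$ one has $\Omega^1=(\mathcal{O}\,du\oplus\mathcal{O}\,dv)/(u\,dv+v\,du)$, so a derivation corresponds to a pair $(a,b)=(D(u),D(v))$ subject to $ub+va=0$; because $uv=0$ the element $ub$ lies in $uk[[u]]$ and $va$ in $vk[[v]]$, whose intersection is $0$, forcing $a\in uk[[u]]$ and $b\in vk[[v]]$. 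Under the normalization $\mathcal{O}_{X,x_i}\subset k[[s]]\times k[[t]]$ with $u\leftrightarrow s$ and $v\leftrightarrow t$, this is precisely the module of pairs of vector fields on the two branches vanishing at the preimages $p_i,q_i$ of the node, i.e.\ the stalk of $\nu_*(T_{\widetilde X}(-D))$ at $x_i$. One then checks that the identification is $\mathcal{O}_X$-linear and patches with the trivial identification over the smooth locus, yielding the global isomorphism.

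Granting $T_X\cong\nu_*(T_{\widetilde X}(-D))$, the cohomological comparison is formal: $\nu$ is finite, hence affine, so $R^q\nu_*$ vanishes for $q>0$ and the Leray spectral sequence collapses to give $H^i(X,\nu_*\mathcal{F})\cong H^i(\widetilde X,\mathcal{F})$ for every coherent $\mathcal{F}$ on $\widetilde X$; taking $\mathcal{F}=T_{\widetilde X}(-D)$ gives $H^1(X,T_X)\cong H^1(\widetilde X,T_{\widetilde X}(-D))$. Substituting this and the computation of $H^0(X,\mathcal{T}^1_X)$ into the local-to-global sequence produces the asserted short exact sequence; naturality in $X$ follows from functoriality of the local-to-global spectral sequence together with the canonicity of the normalization and of the isomorphism $T_X\cong\nu_*(T_{\widetilde X}(-D))$. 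The main obstacle is the local derivation computation at the node together with its compatibility with the line bundle $T_{\widetilde X}(-D)$ branch by branch; once that sheaf-level statement is secured, everything else is bookkeeping with finite pushforward and the already-proven degeneration of the local-to-global Ext spectral sequence for curves.
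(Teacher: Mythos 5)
Your proposal is correct and follows essentially the same route as the paper: the degeneration of the local-to-global Ext spectral sequence for a curve gives the sequence $0\to H^1(X,T_X)\to\operatorname{Ext}^1_X(\Omega^1_X,\mathcal{O}_X)\to H^0(X,\mathcal{T}^1_X)\to 0$, the right term is the skyscraper sum of the one-dimensional local $T^1$'s at the nodes, and the left term is identified via $T_X\cong\nu_*\bigl(T_{\widetilde X}(-D)\bigr)$ together with exactness of pushforward along the finite map $\nu$. In fact you supply more detail than the paper's sketch, since you actually carry out the derivation computation at $k[[u,v]]/(uv)$ proving the sheaf isomorphism, which the paper relegates to a separate lemma.
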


\begin{proof}[Sketch of the proof]
\vspace{0.2cm}
\noindent In fact, from the normalization exact sequence
\[
0\to\Omega^1_X\to\nu_*\Omega^1_{\widetilde X}\to Q\to0,
\]
apply \(\operatorname{Ext}^i_X(-,\mathcal{O}_X)\) or use the local-to-global spectral sequence
\(
E_2^{p,q}=H^p(\mathcal E\!xt^q_X(\Omega^1_X,\mathcal{O}_X))
\Rightarrow
\operatorname{Ext}^{p+q}_X(\Omega^1_X,\mathcal{O}_X).
\)
Since \(H^1(X,T_X)\simeq H^1(\widetilde X,T_{\widetilde X}(-D))\)
and \(\mathcal E\!xt^1_X(\Omega^1_X,\mathcal{O}_X)\)
is supported at the nodes with stalks \(T^1_{x_i}\),
one obtains the above exact sequence.  For more details see \cite{H-77},  \cite{G-58}.
 \end{proof}

\vspace{0.2cm}

\begin{example}[Deformations of a rational curve with \(g\) nodes.]${}$
 
Let \(X\) be the curve obtained from \(\mathbb P^1\)  (over an algebraically closed field k) by identifying \(g\) disjoint pairs of smooth points
\[
(a_i,b_i),\qquad i=1,\dots,g,
\]
each identification producing an ordinary node \(x_i\in X\). Thus the normalization
\(\nu:\widetilde X\to X\) is \(\widetilde X\cong\mathbb P^1\), and the preimages of the nodes form
a divisor
\[
D=\sum_{i=1}^g (p_i+q_i)\subset \widetilde X,
\]
with \(\nu(p_i)=\nu(q_i)=x_i\).

\vspace{0.2cm}

\begin{lemma}% 
Consider the above 
 reduced connected curve $X$ with only ordinary nodes as singularities,
and let $\nu:\widetilde X\to X$ be its normalization.
Denote by $D=\sum_{i=1}^g(p_i+q_i)$ the divisor of pre-images of the nodes.
Then
\[
T_X \simeq \nu_*T_{\widetilde X}(-D),
\qquad\text{and hence}\qquad
H^1(X,T_X)\simeq H^1(\widetilde X,T_{\widetilde X}(-D)).
\]
\end{lemma}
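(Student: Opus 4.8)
The plan is to establish the sheaf isomorphism $T_X\cong\nu_*T_{\widetilde X}(-D)$ by a local analysis at the nodes, and then obtain the cohomological statement for free from the fact that $\nu$ is a finite, hence affine, morphism. Recall $T_X=\mathcal{H}om(\Omega^1_X,\mathcal{O}_X)=\Der_k(\mathcal{O}_X)$.

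\textbf{Step 1: a canonical comparison map.} Since $X$ is reduced, $\mathcal{O}_X\hookrightarrow\nu_*\mathcal{O}_{\widetilde X}$ is injective and identifies $\nu_*\mathcal{O}_{\widetilde X}$ with the integral closure of $\mathcal{O}_X$ in its total ring of fractions. One way to proceed is to invoke Seidenberg's theorem, by which every $k$-derivation of $\mathcal{O}_X$ extends uniquely to one of $\nu_*\mathcal{O}_{\widetilde X}$; this gives a canonical $\mathcal{O}_X$-linear map $T_X=\Der_k(\mathcal{O}_X)\to\Der_k(\nu_*\mathcal{O}_{\widetilde X})=\nu_*T_{\widetilde X}$. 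Away from $\operatorname{Sing}(X)$ the morphism $\nu$ is an isomorphism and $D$ does not meet the corresponding locus, so there this map is an isomorphism $T_X\xrightarrow{\ \sim\ }\nu_*T_{\widetilde X}=\nu_*T_{\widetilde X}(-D)$. It therefore remains only to compute the map at a node and to check that its image there is the twisted subsheaf.

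\textbf{Step 2: the local computation at a node.} Work in the complete local ring $\mathcal{O}_{X,x_i}\cong k[[u,v]]/(uv)$, whose normalization is $k[[u]]\times k[[v]]$ via $u\mapsto(u,0)$, $v\mapsto(0,v)$; inside this product $\mathcal{O}_{X,x_i}=k[[u]]+k[[v]]$ (with intersection $k$). A $k$-derivation $D$ is determined by $D(u),D(v)\in\mathcal{O}_{X,x_i}$, subject to $u\,D(v)+v\,D(u)=0$ modulo $(uv)$. Reading this relation in the two branches and using $uv=0$ forces $D(u)\in(u)$ and $D(v)\in(v)$; conversely $D(u)=u\,\alpha(u)$, $D(v)=v\,\beta(v)$ with $\alpha\in k[[u]]$, $\beta\in k[[v]]$ always satisfies the relation and defines a derivation. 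Hence, under restriction to the normalization,
\[
T_{X,x_i}\;\cong\; u\,k[[u]]\,\partial_u\,\oplus\, v\,k[[v]]\,\partial_v\;=\;\big(\nu_*T_{\widetilde X}(-D)\big)_{x_i},
\]
the last equality because $D=p_i+q_i$ meets the two branches of $\widetilde X$ through $x_i$ in the single reduced points $\{u=0\}$ and $\{v=0\}$, so each local factor of $T_{\widetilde X}$ gets twisted down by one. Combining this with the smooth-locus case of Step 1, the comparison map is a stalkwise isomorphism, hence a global isomorphism $T_X\xrightarrow{\ \sim\ }\nu_*T_{\widetilde X}(-D)$ of $\mathcal{O}_X$-modules.

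\textbf{Step 3: passing to cohomology.} Since $\nu$ is finite it is affine, so $R^q\nu_*\mathcal{G}=0$ for all $q>0$ and all quasi-coherent $\mathcal{G}$ on $\widetilde X$; the Leray spectral sequence then collapses to $H^p(X,\nu_*\mathcal{G})\cong H^p(\widetilde X,\mathcal{G})$ for all $p$. Taking $\mathcal{G}=T_{\widetilde X}(-D)$, $p=1$, and feeding in Step 2 gives $H^1(X,T_X)\cong H^1(\widetilde X,T_{\widetilde X}(-D))$, as claimed. \textbf{The main obstacle} is Step 2: one must handle the node carefully (the relation $u\,D(v)+v\,D(u)=0$ modulo $(uv)$, branch by branch) rather than wave hands, and decide whether Seidenberg's extension result is genuinely needed — in fact the direct two-branch computation produces the comparison map and its isomorphy simultaneously and sidesteps any characteristic hypothesis, so I would present that version.
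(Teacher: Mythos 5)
Your proof is correct and follows essentially the same route as the paper: a local computation in $k[[u,v]]/(uv)$ showing that derivations must vanish at the two branch points (hence land in $T_{\widetilde X}(-D)$), followed by the collapse of the Leray spectral sequence for the finite morphism $\nu$ to transfer $H^1$. Your Step 2 is in fact somewhat more careful than the paper's version, which argues the vanishing at the preimages more informally from the Leibniz relation $D(xy)=0$.
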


\begin{proof}
Locally at a node $x_i\in X$, the completed local ring of $X$ is
\[
\widehat{\mathcal O}_{X,x_i}\simeq k[[x,y]]/(xy),
\]
while the normalization is the disjoint union of the two branches
\[
\operatorname{Spec} k[[x]]\ \sqcup\ \operatorname{Spec} k[[y]].
\]
The local derivations on $\widetilde X$ are generated by
$\frac{\partial}{\partial x}$ and $\frac{\partial}{\partial y}$.
A derivation on $\widetilde X$ descends to a derivation on $X$
if and only if it takes equal values on the two branches at the node.
This forces both components to vanish at $x=y=0$.
Hence the derivations on $\widetilde X$ that descend to $X$ are
exactly those vanishing at the preimages of the nodes, i.e.
sections of $T_{\widetilde X}(-D)$.
Globally this gives an isomorphism of sheaves
\[
T_X\simeq\nu_*T_{\widetilde X}(-D).
\]

\noindent Since $\nu$ is finite, $R^i\nu_*=0$ for $i>0$, so
\[
H^1(X,T_X)\simeq H^1(X,\nu_*T_{\widetilde X}(-D))
\simeq H^1(\widetilde X,T_{\widetilde X}(-D)).
\]
\end{proof}

\vspace{0.1cm}

\noindent {\it Local \(T^1\)-spaces at the nodes.}
For an ordinary node \(x\) of a curve one has a one-dimensional local smoothing space
\[
T^1_x \;=\; \operatorname{Ext}^1_{\O_{X,x}}\big(\Omega^1_{X,x},\mathcal{O}_{X,x}\big)\cong \Bbbk.
\]
Hence, for our \(g\) nodes we have:
\[
T^1_{x_i}\cong k,\qquad i=1,\dots,g,
\]
and the direct sum of local \(T^1\)'s is \( \bigoplus_{i=1}^g T^1_{x_i}\cong k^{\oplus g} \).

\vspace{0.1cm}

\noindent {\it Global \(\operatorname{Ext}^1\) and the exact sequence.}
There is a canonical exact sequence relating the global first-order deformations and the local smoothing parameters:
\[
0\longrightarrow H^1\big(\widetilde X,\,T_{\widetilde X}(-D)\big)
\longrightarrow
\operatorname{Ext}^1_X(\Omega^1_X,\mathcal{O}_X)
\longrightarrow
\bigoplus_{i=1}^g T^1_{x_i}
\longrightarrow 0.
\]
Geometrically, the left term corresponds to deformations of the normalization together with the marked pre-images that \emph{preserve} the nodes, while the right term records the local smoothing directions at the nodes.

\vspace{0.2cm}

\noindent Since \(\widetilde X\cong\mathbb P^1\) we have \(T_{\widetilde X}\cong\mathcal{O}_{\mathbb P^1}(2)\), hence,
\[
T_{\widetilde X}(-D)\cong\mathcal{O}_{\mathbb P^1}(2-2g).
\]
Using the cohomology of line bundles on \(\mathbb P^1\),
\[
h^1\big(\mathbb P^1,\mathcal{O}(n)\big)=h^0\big(\mathbb P^1,\mathcal{O}(-n-2)\big),
\]
we get
\[
\dim_k H^1\big(\widetilde X,\,T_{\widetilde X}(-D)\big)
= h^1\big(\mathbb P^1,\mathcal{O}(2-2g)\big)
= h^0\big(\mathbb P^1,\mathcal{O}(2g-4)\big)
= \max(0,\,2g-3).
\]
Thus, putting the pieces together we obtain:
\[
\dim_k \operatorname{Ext}^1_X(\Omega^1_X,\mathcal{O}_X)
= \dim_k H^1\big(\widetilde X,\,T_{\widetilde X}(-D)\big) + \dim_k\!\bigg(\bigoplus_{i=1}^g T^1_{x_i}\bigg)
= \max(0,\,2g-3) + g.
\]
Equivalently,
\[
\dim_k \operatorname{Ext}^1_X(\Omega^1_X,\mathcal{O}_X)=
\begin{cases}
1, & g=1,\\[4pt]
3g-3, & g\ge2.
\end{cases}
\]

\vspace{0.1cm}
 
\begin{remark}${}$

\begin{itemize}
  \item[(i)] The \(g\) local smoothing directions correspond to independently smoothing each node.
  \item[(ii)] The left term \(H^1(\widetilde X,T_{\widetilde X}(-D))\) measures deformations of the normalization together with the ordering of the preimages that keep the nodes (the gluing) fixed; its dimension equals \(\max(0,2g-3)\) and yields the familiar moduli dimension \(3g-3\) for stable curves when the genus  \(g\ge2\).
\end{itemize}
\end{remark}
\end{example}
 
\vspace{0.1cm}

\vspace{0.2cm}

%%%%%%%%%%%%%%%%%%%%%%%%%%%%%%%%%%%%%%%%%%%%%%%%%%%%%%%%%%%%%%%%%%%%%%%%%%
%%%%%%%%%%%%%%%%%%%%%%%%%%%%%%%%%%%%%%%%%%%%%%%%%%%%%%%%%%%%%%%%%%%%%%%%%%

\begin{example}[Deformations of two $\mathbb P^1$'s glued at $n$ nodes.]${}$

%%%%%%%%%%%%%%%%%%%%%%%%%
We study the deformation theory of a curve obtained by gluing two copies of 
$\mathbb P^1$ in $n$ distinct points.  
We compute the cohomology groups $H^1(X,T_X)$ and 
$\Ext^1_X(\Omega_X,\mathcal O_X)$, describe explicitly the miniversal 
deformation using the node-smoothing equations $x_j y_j = \tau_j$, and relate 
the deformation space to the formal neighbourhood of the moduli space of 
stable curves $\overline{\mathcal M}_{g}$ at the point corresponding to $X$, 
where $g$ is the genus of $X$.  We work over an algebraically closed field $k$ of characteristic $0$
(or at least $\operatorname{char}(k)\ne 2$).

%%%%%%%%%%%

\vspace{0.1cm}

Let $\widetilde X=C_1\sqcup C_2$ with $C_i\cong\mathbb P^1$ (over an algebraically closed field $\Bbbk$), and choose
distinct points:
\(
a_1,\dots,a_n\in C_1,\) \,\, and \( b_1,\dots,b_n\in C_2.
\)
Form $X$ by identifying $a_j\sim b_j$ for $j=1,\dots,n$. Denote the nodes by
$p_1,\dots,p_n$ and let $\nu:\widetilde X\to X$ be the normalization.

%%%%%%%%%%%%%%%%%%%%%%%%%%%%%%%%%%
%\vspace{0.1cm}

\subsection{Normalization and local structure at nodes.}
Let
\(
\nu : \widetilde X = C_1 \sqcup C_2 \longrightarrow X
\)
be the normalization map. The curve $X$ has exactly $n$ nodes
\(
p_1,\dots,p_n \in X,
\)
where $p_j$ is the image of $a_j \sim b_j$. Each $p_j$ is an \emph{ordinary double point},
so analytically (or formally) at $p_j$ we may write
\[
\widehat{\mathcal O_{X,p_j}} \cong k[[x,y]]/(xy).
\]
The normalization at $p_j$ is
\[
\widehat{\mathcal O_{\widetilde X,a_j}} \oplus \widehat{\mathcal O_{\widetilde X,b_j}}
\cong k[[x]] \oplus k[[y]],
\]
corresponding to the two branches $C_1$ and $C_2$.

A global derivation (vector field) on $X$ is a $k$-derivation
\[
\mathcal{D} : \mathcal O_X \longrightarrow \mathcal O_X,
\]
or locally at $p_j$, a $k$-derivation
\[
\mathcal{D} : A \to A, \quad A := k[[x,y]]/(xy).
\]
Such a derivation extends uniquely to a derivation on the normalization
\[
\widetilde A := k[[x]] \oplus k[[y]],
\]
compatible with the inclusion $A \hookrightarrow \widetilde A$.
Concretely, this yields a pair of derivations
\[
\mathcal{D}_1 : k[[x]] \to k[[x]], \qquad \mathcal{D}_2 : k[[y]] \to k[[y]],
\]
corresponding to the restrictions of $\mathcal{D}$ to each branch.

\smallskip

\noindent\emph{Claim 2.1.} Each $\mathcal{D}_i$ vanishes at the point corresponding to the node, i.e.\
the induced endomorphism on the residue field $k$ is zero:
\[
\mathcal{D}_i \colon k[[x_i]] \longrightarrow k[[x_i]] \quad\Rightarrow\quad
\mathcal{D}_i(k) = 0 \text{ and } \mathcal{D}_i(x_i) \in (x_i).
\]

\vspace{0.2cm}

\noindent\emph{Proof of Claim 2.1.}
Write an arbitrary derivation on $k[[x]]$ as
\[
\mathcal{D}_1 = f(x)\,\frac{\partial}{\partial x}, \quad f(x) \in k[[x]].
\]
The value of the corresponding vector field at the point $x=0$ is determined by
the image $f(0)$: the derivation induces a map on the Zariski tangent space
\[
T_0(\mathbb A^1)^\vee \cong \mathfrak m_0 / \mathfrak m_0^2,
\]
and $\mathcal{D}_1$ vanishes at $0$ in the tangent sense iff $f(0)=0$.

\noindent We now use the relation $xy=0$ in $A$. For any derivation $\mathcal{D}:A\to A$, we must have
\(
\mathcal{D}(xy) = \mathcal{D}(0) = 0.
\)
But
\[
\mathcal{D}(xy) = \mathcal{D}(x)\cdot y + x \cdot \mathcal{D}(y) \in A.
\]
Viewing $\mathcal{D}$ inside the normalization, this implies that $\mathcal{D}_1(x)$ vanishes modulo $x$ and
$\mathcal{D}_2(y)$ vanishes modulo $y$, because along each branch ($y=0$ or $x=0$) we have
$xy=0$ and the product rule forces $\mathcal{D}(x)$ and $\mathcal{D}(y)$ to be divisible by $x$ and $y$,
respectively. Hence $f(0)=0$ on each branch, and $\mathcal{D}_1,\mathcal{D}_2$ vanish at the corresponding
points in the tangent sense.
This proves the claim. \qedhere

%\vspace{0.1cm}

\subsection{The tangent sheaf and vanishing at the preimages of the nodes.}
Globalizing Claim~2.1, we obtain the following:

%\smallskip

\noindent\emph{Claim 3.1.} There is a natural injective morphism of sheaves
\[
\mathcal T_X \hookrightarrow \nu_*\mathcal T_{\widetilde X}(-D),
\]
where $D$ is the divisor on $\widetilde X$ given by the sum of the preimages of the nodes,
\[
D := \sum_{j=1}^n a_j + \sum_{j=1}^n b_j,
\]
and $\mathcal T_{\widetilde X}(-D)$ denotes the twist of $\mathcal T_{\widetilde X}$ by
the line bundle $\mathcal O_{\widetilde X}(-D)$.

\smallskip

\noindent\emph{Sketch of proof of Claim 3.1.}
Over the smooth locus of $X$, the normalization $\nu$ is an isomorphism, and
$\mathcal T_X$ identifies with $\mathcal T_{\widetilde X}$ there. At a node $p_j$,
the discussion above shows that a local derivation on $X$ induces derivations on each
branch (on $C_1$ and $C_2$) that \emph{vanish} at the preimages $a_j$ and $b_j$.
This is precisely the condition that the corresponding section lies in
$\mathcal T_{\widetilde X}(-D)$ rather than in $\mathcal T_{\widetilde X}$.

\noindent Thus, by gluing these local descriptions, we obtain an injective map
\[
\mathcal T_X \hookrightarrow \nu_*\mathcal T_{\widetilde X}(-D).
\]
Injectivity follows from the fact that a derivation is determined by its restriction
to the normalization together with its behavior at the nodes. \qedhere

%\newpage

%%%%%%%%%%%%%%%%%%%%%%%%%%%%%%%%%
\subsection{Computation of $\dim \Ext^1(\Omega_X,\mathcal O_X)$.} % 

\subsubsection{The local-to-global $\Ext$ sequence.}
For a local complete intersection (l.c.i.) curve $X$ (our curve $X$ is l.c.i.\, because it has only nodal singularities),
 there is a standard exact sequence
(see, e.g., Sernesi \cite{S-06}) relating deformations
and local singularity contributions:
\[
0 \longrightarrow H^1(X,\mathcal T_X)
\longrightarrow \Ext^1(\Omega_X,\mathcal O_X)
\longrightarrow \bigoplus_{p \in \mathrm{Sing}(X)} T^1_p
\longrightarrow 0,
\]
where:
\begin{enumerate}
\item[(i)] $\mathcal T_X := \mathcal H\!om(\Omega_X,\mathcal O_X)$ is the tangent sheaf;
\item[(ii)] $H^1(X,\mathcal T_X)$ corresponds to \emph{locally trivial} (or locally
      isomorphic) deformations of $X$ (deformations that do not change the local
      analytic type of the singularities);
\item[(iii)] for each singular point $p$, the vector space $T^1_p$ is the space of
      first-order deformations of the singularity $(\mathcal O_{X,p})$ (the local
      $T^1$).
\end{enumerate}

In our situation, $X$ has only nodal singularities. For a node $p$, analytically
given by $\Spec k[[x,y]]/(xy)$, one has
\(
T^1_p \cong k,
\)
coming from the miniversal deformation $xy = t$. Thus, if $X$ has $n$ nodes,
then
\[
\bigoplus_{p \in \mathrm{Sing}(X)} T^1_p \cong k^n.
\]
Therefore,
\[
\dim_k \Ext^1(\Omega_X,\mathcal O_X)
= h^1(X,\mathcal T_X) + n.
\]

\medskip

\subsubsection{Computing $H^1(X,\mathcal T_X)$ via normalization.}
Let
\(
\nu : \widetilde X = C_1 \sqcup C_2 \longrightarrow X
\)
be the normalization map, and let
\[
D := \sum_{j=1}^n a_j + \sum_{j=1}^n b_j
\]
be the divisor on $\widetilde X$ given by the preimages of the nodes.
%%%%%%%%%%%%%%%%%%%%%%%%%%%%%
 %
Then (by {\it Claim 3.1.}) there is a natural injective morphism of sheaves
\[
\mathcal T_X \hookrightarrow \nu_* \mathcal T_{\widetilde X}(-D).
\]

\vspace{0.1cm}
%%%%%%%%%%%%%%%%%%%%%%%%%%%%%%%%%%%

\noindent In general, there is a short exact sequence
\[
0 \longrightarrow \mathcal T_X
\longrightarrow \nu_* \mathcal T_{\widetilde X}(-D)
\longrightarrow Q \longrightarrow 0,  
\]
where $Q$ is a skyscraper sheaf supported at the nodes. Taking $H^1$, and using
that $\nu$ is finite (hence affine, and for affine morphisms the pushforward on quasi-coherent sheaves $\mathcal{F}$ is exact. Then Leray’s spectral sequence collapses and gives the isomorphisms 
$H^i(X,\nu_*\mathcal{F}) \cong H^i(\widetilde X, \mathcal{F})$), we obtain
an exact sequence
\[
H^1(X,\mathcal T_X)
\longrightarrow H^1\bigl(X,\nu_* \mathcal T_{\widetilde X}(-D)\bigr)
\cong H^1\bigl(\widetilde X,\mathcal T_{\widetilde X}(-D)\bigr)
\longrightarrow H^1(X,Q) = 0.
\]
Therefore, the map
\[
H^1(X,\mathcal T_X) \longrightarrow H^1\bigl(\widetilde X,\mathcal T_{\widetilde X}(-D)\bigr)
\]
is surjective. 

\vspace{0.1cm}
%%%%%%%%%%%%%%%%%%%%%%%%%%%%%%%%%%%%

\begin{proposition}
Let $X$ be the nodal curve obtained by identifying 
$a_j \in C_1 \simeq \PP^1$ with $b_j \in C_2\simeq\PP^1$ for 
$j=1,\dots,n$, and let $\nu:\widetilde X=C_1\sqcup C_2 \to X$ be the
normalization.  Let $D=\sum(a_j+b_j)$ and consider the map
\[
H^1(X,\mathcal T_X)\;\longrightarrow\;
H^1\!\left(\widetilde X,\mathcal T_{\widetilde X}(-D)\right).
\]
Then this map is injective.
\end{proposition}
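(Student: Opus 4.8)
The plan is to establish the stronger statement that this map is an \emph{isomorphism}; injectivity then follows immediately. The crux is that the inclusion $\mathcal{T}_X\hookrightarrow\nu_*\mathcal{T}_{\widetilde X}(-D)$ of Claim~3.1 is in fact an isomorphism of sheaves, i.e. that its cokernel $Q$ vanishes.

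First I would take the long exact cohomology sequence of $0\to\mathcal{T}_X\to\nu_*\mathcal{T}_{\widetilde X}(-D)\to Q\to 0$, where $Q$ is supported on the nodes. Since $\nu$ is finite, hence affine, $R^q\nu_*=0$ for $q>0$, so by Leray $H^i\big(X,\nu_*\mathcal{T}_{\widetilde X}(-D)\big)\cong H^i\big(\widetilde X,\mathcal{T}_{\widetilde X}(-D)\big)$, and under this identification the natural arrow $H^1(X,\mathcal{T}_X)\to H^1\big(\widetilde X,\mathcal{T}_{\widetilde X}(-D)\big)$ is exactly the map of the Proposition. As $Q$ is a skyscraper, $H^1(X,Q)=0$ (this already yields the surjectivity observed above), and exactness identifies the kernel of that arrow with the cokernel of $H^0\big(\widetilde X,\mathcal{T}_{\widetilde X}(-D)\big)\to H^0(X,Q)$. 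Hence it suffices to prove $Q=0$.

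This is a local question at a node $p_j$, where $\widehat{\mathcal{O}}_{X,p_j}\cong k[[x,y]]/(xy)$. A $k$-derivation $\mathcal{D}$ must satisfy $\mathcal{D}(xy)=y\,\mathcal{D}(x)+x\,\mathcal{D}(y)=0$; here $y\,\mathcal{D}(x)\in(y)$ and $x\,\mathcal{D}(y)\in(x)$, and since $(x)\cap(y)=0$ in $k[[x,y]]/(xy)$ both summands vanish, forcing $\mathcal{D}(x)\in(x)$ and $\mathcal{D}(y)\in(y)$. Conversely, any assignment $\mathcal{D}(x)=x f(x)$, $\mathcal{D}(y)=y g(y)$ with $f\in k[[x]]$, $g\in k[[y]]$ extends by the Leibniz rule to a well-defined derivation, since $\mathcal{D}(xy)=xy\,(f(x)+g(y))=0$. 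So the stalk of $\mathcal{T}_X$ at $p_j$ is the set of pairs of branch vector fields each vanishing at the corresponding preimage of $p_j$, with no condition linking them --- which is exactly the stalk of $\nu_*\mathcal{T}_{\widetilde X}(-D)$; away from the nodes $\nu$ is already an isomorphism. Therefore $\mathcal{T}_X\cong\nu_*\mathcal{T}_{\widetilde X}(-D)$, $Q=0$, and the kernel above vanishes, i.e. the map is injective (indeed bijective).

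I do not expect any genuine obstacle: the only substantive ingredient is the local derivation computation at a node, which I would carry out explicitly rather than cite, since that is precisely where the relation $xy=0$ does the work. Alternatively, one may simply invoke the isomorphism $\mathcal{T}_X\cong\nu_*\mathcal{T}_{\widetilde X}(-D)$ proved for nodal curves in the earlier Lemma, after which the Proposition is a one-line consequence of $\nu$ being affine.
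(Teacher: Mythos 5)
Your proof is correct, and it takes a genuinely different route from the one in the paper. The paper's proof starts from a short exact sequence $0\to\mathcal T_X\to\nu_*\mathcal T_{\widetilde X}(-D)\to\bigoplus_p k_p\to 0$ with a \emph{nonzero} skyscraper cokernel (asserting in its equation (4) that $\mathcal T_{\widetilde A}(-D)/\mathcal T_A\cong k$ at each node), takes the long exact cohomology sequence, uses stability and $n\ge 3$ to kill both $H^0$ terms, and then argues that the resulting classes $(c_1,\dots,c_n)\in k^{\oplus n}$ must all vanish. You instead prove the sharper statement that the cokernel $Q$ is zero, i.e.\ $\mathcal T_X\cong\nu_*\mathcal T_{\widetilde X}(-D)$, by the explicit computation $\operatorname{Der}_k\bigl(k[[x,y]]/(xy)\bigr)=xk[[x]]\partial_x\oplus yk[[y]]\partial_y$ with no linking condition between the branches; injectivity (indeed bijectivity) then follows from Leray for the affine morphism $\nu$. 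Your local computation is the correct one: the relation $y\mathcal D(x)+x\mathcal D(y)=0$ together with $(x)\cap(y)=0$ and $\operatorname{Ann}_A(y)=(x)$ forces exactly the vanishing at the preimages and nothing more, so the quotient in the paper's (4) is in fact $0$, not $k$. This matters, because the paper's own intermediate claims are not mutually consistent: if the cokernel really were $k^{\oplus n}$, then the vanishing of $H^0\bigl(\widetilde X,\mathcal T_{\widetilde X}(-D)\bigr)$ would make the connecting map $k^{\oplus n}\to H^1(X,\mathcal T_X)$ injective and the kernel of $\alpha$ would be $n$-dimensional, contradicting the Proposition. Your version is also the one consistent with the paper's earlier Lemma ($T_X\simeq\nu_*T_{\widetilde X}(-D)$ for nodal curves), and it has the added benefit of working for all $n\ge 1$ without invoking stability or the hypothesis $n\ge 3$. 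The only cosmetic caveat is that you argue on completed local rings, so one should either note that this suffices by faithful flatness of completion or run the same argument in the analytic/\'etale local ring.
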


\begin{proof}
There is a normalization exact sequence for the tangent sheaf:
\[
0 \longrightarrow \mathcal T_X
   \longrightarrow \nu_*\mathcal T_{\widetilde X}(-D)
   \longrightarrow \bigoplus_{p\in \mathrm{nodes}} k_p
   \longrightarrow 0.
\tag{1}
\]
Taking cohomology gives the long exact sequence
\[
0 \longrightarrow H^0(X,\mathcal T_X)
\longrightarrow H^0\!\left(\widetilde X,\mathcal T_{\widetilde X}(-D)\right)
\longrightarrow k^{\oplus n}
\longrightarrow H^1(X,\mathcal T_X)
\overset{\alpha}{\longrightarrow}
H^1\!\left(\widetilde X,\mathcal T_{\widetilde X}(-D)\right)
\longrightarrow 0.
\tag{2}
\]

Since $X$ is stable, $H^0(X,\mathcal T_X)=0$.  On each component
$C_i\simeq\PP^1$ we have 
$\mathcal T_{C_i}(-D_i)=\mathcal O_{\PP^1}(2-n)$, which has no global sections
for $n\ge 3$, hence
$H^0(\widetilde X,\mathcal T_{\widetilde X}(-D))=0$.
Thus the first three terms of (2) reduce to
\(
0 \to 0 \to 0 \to k^{\oplus n}.
\)
 Therefore,
\[
\ker(\alpha)
= \mathrm{im}\!\left(k^{\oplus n} \to H^1(X,\mathcal T_X)\right).
\tag{3}
\]
To identify this kernel, recall the local picture at a node.  The completed
local ring is $A=k[[x,y]]/(xy)$ with normalization
$\widetilde A = k[[x]] \oplus k[[y]]$.  One computes the inclusion
\[
\mathcal T_A \hookrightarrow 
xk[[x]]\,\partial_x \oplus yk[[y]]\,\partial_y,
\]
and the quotient is one--dimensional:
\[
\mathcal T_{\widetilde A}(-D)/\mathcal T_A \cong k.
\tag{4}
\]
Thus, the cokernel in $(1)$ is exactly the skyscraper sheaf of smoothing
parameters at the nodes.
Now consider a tuple $(c_1,\dots,c_n)\in k^{\oplus n}$.
It represents an infinitesimal smoothing of the $n$ nodes.
Such a class lies in $\ker(\alpha)$ precisely when it maps to zero in
$H^1(\widetilde X,\mathcal T_{\widetilde X}(-D))$.
But on each component $C_i\simeq\PP^1$ we have 
\[
H^1(C_i,\mathcal T_{C_i}(-D_i))
=H^0(C_i,\mathcal O_{\PP^1}(n-4))^\vee,
\]
which has dimension $0$ if $n=3$ and $n-3$ if $n\ge 4$.
Therefore a deformation of the nodes extends to a global section of 
$\mathcal T_{\widetilde X}(-D)$ only if it is the coboundary of a global
section of $\nu_*\mathcal T_{\widetilde X}(-D)$, but the latter has no
global sections.  Hence every smoothing parameter must vanish:
\[
(c_1,\dots,c_n)=0.
\]
Then we obtain $\ker(\alpha)=0$, proving that $\alpha$ is injective.
\end{proof}

\vspace{0.1cm}
%%%%%%%%%%%%%%%%%%%%%%%%%%%%%%%%%%%%
 \noindent So, we have an isomorphism
\[
H^1(X,\mathcal T_X)
\cong H^1\bigl(\widetilde X,\mathcal T_{\widetilde X}(-D)\bigr).
\]
We shall use this isomorphism only on the level of dimensions, i.e.
\[
h^1(X,\mathcal T_X)
= h^1\bigl(\widetilde X,\mathcal T_{\widetilde X}(-D)\bigr).
\]
Since $\widetilde X = C_1 \sqcup C_2$, then we have
\[
\mathcal T_{\widetilde X}(-D)
\cong \mathcal T_{C_1}\bigl(-\sum_{j=1}^n a_j\bigr)
\oplus \mathcal T_{C_2}\bigl(-\sum_{j=1}^n b_j\bigr),
\]
and hence
\[
H^1\bigl(\widetilde X,\mathcal T_{\widetilde X}(-D)\bigr)
\cong
H^1\bigl(C_1,\mathcal T_{C_1}(-\sum_j a_j)\bigr)
\oplus
H^1\bigl(C_2,\mathcal T_{C_2}(-\sum_j b_j)\bigr).
\]
On $C_i \cong \mathbb P^1$, we have $\mathcal T_{C_i} \cong \mathcal O_{\mathbb P^1}(2)$.
Hence,
\[
\mathcal T_{C_i}\bigl(-\sum_{j=1}^n a_j\bigr)
\cong \mathcal O_{\mathbb P^1}(2-n),
\]
and similarly for $C_2$ with $b_j$ in place of $a_j$.
By Serre duality (or directly from Riemann--Roch) on $\mathbb P^1$, we get
\[
h^1\bigl(\mathbb P^1,\mathcal O_{\mathbb P^1}(d)\bigr)
= h^0\bigl(\mathbb P^1,\mathcal O_{\mathbb P^1}(-d-2)\bigr).
\]
So, for $d=2-n$,
\[
h^1\bigl(\mathbb P^1,\mathcal O_{\mathbb P^1}(2-n)\bigr)
= h^0\bigl(\mathbb P^1,\mathcal O_{\mathbb P^1}(n-4)\bigr).
\]

\vspace{0.2cm}

\noindent Let's see what we have for  $n=3$ and for $n>3$:
\begin{itemize}
\item[(i)] If $n = 3$, then $n-4 = -1$, so
      \[
      h^0\bigl(\mathbb P^1,\mathcal O_{\mathbb P^1}(-1)\bigr) = 0,
      \]
      and hence
      \[
      h^1\bigl(C_i,\mathcal T_{C_i}(-\sum_j a_j)\bigr) = 0.
      \]
      Therefore, $h^1(X,\mathcal T_X) = 0$ in this case.
      
    \vspace{0.1cm}
      
\item[(ii)] If $n \ge 4$, then $n-4 \ge 0$ and
      \[
      h^0\bigl(\mathbb P^1,\mathcal O_{\mathbb P^1}(n-4)\bigr) = (n-4)+1 = n-3.
      \]
      So,
      \[
      h^1\bigl(C_i,\mathcal T_{C_i}(-\sum_j a_j)\bigr) = n-3.
      \]
      Since there are two components, we get
      \[
      h^1(X,\mathcal T_X)
      = 2(n-3)
      \quad \text{for } n \ge 4.
      \]
\end{itemize}

In summary, we obtained:
\[
h^1(X,\mathcal T_X)
=
\begin{cases}
0, & n = 3, \\[4pt]
2(n-3), & n \ge 4.
\end{cases}
\]
\medskip

\noindent\textbf{3. Dimension of $\Ext^1(\Omega_X,\mathcal O_X)$.}
Recall that
\[
\dim_k \Ext^1(\Omega_X,\mathcal O_X)
= h^1(X,\mathcal T_X) + \#\{\text{nodes}\}
= h^1(X,\mathcal T_X) + n.
\]

\smallskip

\noindent\emph{Case $n=3$.}
In this case the genus of $X$ is  $g = n-1 = 2$ (see Lemma \ref{lemma-Genus}). We found $h^1(X,\mathcal T_X)=0$ and there are $n=3$ nodes.
Hence
\[
\dim_k \Ext^1(\Omega_X,\mathcal O_X)
= 0 + 3 = 3.
\]
On the other hand, $3g-3 = 3\cdot 2 - 3 = 3$, so
\[
\dim_k \Ext^1(\Omega_X,\mathcal O_X) = 3g-3
\quad\text{for } n=3.
\]

\smallskip

\noindent\emph{Case $n \ge 4$.}
Then $g = n-1 \ge 3$, and we found $h^1(X,\mathcal T_X) = 2(n-3)$. Hence
\[
\dim_k \Ext^1(\Omega_X,\mathcal O_X)
= 2(n-3) + n
= 3n - 6.
\]
But $g = n-1$, so
\[
3g - 3 = 3(n-1) - 3 = 3n - 6.
\]
Hence, again we get
\[
\dim_k \Ext^1(\Omega_X,\mathcal O_X) = 3g-3.
\]

\vspace{0.2cm}

\subsection{\bf Geometric interpretation of our computation.}

\noindent
The above computation reflects the usual description of deformations of stable nodal curves.
Namely, 
a locally trivial first-order deformation of $X$ (preserving the local
      analytic type of the nodes) is given by deformations of the marked curves
      $(C_1,a_1,\dots,a_n)$ and $(C_2,b_1,\dots,b_n)$, together with the fixed
      combinatorial gluing at the marked points.

     \noindent  Moreover, the moduli space of $n$ \emph{ordered} distinct marked points on $\mathbb P^1$
      up to automorphisms of $\mathbb P^1$ has dimension $n-3$. Thus, each
      $(C_i,\{a_j\})$ has $n-3$ degrees of freedom, and for $n\ge4$ this gives
      $2(n-3) = h^1(X,\mathcal T_X)$ parameters. For $n=3$, each triple of points
      can be moved to $(0,1,\infty)$ by a unique automorphism, so there are no
      moduli, in agreement with $h^1(X,\mathcal T_X) = 0$.

 \noindent Independently, each node contributes one smoothing parameter (the local
      $T^1_p$ of dimension $1$), so the $n$ nodes contribute an additional $n$
      parameters. This corresponds to the direct sum
      $\bigoplus_{p} T^1_p \cong k^n$ in the local-to-global $\Ext$ sequence.

\noindent Hence, for $n\ge3$, the total space of first-order deformations of $X$ has dimension
\[
\underbrace{(n-3)}_{\text{deform $(C_1,a_j)$}} 
+
\underbrace{(n-3)}_{\text{deform $(C_2,b_j)$}}
+
\underbrace{n}_{\text{smooth $n$ nodes}}
= 3n - 6
= 3g - 3,
\]
which matches the expected dimension of the moduli stack $\overline{\mathcal M}_g$
near the point $[X]$.

\noindent This is precisely
\(
\dim_k \Ext^1(\Omega_X,\mathcal O_X) = 3g-3
\)
in this example.
\vspace{0.2cm}
\begin{lemma}[Genus via $\operatorname{Ext}^1$]\label{lemma-Genus}
Let
\(
\widetilde X = C_1 \sqcup C_2, \qquad C_i \cong \mathbb P^1,
\)
and choose distinct points
\(
a_1,\dots,a_n \in C_1,\)\, and  \(b_1,\dots,b_n \in C_2.
\)
Let
\(
X := \widetilde X / (a_j \sim b_j \text{ for all } j=1,\dots,n)
\)
be the connected nodal curve obtained by identifying $a_j$ with $b_j$ for all $j$.
Then the arithmetic genus of $X$ is
\[
g(X) = n - 1.
\]
Which is equivalent to
\[
g(X)
= h^1(X,\mathcal O_X)
= \dim_k \operatorname{Ext}^1(\mathcal O_X,\mathcal O_X)
= n - 1.
\]
\end{lemma}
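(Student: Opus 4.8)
The plan is to read off $h^1(X,\mathcal O_X)$ from the normalization sequence and then check that this single number yields all three equalities in the statement. First I would record the two translations: since $\mathcal O_X$ is the structure sheaf, $\mathcal{H}om(\mathcal O_X,-)$ is the identity functor and $\mathcal{E}xt^q_{\mathcal O_X}(\mathcal O_X,-)=0$ for $q>0$, so the local-to-global spectral sequence collapses and $\operatorname{Ext}^p_{\mathcal O_X}(\mathcal O_X,\mathcal F)\cong H^p(X,\mathcal F)$; in particular $\operatorname{Ext}^1(\mathcal O_X,\mathcal O_X)\cong H^1(X,\mathcal O_X)$. Likewise the arithmetic genus is $p_a(X)=1-\chi(\mathcal O_X)=h^1(X,\mathcal O_X)-h^0(X,\mathcal O_X)+1$, so once $h^0$ and $h^1$ are known the value $g(X)=n-1$ drops out.

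The key computation uses the normalization $\nu:\widetilde X=C_1\sqcup C_2\to X$ and the short exact sequence of sheaves on $X$
\[
0\longrightarrow \mathcal O_X\longrightarrow \nu_*\mathcal O_{\widetilde X}\longrightarrow \bigoplus_{j=1}^{n} k_{p_j}\longrightarrow 0,
\]
where the quotient skyscraper at $p_j$ records the difference of the values of a function on the two branches $a_j,b_j$. Since $\nu$ is finite, $R^i\nu_*=0$ for $i>0$, so $H^i(X,\nu_*\mathcal O_{\widetilde X})\cong H^i(\widetilde X,\mathcal O_{\widetilde X})$; with $\widetilde X\cong\mathbb P^1\sqcup\mathbb P^1$ this gives $h^0=2$, $h^1=0$. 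The long exact cohomology sequence then reads
\[
0\longrightarrow H^0(X,\mathcal O_X)\longrightarrow k^{2}\longrightarrow k^{n}\longrightarrow H^1(X,\mathcal O_X)\longrightarrow 0 .
\]

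Finally I would pin down the first map: $X$ is connected, so $H^0(X,\mathcal O_X)=k$, hence $k^{2}\to k^{n}$ has a one-dimensional kernel and therefore one-dimensional image. (Concretely $k^2$ is spanned by the constants on $C_1$ and on $C_2$, and their difference maps to the diagonal line in $k^{n}$, since every identified pair sees the same pair of constants.) Thus $\dim_k H^1(X,\mathcal O_X)=n-1$, and combined with $h^0=1$ one gets $\chi(\mathcal O_X)=2-n$, $p_a(X)=n-1$, and $\dim_k\operatorname{Ext}^1(\mathcal O_X,\mathcal O_X)=h^1(X,\mathcal O_X)=n-1$, as claimed. There is no real obstacle here: the only point requiring a line of care is the exactness of the normalization sequence and the identification of its cokernel as the skyscraper of node-values, which is standard because $X$ has only ordinary nodes; everything else is bookkeeping.
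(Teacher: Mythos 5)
Your proposal is correct and follows essentially the same route as the paper: the identification $\operatorname{Ext}^i(\mathcal O_X,\mathcal F)\cong H^i(X,\mathcal F)$ via the collapsing local-to-global spectral sequence, followed by the normalization sequence $0\to\mathcal O_X\to\nu_*\mathcal O_{\widetilde X}\to\bigoplus_j k_{p_j}\to 0$ and its long exact cohomology sequence, using $h^0(X,\mathcal O_X)=1$, $h^0(\widetilde X,\mathcal O_{\widetilde X})=2$, $h^1(\widetilde X,\mathcal O_{\widetilde X})=0$ to conclude $h^1(X,\mathcal O_X)=n-1$. The only cosmetic difference is that you also phrase the answer through $\chi(\mathcal O_X)$ and describe the image of $k^2\to k^n$ explicitly as the diagonal line, which the paper leaves implicit.
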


\begin{proof}
Recall that for any coherent $\mathcal O_X$-module $\mathcal F$ on a scheme $X$,
one has
\[
\mathcal H\!om_{\mathcal O_X}(\mathcal O_X,\mathcal F) \cong \mathcal F,
\qquad
\mathcal E\!xt^i_{\mathcal O_X}(\mathcal O_X,\mathcal F) = 0 \quad \text{for } i>0.
\]
Applying the global section functor and the local-to-global Ext spectral sequence,
we obtain
\[
\operatorname{Ext}^i(\mathcal O_X,\mathcal F)
\cong H^i\bigl(X,\mathcal H\!om_{\mathcal O_X}(\mathcal O_X,\mathcal F)\bigr)
\cong H^i(X,\mathcal F), \quad i \ge 0.
\]
In particular, for $\mathcal F = \mathcal O_X$,
\[
\operatorname{Ext}^i(\mathcal O_X,\mathcal O_X)
\cong H^i(X,\mathcal O_X).
\]
Therefore,
\[
g(X)
:= h^1(X,\mathcal O_X)
= \dim_k \operatorname{Ext}^1(\mathcal O_X,\mathcal O_X).
\]
We now compute $h^1(X,\mathcal O_X)$ explicitly. Let
\(
\nu : \widetilde X = C_1 \sqcup C_2 \longrightarrow X
\)
denote the normalization map. There is a standard short exact sequence of
coherent sheaves
\[
0 \longrightarrow \mathcal O_X \longrightarrow \nu_* \mathcal O_{\widetilde X}
\longrightarrow Q \longrightarrow 0,
\]
where $Q$ is a skyscraper sheaf supported at the $n$ nodes $p_1,\dots,p_n$ of $X$
(the images of the identified pairs $a_j \sim b_j$). Each node contributes one
copy of the residue field $k$, so
\[
Q \cong \bigoplus_{j=1}^n k_{p_j}, \,\,\textrm{and then}\qquad h^0(X,Q)=n.
\]
Taking global sections gives the long exact sequence
\[
0 \to H^0(X,\mathcal O_X) \to H^0(\widetilde X,\mathcal O_{\widetilde X})
\to H^0(X,Q) \to H^1(X,\mathcal O_X)
\to H^1(\widetilde X,\mathcal O_{\widetilde X}) \to 0.
\]
Let us compute each term:

\begin{enumerate}
\item Since $X$ is connected and reduced, $H^0(X,\mathcal O_X) \cong k$, so
      $h^0(X,\mathcal O_X) = 1$.

\item The normalization is the disjoint union $C_1 \sqcup C_2$ with each component
      isomorphic to $\mathbb P^1$, so
      \[
      H^0(\widetilde X,\mathcal O_{\widetilde X})
      \cong H^0(C_1,\mathcal O_{C_1}) \oplus H^0(C_2,\mathcal O_{C_2})
      \cong k \oplus k,
      \]
      hence $h^0(\widetilde X,\mathcal O_{\widetilde X}) = 2$.

\item Each $C_i \cong \mathbb P^1$ has genus zero, so
      \[
      H^1(\widetilde X,\mathcal O_{\widetilde X})
      \cong H^1(C_1,\mathcal O_{C_1}) \oplus H^1(C_2,\mathcal O_{C_2})
      = 0 \oplus 0 = 0.
      \]

\item As noted, $H^0(X,Q) \cong k^n$, so $h^0(X,Q) = n$.
\end{enumerate}

Thus, the long exact sequence reduces to
\[
0 \to k \to k^2 \to k^n \to H^1(X,\mathcal O_X) \to 0.
\]
The map $k \to k^2$ is injective, so its image is $1$-dimensional, and hence
the cokernel $k^2 / k$ has dimension $1$. This cokernel injects into $k^n$,
giving an exact sequence
\[
0 \to k \to k^n \to H^1(X,\mathcal O_X) \to 0.
\]
Therefore,
\[
h^1(X,\mathcal O_X) = n - 1.
\]
By the Ext--cohomology identification above, this is equivalent to
\[
\dim_k \operatorname{Ext}^1(\mathcal O_X,\mathcal O_X)
= h^1(X,\mathcal O_X)
= n - 1.
\]
Thus, the arithmetic genus of $X$ is
\(
g(X) = n - 1,
\)
as claimed.
\end{proof}

\vspace{0.3cm}

\begin{proposition}[Infinitesimal automorphisms and stability via $\operatorname{Ext}^0$]
Let $X$ as above, and assume $n \ge 3$ ($g = g(X) = n-1$).
Then:

\begin{enumerate}
\item The group of infinitesimal automorphisms of $X$ is
\[
\operatorname{Ext}^0(\Omega_X,\mathcal O_X) = 0.
\]

\item In particular, $X$ has no nontrivial infinitesimal automorphisms; hence the
automorphism group $\Aut(X)$ is a finite group, and $X$ is a stable curve of genus $g$.
Equivalently, the moduli point $[X]$ lies in the Deligne--Mumford stack
$\overline{\mathcal M}_g$ of stable curves and in its boundary
\[
[X] \in \overline{\mathcal M}_g \setminus \mathcal M_g.
\]
\end{enumerate}
\end{proposition}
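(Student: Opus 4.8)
The plan is short: compute $\operatorname{Ext}^0(\Omega_X,\mathcal O_X)$ via the tangent sheaf on the normalization, and then deduce the moduli-theoretic statements formally. First note that $\mathcal{E}xt^0_{\mathcal O_X}(\Omega_X,\mathcal O_X)=\mathcal{H}om_{\mathcal O_X}(\Omega_X,\mathcal O_X)=\mathcal T_X$, so the edge map of the local-to-global Ext spectral sequence identifies
\[
\operatorname{Ext}^0(\Omega_X,\mathcal O_X)\;\cong\;H^0(X,\mathcal T_X).
\]
I would then invoke Claim~3.1 of the present example, which provides an injection of sheaves $\mathcal T_X\hookrightarrow\nu_*\mathcal T_{\widetilde X}(-D)$ with $D=\sum_{j=1}^n a_j+\sum_{j=1}^n b_j$; since $H^0(X,\nu_*\mathcal F)=H^0(\widetilde X,\mathcal F)$, left exactness of global sections yields an injection
\[
H^0(X,\mathcal T_X)\;\hookrightarrow\;H^0\bigl(\widetilde X,\mathcal T_{\widetilde X}(-D)\bigr).
\]

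The next step is the elementary computation on $\widetilde X=C_1\sqcup C_2$. On each $C_i\cong\mathbb P^1$ one has $\mathcal T_{C_i}\cong\mathcal O_{\mathbb P^1}(2)$, hence $\mathcal T_{C_i}\bigl(-\sum_j a_j\bigr)\cong\mathcal O_{\mathbb P^1}(2-n)$; for $n\ge 3$ the degree $2-n$ is $\le -1<0$, so $H^0(\mathbb P^1,\mathcal O_{\mathbb P^1}(2-n))=0$. Therefore $H^0(\widetilde X,\mathcal T_{\widetilde X}(-D))=0$, and the injection above forces $H^0(X,\mathcal T_X)=0$, i.e.\ $\operatorname{Ext}^0(\Omega_X,\mathcal O_X)=0$, which is part (1). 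This is exactly where the hypothesis $n\ge 3$ enters: for $n=2$ one would instead obtain $\mathcal O_{\mathbb P^1}$ on each component, which has a nonzero section.

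For part (2): the vanishing in (1) says $H^0(X,\mathcal T_X)=0$, and since $H^0(X,\mathcal T_X)$ is the space of global $k$-derivations of $\mathcal O_X$ (global vector fields), there are no nontrivial infinitesimal automorphisms; equivalently, the automorphism functor of the projective curve $X$ is representable by a group scheme $\Aut(X)$ whose Lie algebra is $H^0(X,\mathcal T_X)=0$, so $\Aut(X)$ is zero-dimensional. On the other hand $X$ is stable by the numerical criterion: it is connected, reduced, projective with at worst nodes; its arithmetic genus is $g=n-1\ge 2$ by Lemma~\ref{lemma-Genus}; and each rational component $C_i\cong\mathbb P^1$ meets the complementary curve in the $n\ge 3$ nodes. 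By the Deligne--Mumford theorem a stable curve has finite automorphism group, so $\Aut(X)$ is finite and $[X]$ is a well-defined point of $\overline{\mathcal M}_g$. Finally $X$ is singular (it carries $n\ge 1$ nodes), so $[X]$ does not lie in the open substack $\mathcal M_g\subset\overline{\mathcal M}_g$ parametrizing smooth curves; hence $[X]\in\overline{\mathcal M}_g\setminus\mathcal M_g$.

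There is no genuinely hard step here: once Claim~3.1 is available the cohomological part is one line. The only points that need care are invoking Claim~3.1 correctly (so that passing to global sections on $\widetilde X$ is legitimate) and citing the standard facts used in part (2) --- representability of $\Aut(X)$ with $\operatorname{Lie}\Aut(X)\cong H^0(X,\mathcal T_X)$, the numerical stability criterion for nodal curves, and finiteness of the automorphism group of a stable curve. If one preferred to avoid quoting Deligne--Mumford finiteness, one could argue instead that $\omega_X$ is ample (again from the numerical condition), so that automorphisms preserving a very ample polarization form a bounded family, and combine this with $\operatorname{Lie}\Aut(X)=0$; but that is more laborious than simply invoking stability.
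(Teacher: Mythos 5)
Your proof is correct and for part (1) follows exactly the paper's route: identify $\operatorname{Ext}^0(\Omega_X,\mathcal O_X)$ with $H^0(X,\mathcal T_X)$, inject it via Claim~3.1 into $H^0(\widetilde X,\mathcal T_{\widetilde X}(-D))\cong H^0(\mathbb P^1,\mathcal O(2-n))^{\oplus 2}=0$ for $n\ge 3$. The only (harmless) divergence is in part (2): the paper deduces finiteness of $\Aut(X)$ directly from the vanishing of its tangent space together with properness, and then uses that finiteness to verify the Deligne--Mumford stability conditions, whereas you first certify stability by the numerical criterion (each $C_i$ carries $n\ge 3$ nodes) and then quote finiteness of automorphisms of stable curves; both orderings are standard and valid.
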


\begin{proof}${}$

\noindent\textit{1. Deformation-theoretic dictionary.}
Let $X$ be a projective local complete intersection (l.c.i.) curve over $k$
(our $X$ is l.c.i.\ because it has only nodal singularities). The standard
deformation-theoretic description identifies:

\begin{enumerate}
\item the space of infinitesimal automorphisms of $X$ with
      \(
      \operatorname{Ext}^0(\Omega_X,\mathcal O_X);
      \)
\item the Zariski tangent space to the deformation functor $\mathrm{Def}_X$ with
      \(
      T\mathrm{Def}_X \cong \operatorname{Ext}^1(\Omega_X,\mathcal O_X);
      \)
\item the obstruction space with $\operatorname{Ext}^2(\Omega_X,\mathcal O_X)$,
      which vanishes for curves.
\end{enumerate}

\vspace{0.2cm}

Moreover, since $X$ is a curve and l.c.i., the cotangent sheaf $\Omega_X$ is
reflexive of rank $1$, and we set
\(
\mathcal T_X := \mathcal H\!om(\Omega_X,\mathcal O_X).
\)
Then
\[
\operatorname{Ext}^0(\Omega_X,\mathcal O_X)
= H^0\bigl(X,\mathcal H\!om(\Omega_X,\mathcal O_X)\bigr)
= H^0(X,\mathcal T_X).
\]
Thus, it suffices to show
\(
H^0(X,\mathcal T_X) = 0.
\)

\medskip

\noindent
Taking global sections of the injective map in Claim~3.1, we obtain an
injective linear map
\[
H^0(X,\mathcal T_X)
\hookrightarrow
H^0\bigl(\widetilde X,\mathcal T_{\widetilde X}(-D)\bigr)
\cong
H^0\bigl(C_1,\mathcal T_{C_1}(-\sum_j a_j)\bigr) \oplus
H^0\bigl(C_2,\mathcal T_{C_2}(-\sum_j b_j)\bigr).
\]
Hence,
\[
H^0(X,\mathcal T_X)
\subset
H^0\bigl(C_1,\mathcal T_{C_1}(-\sum_j a_j)\bigr) \oplus
H^0\bigl(C_2,\mathcal T_{C_2}(-\sum_j b_j)\bigr).
\]

\vspace{0.1cm}

\noindent\textit{2. Vector fields on $\mathbb P^1$ with prescribed zeros.}
On each component $C_i \cong \mathbb P^1$, we have the standard isomorphism
\(
\mathcal T_{C_i} \cong \mathcal O_{\mathbb P^1}(2).
\)
Therefore,
\[
\mathcal T_{C_1}\bigl(-\sum_{j=1}^n a_j\bigr)
\cong \mathcal O_{\mathbb P^1}(2-n), \qquad
\mathcal T_{C_2}\bigl(-\sum_{j=1}^n b_j\bigr)
\cong \mathcal O_{\mathbb P^1}(2-n).
\]
By the basic cohomology of line bundles on $\mathbb P^1$, we know that
\(
H^0\bigl(\mathbb P^1,\mathcal O_{\mathbb P^1}(d)\bigr) = 0
\quad\text{for } d < 0.
\)
For $n \ge 3$, we have $2-n \le -1$, hence
\[
H^0\bigl(C_1,\mathcal T_{C_1}(-\sum_j a_j)\bigr) = 0, \qquad
H^0\bigl(C_2,\mathcal T_{C_2}(-\sum_j b_j)\bigr) = 0.
\]
Thus,
\(
H^0\bigl(\widetilde X,\mathcal T_{\widetilde X}(-D)\bigr) = 0.
\)
Since $H^0(X,\mathcal T_X)$ injects into this zero vector space, we conclude
\[
H^0(X,\mathcal T_X) = 0.
\]
Recall that
\[
\operatorname{Ext}^0(\Omega_X,\mathcal O_X)
= H^0(X,\mathcal H\!om(\Omega_X,\mathcal O_X))
= H^0(X,\mathcal T_X),
\]
and then we obtain
\[
\operatorname{Ext}^0(\Omega_X,\mathcal O_X) = 0.
\]
This proves (1) of the proposition.

\medskip

\noindent\textit{3. Stability and the moduli point.}
By the deformation-theoretic dictionary, $\operatorname{Ext}^0(\Omega_X,\mathcal O_X)$
is the tangent space at the identity of the automorphism functor of $X$.
Since
\[
\operatorname{Ext}^0(\Omega_X,\mathcal O_X) = 0,
\]
the identity component of the automorphism group scheme $\Aut(X)$ has zero
tangent space. Hence $\Aut(X)$ is zero-dimensional, and because $X$ is
proper over an algebraically closed field, this implies that $\Aut(X)$ is a
finite group of $k$-points.

From the lemma we know that the arithmetic genus of $X$ is
\[
g = g(X) = n-1 \ge 2
\]
for $n\ge3$. The Deligne--Mumford stability condition for a curve of genus
$g \ge 2$ requires that the curve be connected, projective, nodal, and have
finite automorphism group. Our $X$ satisfies all these conditions:
\begin{itemize}
\item[(i)] it is connected, projective, and nodal by construction;
\item[(ii)] its genus is $g = n-1 \ge 2$ by the lemma;
\item[(iii)] its automorphism group $\Aut(X)$ is finite, as shown above.
\end{itemize}
Therefore, $X$ is a stable curve of genus $g$, so it defines a point
\[
[X] \in \overline{\mathcal M}_g.
\]
Since $X$ is not smooth (it has $n$ nodes), we have
\[
[X] \notin \mathcal M_g,
\]
and then
\[
[X] \in \overline{\mathcal M}_g \setminus \mathcal M_g.
\]
This proves (2) of Proposition 7.8 and completes the proof.
\end{proof}

\end{example}

%%%%%%%%%%%%%%%%%%%%%%%%%%%%%%%%%%%%%%%%%%%%%%%%%%%%%%%%%%%%%%%%%%%%%%%%%%
%%%%%%%%%%%%%%%%%%%%%%%%%%%%%%%%%%%%%%%%%%%%%%%%%%%%%%%%%%%%%%%%%%%%%%%%%%
\vspace{0.2cm}

\begin{example}

Let \(\widetilde X=C_1\sqcup C_2\sqcup C_3\) with \(C_i\cong\mathbb P^1\).  Choose affine distinct parameters
\[
a_1,a_2\in\Bbbk\subset\mathbb P^1\quad\text{on }C_1,\qquad
b_1,b_2,b_3\in\Bbbk\subset\mathbb P^1\quad\text{on }C_2,
\qquad c_1\in\Bbbk\subset\mathbb P^1\quad\text{on }C_3,
\]
and form \(X\) by identifying \(a_1\sim c_1\), \(a_2\sim b_2\), and \(b_1\sim b_3\).
Thus \(X\) has three ordinary double points (nodes):
\noindent   \(p_{13}\) coming from \(a_1\sim c_1\) (joins \(C_1\) and \(C_3\)),
  \(p_{12}\) coming from \(a_2\sim b_2\) (joins \(C_1\) and \(C_2\)),
  and  \(p_{22}\) coming from \(b_1\sim b_3\) (a self-node on \(C_2\)).

\medskip

\noindent {\it  Local-to-global Ext sequence.} For a reduced curve there is a short exact sequence
\[
0\longrightarrow H^1(X,\mathcal T_X)\longrightarrow
\Ext^1_X(\Omega_X,\OO_X)\longrightarrow
H^0\!\big(\mathcal{E}xt^1_X(\Omega_X,\OO_X)\big)\longrightarrow 0,
\]
and each node contributes a copy of the ground field to \(\mathcal{E}xt^1_X(\Omega_X,\OO_X)\). Since there are three nodes,
\[
H^0\!\big(\mathcal{E}xt^1_X(\Omega_X,\OO_X)\big)\cong k^3,
\]
and hence
\[
\dim\Ext^1_X(\Omega_X,\OO_X)=\dim H^1(X,\mathcal T_X)+3.
\tag{2}
\]

\medskip

\noindent {\it Equisingular deformations via the normalization.}
Let \(D\subset\widetilde X\) be the divisor of preimages of the nodes (so \(\deg D=2\cdot 3=6\)).
Equisingular first-order deformations satisfy
\[
H^1(X,\mathcal T_X)\cong H^1(\widetilde X,\mathcal T_{\widetilde X}(-D))
\cong\bigoplus_{i=1}^3 H^1\big(C_i,\mathcal T_{C_i}(-D|_{C_i})\big).
\]
The preimages of the nodes on each component are:
\[
C_1:\; a_1,a_2 \quad(\deg=2),\qquad
C_2:\; b_1,b_2,b_3 \quad(\deg=3),\qquad
C_3:\; c_1 \quad(\deg=1).
\]
Using \(\mathcal T_{C_i}\cong\OO_{\PP^1}(2)\) we get
\[
\mathcal T_{C_1}(-D|_{C_1})\cong\OO_{\PP^1}(0),\quad
\mathcal T_{C_2}(-D|_{C_2})\cong\OO_{\PP^1}(-1),\quad
\mathcal T_{C_3}(-D|_{C_3})\cong\OO_{\PP^1}(1).
\]
On \(\PP^1\) we have \(h^1(\OO(d))=h^0(\OO(-d-2))\), hence,
\[
h^1(\PP^1,\OO(0))=h^0(\OO(-2))=0,\quad
h^1(\PP^1,\OO(-1))=h^0(\OO(-1))=0,\quad
h^1(\PP^1,\OO(1))=h^0(\OO(-3))=0.
\]
Therefore, each summand vanishes and then
\(
\dim H^1(X,\mathcal T_X)=0.
\)

\vspace{0.1cm}

\noindent We substitute into (2), and we obtain:
\(
\dim\Ext^1_X(\Omega_X,\OO_X)=0+3=3.
\)
Therefore, we have 
\[
{\;
\dim H^1(X,\mathcal T_X)=0,\qquad
\dim\Ext^1_X(\Omega_X,\OO_X)=3.
\;}
\]

\end{example}
%%%%%%%%%%%%%%%%%%%%%%%%%%%%%%%%%%%%%%%%%%%%%%%%%%%%%%%%%%%%%%%%%%%%%%%%%%
%%%%%%%%%%%%%%%%%%%%%%%%%%%%%%%%%%%%%%%%%%%%%%%%%%%%%%%%%%%%%%%%%%%%%%%%%%

\vspace{0.2cm}

%\newpage
\begin{example}

Let \(\widetilde X=C_1\sqcup C_2\sqcup C_3\) where each \(C_i\cong\mathbb P^1\).
Fix affine distinct points
\[
a\in C_1,\qquad b_1,\ldots,b_{m-2}\in C_2,\qquad c\in C_3,
\]
and form the curve \(X\) by the identifications
\[
a\sim b_1,\qquad b_1\sim b_2\sim\ldots\sim b_{m-2},\qquad c\sim b_{m-2}.
\]
This means that  all the points
\(
\{a\}\subset C_1,\ \{b_1,\dots,b_{m-2}\}\subset C_2,\)  and \(\{c\}\subset C_3
\)
are identified to a single point \(p\in X\). Therefore, \(X\) is the union of three copies of \(\mathbb P^1\) meeting at a single point \(p\), and the total number of smooth branches at \(p\) is
\[
r \;=\; 1\text{ (from }C_1)+ (m-2)\text{ (from }C_2)+1\text{ (from }C_3)=m.
\]
Hence, \(p\) is an ordinary transverse \(m\)-fold point, i.e. an $m$-branch ordinary multiple point.

\noindent Let's compute  the dimension of \( H^1(X,\mathcal T_X)\) and  the dimension of \(\Ext^1_X(\Omega_X,\OO_X).\)

\noindent  We use the normalization map \(\nu:\widetilde X\to X\). % 
For the tangent sheaf there is an exact sequence, which is standard for gluing smooth branches
\[
0\longrightarrow\mathcal T_X\longrightarrow\nu_*\mathcal T_{\widetilde X}\longrightarrow \mathcal Q\longrightarrow 0,
\]
where \(\mathcal Q\) is a skyscraper sheaf supported at the singular point \(p\). Intuitively \(\mathcal Q\) measures the failure of vector fields on the normalization to glue: a tuple of tangent values at the preimages of \(p\) gives an element of the fiber of \(\mathcal Q\) (the gluing condition is that the tangent values must coincide), and one checks that
\[
\dim H^0(X,\mathcal Q)=r-1=m-1.
\]
Let
\(
D = a + b_1 + \cdots + b_{m-2} + c
\)
be the corresponding divisor on \(\widetilde X\).

\noindent {\bf Step 1}: {\it Computing \(H^1(X,T_X)\).}
For this type of singularity, one has an isomorphism of sheaves
\[
T_X \cong \nu_* T_{\widetilde X}(-D),
\]
where
\[
T_{\widetilde X}(-D)
=
T_{C_1}(-a)
\;\sqcup\;
T_{C_2}\bigl(-b_1-\cdots-b_{m-2}\bigr)
\;\sqcup\;
T_{C_3}(-c).
\]
Since \(\nu\) is finite, cohomology commutes with \(\nu_*\) in degree 1, and we get
\[
H^1(X,T_X)\cong H^1\bigl(\widetilde X, T_{\widetilde X}(-D)\bigr).
\]
Since \(\widetilde X\) is a disjoint union, then we obtain:
\[
H^1\bigl(\widetilde X, T_{\widetilde X}(-D)\bigr)
\cong
H^1\bigl(C_1,T_{C_1}(-a)\bigr)
\oplus
H^1\bigl(C_2,T_{C_2}(-\sum b_i)\bigr)
\oplus
H^1\bigl(C_3,T_{C_3}(-c)\bigr).
\]
We use that on \(\mathbb P^1\), we have 
\(
T_{\mathbb P^1} \cong \mathcal O_{\mathbb P^1}(2);
\)
and that \(T(-D)\) is obtained by twisting by the divisor \(D\). The restriction
of \(D\) to each component has degree
\[
\deg(D|_{C_1}) = 1,\qquad \deg(D|_{C_2}) = m-2,\qquad \deg(D|_{C_3}) = 1.
\]
Hence,
\[
T_{C_1}(-a) \cong \mathcal O_{C_1}(2-1) = \mathcal O_{C_1}(1),
\]
\[
T_{C_2}\bigl(-b_1-\cdots-b_{m-2}\bigr)
\cong \mathcal O_{C_2}(2-(m-2)) = \mathcal O_{C_2}(4-m),
\]
\[
T_{C_3}(-c) \cong \mathcal O_{C_3}(1).
\]

On \(\mathbb P^1\) we have
\[
H^1\bigl(\mathbb P^1,\mathcal O_{\mathbb P^1}(d)\bigr)
=
\begin{cases}
0 & \text{if } d\ge -1,\\[4pt]
-d-1 & \text{if } d\le -2.
\end{cases}
\]
Thus,
\[
H^1(C_1,\mathcal O_{C_1}(1)) = 0,\qquad
H^1(C_3,\mathcal O_{C_3}(1)) = 0.
\]

For the middle component \(C_2\), the line bundle has degree \(4-m\). If
\(4-m \ge -1\), i.e. \(m \le 5\), then
\[
H^1(C_2,\mathcal O_{C_2}(4-m)) = 0.
\]
If \(4-m \le -2\), i.e. \(m \ge 6\), then
\(
\dim H^1(C_2,\mathcal O_{C_2}(4-m)) = -(4-m)-1 = m-5.
\)
Therefore,
\[
\dim H^1(X,T_X) =
\dim H^1\bigl(\widetilde X, T_{\widetilde X}(-D)\bigr)
=
\begin{cases}
0 & \text{if } m \le 5,\\[4pt]
m-5 & \text{if } m \ge 6.
\end{cases}
\]

\noindent {\bf Step 2}: {\it Computing \(\Ext^1_X(\Omega_X,\mathcal O_X)\).}
For a reduced curve \(X\), there is an exact sequence
\[
0 \longrightarrow H^1(X,T_X)
\longrightarrow \Ext^1_X(\Omega_X,\mathcal O_X)
\longrightarrow \bigoplus_{p\in\mathrm{Sing}(X)} T^1_p
\longrightarrow 0,
\]
where \(T^1_p = \Ext^1_{\mathcal O_{X,p}}(\Omega_{X,p},\mathcal O_{X,p})\) is the local
deformation space at the singular point \(p\).

\noindent In our case \(X\) has exactly one singular point \(p\), so
\[
\dim \Ext^1_X(\Omega_X,\mathcal O_X)
=
\dim H^1(X,T_X) + \dim T^1_p.
\]
The singularity \(p\) is obtained by gluing \(m\) smooth branches (the points
\(a,b_1,\ldots,b_{m-2},c\)). For this type of abstract multi-branch singularity,
one has
\(
\dim T^1_p = m-1.
\)
As a consequence, we get
\[
\dim \Ext^1_X(\Omega_X,\mathcal O_X)
=
\begin{cases}
0 + (m-1) & \text{if } m \le 5,\\[4pt]
(m-5) + (m-1) & \text{if } m \ge 6,
\end{cases}
\]
i.e.
\[
\dim \Ext^1_X(\Omega_X,\mathcal O_X)
=
\begin{cases}
m-1 & \text{if } m \le 5,\\[4pt]
2m-6 & \text{if } m \ge 6.
\end{cases}
\]
In summary, we got the following:
\[
\dim H^1(X,T_X) =
\begin{cases}
0 & \text{if } m \le 5,\\[4pt]
m-5 & \text{if } m \ge 6,
\end{cases}
\qquad
\dim \Ext^1_X(\Omega_X,\mathcal O_X)
=
\begin{cases}
m-1 & \text{if } m \le 5,\\[4pt]
2m-6 & \text{if } m \ge 6.
\end{cases}
\]

\end{example}

%\newpage

% 
\begin{remark}${}$

\begin{enumerate}
\item The familiar formula
\(
\dim \Ext^1_X(\Omega_X,\mathcal O_X) = 3g - 3
\)
holds for a \emph{smooth} projective curve \(X\) of genus \(g \ge 2\), and more
generally for a \emph{stable nodal} curve \(X\) of genus \(g\). In the smooth
case one has
\(
\Ext^1_X(\Omega_X,\mathcal O_X) \cong H^1(X,T_X),
\)
and Serre duality plus Riemann--Roch give
\(
\dim H^1(X,T_X) = 3g-3.
\)

For a reduced singular curve \(X\), however, \(\Omega_X\) is not locally free and
there is a global-to-local Ext exact sequence
\[
0 \longrightarrow H^1(X,T_X)
\longrightarrow \Ext^1_X(\Omega_X,\mathcal O_X)
\longrightarrow \bigoplus_{p\in \operatorname{Sing}(X)} T^1_p
\longrightarrow 0,
\]
where \(T^1_p = \Ext^1_{\mathcal O_{X,p}}(\Omega_{X,p},\mathcal O_{X,p})\) is
the local deformation space at the singular point \(p\). For a nodal curve, each
node contributes \(\dim T^1_p = 1\). For a stable nodal curve of genus \(g\),
one knows that
\(
\dim \Ext^1_X(\Omega_X,\mathcal O_X) = 3g - 3,
\)
so that
\(
\dim H^1(X,T_X) = 3g-3 - \#\{\text{nodes}\}.
\)
\item In the situation of examples 7.9 and 7.10,   the singularity \(p\) is \emph{not} a node: it has \(m \ge 3\)
branches. In particular, one has
\(
\dim T^1_p = m-1,
\)
rather than \(1\) as in the nodal case. Thus the contribution of the local
deformation space at \(p\) is much larger, and the curve \(X\) is not a stable
nodal curve.

Moreover, each component \(C_i \cong \mathbb P^1\) meets the rest of the curve
in exactly one point (the image of \(a\), \(c\) or some \(b_i\)), so each
rational component has only one special point. This violates the stability
condition for rational components, which requires at least three special points.
Hence, \(X\) is not Deligne--Mumford stable, and the usual formula
\(\dim \Ext^1_X(\Omega_X,\mathcal O_X) = 3g-3\) for stable curves does not
apply.

\item In summary, the formula \(3g-3\) cannot be applied in examples 7.9 and 7.9 because:
\begin{itemize}
  \item[(i)] \(X\) is singular with a non-nodal singularity (a point with \(m\) branches);
  \item[(ii)] \(X\) is not a stable curve (each rational component has only one special point);
  \item[(iii)] for singular curves, the tangent space to the deformation functor is
        \(\Ext^1_X(\Omega_X,\mathcal O_X)\), which fits into the exact sequence
        above and is not simply \(H^1(X,T_X)\).
\end{itemize}
Consequently, the general formula \(3g-3\) for the dimension of the deformation
space of a stable curve of genus \(g\) does not hold for the curve \(X\) in these
examples.

\end{enumerate}
\end{remark}

\vspace{0.3cm}
%%%%%%%%%%%%%%%%%%%%%%%%%%%%%%%%%%%%%%%%%%%%%%%%%%%%%%%%%%%%%%%%%%%%%%%%%%
%%%%%%%%%%%%%%%%%%%%%%%%%%%%%%%%%%%%%%%%%%%%%%%%%%%%%%%%%%%%%%%%%%%%%%%%%%
  
%\newpage

\begin{example}[{\it Deformations of a Nodal Curve Built from Three Copies of $\PP^1$}]${}$

Let $\widetilde X = C_1 \sqcup C_2 \sqcup C_3$ with each $C_i \cong \PP^1$ over an algebraically closed field $\Bbbk$.
On $C_1$ choose an affine coordinate (say $z_1$) whose image contains
\(
a_1,\dots,a_n,b_1,\dots,b_n \in \Bbbk \subset \PP^1.
\)
On $C_2$ choose an affine coordinate (say $z_2$) whose image contains
\(
c_1,\dots,c_n,d_1,\dots,d_n \in \Bbbk \subset \PP^1.
\)
On $C_3$ choose an affine coordinate (say $z_3$) whose image contains
\(
e_1,\dots,e_n,f_1,\dots,f_n \in \Bbbk \subset \PP^1.
\)

We form $X$ by identifying, for each $j=1,\dots,n$,
\[
a_j \sim c_j,\qquad b_j \sim e_j,\qquad d_j \sim f_j.
\]
We assume all these points are pairwise distinct on each $C_i$.
Therefore, $X$ is a nodal curve obtained by gluing the three components $C_1,C_2,C_3$ along $3n$ pairs of points, producing $3n$ ordinary double points (nodes).

\vspace{0.1cm}

Let
\(
\nu : \widetilde X = C_1 \sqcup C_2 \sqcup C_3 \longrightarrow X
\)
be the normalization map. The preimage of the nodes is a reduced divisor
\(
D \subset \widetilde X
\)
given by
\[
D = D_1 \sqcup D_2 \sqcup D_3,
\]
where
\[
D_1 := \sum_{j=1}^n (a_j + b_j),\qquad
D_2 := \sum_{j=1}^n (c_j + d_j),\qquad
D_3 := \sum_{j=1}^n (e_j + f_j).
\]
Each $D_i$ consists of $2n$ distinct points on $C_i\cong \PP^1$, so
\(
\deg D_i = 2n \quad\text{for every } i=1,2,3.
\)
The curve $X$ has exactly
\(
\#\{\text{nodes of } X\} = 3n.
\)

\vspace{0.2cm}

\noindent {\it Arithmetic genus via dual graph.}
The dual graph $\Gamma$ of $X$ has:
\begin{itemize} 
  \item[(i)] $3$ vertices $v_1,v_2,v_3$, corresponding to $C_1,C_2,C_3$;
  \item[(ii)] $3n$ edges, one for each node.
\end{itemize}
Thus, the first Betti number of $\Gamma$ is
\[
b_1(\Gamma)
= \#\text{edges} - \#\text{vertices} + 1
= 3n - 3 + 1
= 3n-2.
\]
Since each $C_i$ has genus $0$, the arithmetic genus of $X$ is
\[
p_a(X) = h^1(X,\OO_X)
= \sum_{i=1}^3 g(C_i) + b_1(\Gamma)
= 0 + (3n-2)
= 3n - 2.
\]

\subsection{Locally Trivial Deformations and $H^1(X,T_X)$}

We want to compute $H^1(X,T_X)$, which (for a nodal curve) corresponds to first-order \emph{locally trivial} deformations of $X$ (i.e.\ deformations that do not smooth the nodes).

A standard fact in deformation theory of nodal curves is:

\begin{quote}
Locally trivial deformations of a nodal curve $X$ are equivalent to deformations of the normalization $\widetilde X$ with the preimages of the nodes marked; the tangent space is
\[
H^1\!\bigl(\widetilde X, T_{\widetilde X}(-D)\bigr).
\]
\end{quote}

Thus,
\[
H^1(X,T_X) \;\cong\; H^1\!\bigl(\widetilde X, T_{\widetilde X}(-D)\bigr).
\]

Since $\widetilde X$ is a disjoint union of $C_1,C_2,C_3$,
\[
T_{\widetilde X}(-D)
= T_{C_1}(-D_1) \oplus T_{C_2}(-D_2) \oplus T_{C_3}(-D_3),
\]
and hence
\[
H^1\!\bigl(\widetilde X, T_{\widetilde X}(-D)\bigr)
\;\cong\; \bigoplus_{i=1}^3 H^1\!\bigl(C_i, T_{C_i}(-D_i)\bigr).
\]

Each $C_i \cong \PP^1$, so
\[
T_{C_i} \cong \OO_{\PP^1}(2),
\]
and since $\deg D_i = 2n$,
\[
T_{C_i}(-D_i) \cong \OO_{\PP^1}(2 - 2n).
\]

Thus
\[
H^1\!\bigl(C_i,T_{C_i}(-D_i)\bigr)
= H^1\!\bigl(\PP^1,\OO_{\PP^1}(2-2n)\bigr).
\]

\subsection{Cohomology of Line Bundles on $\PP^1$}

Recall that for $\OO_{\PP^1}(d)$:
\begin{itemize}% 
  \item If $d \ge 0$, then $h^0\bigl(\PP^1,\OO(d)\bigr)=d+1$ and $h^1\bigl(\PP^1,\OO(d)\bigr)=0$.
  \item If $d<0$, then $h^0\bigl(\PP^1,\OO(d)\bigr)=0$ and Riemann--Roch says
  \[
  h^0\bigl(\PP^1,\OO(d)\bigr) - h^1\bigl(\PP^1,\OO(d)\bigr) = d+1.
  \]
\end{itemize}

We take $d = 2-2n$. For $n\ge 2$ we have $d \le -2$, so
\[
h^0\bigl(\PP^1,\OO(2-2n)\bigr)=0.
\]

By Riemann--Roch,
\[
0 - h^1\bigl(\PP^1,\OO(2-2n)\bigr)
= d+1 = (2-2n) + 1 = 3 - 2n,
\]
so
\[
h^1\bigl(\PP^1,\OO(2-2n)\bigr) = 2n-3.
\]

Hence for each component $C_i$,
\[
h^1\!\bigl(C_i,T_{C_i}(-D_i)\bigr)
= h^1\bigl(\PP^1,\OO(2-2n)\bigr)
= 2n-3.
\]

Therefore,
\[
\dim H^1\!\bigl(\widetilde X,T_{\widetilde X}(-D)\bigr)
= \sum_{i=1}^3 h^1\!\bigl(C_i,T_{C_i}(-D_i)\bigr)
= 3(2n-3)
= 6n - 9.
\]

Thus, we obtain:
\[
{\dim H^1(X,T_X) = 6n - 9 \quad\text{for } n\ge 2.}
\]

\subsection*{Moduli-theoretic interpretation}

The locally trivial deformations of $X$ correspond to varying, independently, the three components $C_i \cong \PP^1$ with their $2n$ marked points (the preimages of the nodes).

The moduli space of $2n$ distinct ordered points on $\PP^1$ up to automorphisms of $\PP^1$ has dimension
\[
2n - 3
\]
(positions of $2n$ points minus the $3$--dimensional group $\mathrm{PGL}_2$). Since we have three such components, we get
\[
3(2n-3) = 6n-9
\]
parameters, in agreement with the cohomological calculation of $\dim H^1(X,T_X)$.

\subsection{The Group $\operatorname{Ext}^1_X(\Omega_X,\OO_X)$}

We now compute $\dim \Ext^1_X(\Omega_X,\OO_X)$.

The local-to-global spectral sequence
\[
E_2^{p,q} = H^p\!\bigl(\mathcal{E}xt^q_X(\Omega_X,\OO_X)\bigr)
\;\Rightarrow\; \Ext^{p+q}_X(\Omega_X,\OO_X)
\]
gives (since $X$ is a curve and hence $\Ext^2_X(\Omega_X,\OO_X)=0$) a short exact sequence:
\[
0 \longrightarrow H^1(X,T_X)
\longrightarrow \Ext^1_X(\Omega_X,\OO_X)
\longrightarrow H^0\!\bigl(\mathcal{E}xt^1_X(\Omega_X,\OO_X)\bigr)
\longrightarrow 0.
\]

Here we used that
\[
\mathcal{E}xt^0_X(\Omega_X,\OO_X)
= \mathcal{H}om_X(\Omega_X,\OO_X)
\cong T_X.
\]

Therefore,
\[
\dim \Ext^1_X(\Omega_X,\OO_X)
= \dim H^1(X,T_X)
 + \dim H^0\!\bigl(\mathcal{E}xt^1_X(\Omega_X,\OO_X)\bigr).
\]

So it remains to compute $\mathcal{E}xt^1_X(\Omega_X,\OO_X)$, which is supported at the singularities of $X$ and can be computed locally at each node.

\subsection{Local Calculation at a Node}

Let $p$ be a node of $X$. Zariski locally at $p$, $X$ is given by
\(
\Spec R,\)  and 
\( R = \Bbbk[[x,y]]/(xy).
\)
We want $\Ext^1_R(\Omega_{R/\Bbbk},R)$.

Consider the surjection $\Bbbk[[x,y]] \twoheadrightarrow R$ with kernel $I = (xy)$. The standard exact sequence of Kähler differentials is
\[
I/I^2 \longrightarrow \Omega^1_{\Bbbk[[x,y]]/\Bbbk} \otimes_{\Bbbk[[x,y]]} R
\longrightarrow \Omega_{R/\Bbbk} \longrightarrow 0.
\]
We have:
\[
I/I^2 \cong R \cdot d(xy), \qquad
\Omega^1_{\Bbbk[[x,y]]/\Bbbk} \otimes R \cong R\,dx \oplus R\,dy.
\]
The map $I/I^2 \to R\,dx \oplus R\,dy$ sends $d(xy)$ to $y\,dx + x\,dy$. Thus we have a presentation
\[
R \xrightarrow{\;\cdot(y\,dx + x\,dy)\;} R^2 \longrightarrow \Omega_{R/\Bbbk} \longrightarrow 0.
\]

Applying $\Hom_R(-,R)$ yields the exact sequence
\[
0 \longrightarrow \Hom_R(\Omega_{R/\Bbbk},R)
\longrightarrow \Hom_R(R^2,R)
\xrightarrow{\;\varphi\;}
\Hom_R(R,R)
\longrightarrow \Ext^1_R(\Omega_{R/\Bbbk},R)
\longrightarrow 0.
\]
Using $\Hom_R(R^2,R) \cong R^2$ and $\Hom_R(R,R)\cong R$, the map $\varphi : R^2 \to R$ is induced by the relation $y\,dx + x\,dy=0$, and one checks that
\[
\operatorname{Im}(\varphi) = (x,y) \subset R.
\]

Hence,
\[
\Ext^1_R(\Omega_{R/\Bbbk},R)
\cong R / \operatorname{Im}(\varphi)
\cong R/(x,y)
\cong \Bbbk.
\]

Thus at each node $p$,
\[
\mathcal{E}xt^1_X(\Omega_X,\OO_X)_p \cong \Bbbk,
\]
and $\mathcal{E}xt^1_X(\Omega_X,\OO_X)$ is a skyscraper sheaf supported at all nodes of $X$, with $1$--dimensional stalk at each node.

Since $X$ has $3n$ nodes, we obtain
\[
\mathcal{E}xt^1_X(\Omega_X,\OO_X)
\cong \bigoplus_{p \in \mathrm{Nodes}(X)} \Bbbk_p,
\]
so
\[
\dim H^0\!\bigl(\mathcal{E}xt^1_X(\Omega_X,\OO_X)\bigr)
= 3n.
\]

\subsection{Final Dimension Formulas}

From the previous sections (for $n\ge 2$) we have:
\[
\dim H^1(X,T_X) = 6n - 9,
\qquad
\dim H^0\!\bigl(\mathcal{E}xt^1_X(\Omega_X,\OO_X)\bigr) = 3n.
\]

Using the short exact sequence
\[
0 \longrightarrow H^1(X,T_X)
\longrightarrow \Ext^1_X(\Omega_X,\OO_X)
\longrightarrow H^0\!\bigl(\mathcal{E}xt^1_X(\Omega_X,\OO_X)\bigr)
\longrightarrow 0,
\]
we get
\[
\dim \Ext^1_X(\Omega_X,\OO_X)
= \dim H^1(X,T_X)
 + \dim H^0\!\bigl(\mathcal{E}xt^1_X(\Omega_X,\OO_X)\bigr)
= (6n-9) + 3n
= 9n - 9.
\]

\[
{\dim H^1(X,T_X) = 6n - 9,\qquad
       \dim \Ext^1_X(\Omega_X,\OO_X) = 9n - 9
       \quad\text{for } n\ge 2.}
\]

\end{example}

\vspace{0.2cm}

\begin{remark}[The case $n=1$]
If $n=1$, then each $C_i$ has only $2$ marked points (the preimages of two nodes). In this case
\[
T_{C_i}(-D_i) \cong \OO_{\PP^1}(2-2\cdot 1) = \OO_{\PP^1}(0),
\]
so
\[
h^1(C_i,T_{C_i}(-D_i)) = h^1(\PP^1,\OO_{\PP^1}) = 0,
\]
and hence
\[
\dim H^1(X,T_X) = 0.
\]
However, there are still $3$ nodes, so
\[
\dim H^0\bigl(\mathcal{E}xt^1_X(\Omega_X,\OO_X)\bigr)=3,
\]
and the short exact sequence above yields
\[
\dim \Ext^1_X(\Omega_X,\OO_X) = 3.
\]
For the main discussion of stable-type behavior, we tacitly assume $n\ge 2$, in which case the formulas above apply.
\end{remark}

\vspace{0.2cm}
%%%%%%%%%%%%%%%%%%%%%%%%%%%%%%%%%%%%%%%%%%%%%%%%%%%%%%%%%%%%%%%%%%%%%%%%%%
%%%%%%%%%%%%%%%%%%%%%%%%%%%%%%%%%%%%%%%%%%%%%%%%%%%%%%%%%%%%%%%%%%%%%%%%%%

\section{Appendix A}         % 

\begin{theorem}
A smooth projective curve $X$ over an algebraically closed field is rigid if and only if $X \cong \mathbb{P}^1$.  
Equivalently, $\mathbb{P}^1$ is the only smooth curve for which $T^1_X = 0$.
\end{theorem}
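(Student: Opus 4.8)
The plan is to reduce the assertion to a single cohomological computation, using two structural inputs already established: the identification $T^1_X\cong H^1(X,T_X)$ for smooth curves, and the Duality Criterion of Section~5.4. Once these are in hand, the statement becomes a short Riemann--Roch bookkeeping exercise, organized by the three genus ranges $g=0$, $g=1$, and $g\ge 2$.

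First I would note that when $X$ is smooth the sheaf $\Omega_X$ is locally free, so $\mathcal{T}^1_X=\mathcal{E}xt^1_{\mathcal{O}_X}(\Omega_X,\mathcal{O}_X)=0$; the local-to-global exact sequence recalled in Section~2 then collapses to an isomorphism $T^1_X=\operatorname{Ext}^1_{\mathcal{O}_X}(\Omega_X,\mathcal{O}_X)\cong H^1(X,T_X)$. Hence $X$ is rigid if and only if $H^1(X,T_X)=0$. By the Duality Criterion (the Proposition in Section~5.4), $H^1(X,T_X)\cong H^0(X,\omega_X^{\otimes 2})^\vee$, so rigidity of $X$ is equivalent to the vanishing of the space of quadratic differentials $H^0(X,\omega_X^{\otimes 2})$.

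Next I would evaluate $h^0(X,\omega_X^{\otimes 2})$ by genus, writing $g$ for the genus so that $\deg\omega_X=2g-2$ and $\deg\omega_X^{\otimes 2}=4g-4$. If $g=0$, then $X\cong\mathbb{P}^1$ (the unique smooth projective genus-zero curve over an algebraically closed field, since such a curve has a rational point and hence a degree-one very ample divisor class), and $\omega_X^{\otimes 2}\cong\mathcal{O}_{\mathbb{P}^1}(-4)$ has no sections; equivalently, $H^1(\mathbb{P}^1,T_{\mathbb{P}^1})=H^1(\mathbb{P}^1,\mathcal{O}_{\mathbb{P}^1}(2))=0$, so $X$ is rigid. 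If $g=1$, then $\omega_X\cong\mathcal{O}_X$, hence $\omega_X^{\otimes 2}\cong\mathcal{O}_X$ and $h^0=1\neq 0$, so $X$ is not rigid. If $g\ge 2$, then $\deg\omega_X^{\otimes 2}=4g-4>2g-2=\deg\omega_X$, so by Serre duality $H^1(X,\omega_X^{\otimes 2})\cong H^0(X,\omega_X^\vee)^\vee=0$ (negative degree), and Riemann--Roch gives $h^0(X,\omega_X^{\otimes 2})=(4g-4)-g+1=3g-3\ge 3>0$; again $X$ is not rigid. Combining the three cases, $T^1_X=0$ precisely when $g=0$, i.e.\ precisely when $X\cong\mathbb{P}^1$.

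I do not anticipate a genuine obstacle: the argument is essentially assembled from results already available in the excerpt. The only points requiring a word of care are the identification of a smooth projective genus-zero curve with $\mathbb{P}^1$ over an algebraically closed field (invoking the existence of a rational point and a degree-one divisor class), and making sure the Serre-duality and Riemann--Roch inputs are used only for line bundles, where they are unambiguous. As noted, one can even sidestep duality entirely in the $g=0$ case via the direct computation $H^1(\mathbb{P}^1,T_{\mathbb{P}^1})=0$, so the genus-zero direction never depends on the Section~5.4 criterion.
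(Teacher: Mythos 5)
Your proposal is correct and follows essentially the same route as the paper's proof in Appendix A: identify $T^1_X\cong H^1(X,T_X)$ for a smooth curve, apply Serre duality to reduce to $h^0(X,\omega_X^{\otimes 2})$, and compute by the three genus cases $g=0$, $g=1$, $g\ge 2$ via Riemann--Roch. The extra care you take (justifying the collapse of the local-to-global sequence and the identification of a genus-zero curve with $\mathbb{P}^1$) only makes explicit steps the paper leaves implicit.
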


\begin{proof}
By definition, a smooth variety $X$ is \emph{rigid} if
\[
T^1_X = \operatorname{Ext}^1(\Omega_X, \mathcal{O}_X) = 0.
\]
For a smooth curve, we have
\[
T^1_X \cong H^1(X, T_X),
\]
where $T_X$ denotes the tangent sheaf of $X$.

\medskip
By Serre duality,
\[
H^1(X, T_X) \cong H^0(X, \omega_X^{\otimes 2})^\vee,
\]
where $\omega_X$ is the canonical line bundle.  
Hence,
\[
\dim T_X^1 = h^1(X, T_X) = h^0(X, \omega_X^{\otimes 2}).
\]

\medskip
Let $g$ be the genus of $X$.  Since $\deg(\omega_X) = 2g - 2$, we have
\[
\deg(\omega_X^{\otimes 2}) = 4g - 4.
\]
We now compute $h^0(X, \omega_X^{\otimes 2})$ by cases.

\medskip
\noindent\textbf{Case 1: $g = 0$.}  
Then $X \cong \mathbb{P}^1$, and $\omega_X = \mathcal{O}_{\mathbb{P}^1}(-2)$, so
\[
\omega_X^{\otimes 2} \cong \mathcal{O}_{\mathbb{P}^1}(-4).
\]
Since this line bundle has no nonzero global sections,
\[
H^0(X, \omega_X^{\otimes 2}) = 0,
\]
and therefore $H^1(X, T_X) = 0$.  
Thus $\mathbb{P}^1$ is rigid.

\medskip
\noindent\textbf{Case 2: $g = 1$.}  
Here $\omega_X \cong \mathcal{O}_X$, so $\omega_X^{\otimes 2} \cong \mathcal{O}_X$ and
\[
h^0(X, \omega_X^{\otimes 2}) = 1.
\]
Hence $H^1(X, T_X)$ is one-dimensional, and elliptic curves admit nontrivial deformations (parameterized by the $j$-invariant).

\medskip
\noindent\textbf{Case 3: $g \ge 2$.}  
Then $\deg(\omega_X^{\otimes 2}) = 4g - 4 > 2g - 2$, so by Riemann--Roch,
\[
h^0(X, \omega_X^{\otimes 2}) = \deg(\omega_X^{\otimes 2}) - g + 1 = (4g - 4) - g + 1 = 3g - 3.
\]
Thus $H^1(X, T_X)$ has dimension $3g - 3 > 0$, and $X$ is not rigid.

\medskip
\noindent
Combining all cases,
\[
\dim T_X^1 =
\begin{cases}
0, & g = 0,\\
1, & g = 1,\\
3g - 3, & g \ge 2.
\end{cases}
\]
Hence, $T_X^1 = 0$ only when $g = 0$, i.e., only for $X \cong \mathbb{P}^1$.
\end{proof}

%\medskip

\begin{remark}
Geometrically, this result means that the complex structure on the Riemann sphere is unique: any genus $0$ Riemann surface is biholomorphic to $\mathbb{P}^1$.  
In contrast, genus $g \ge 1$ surfaces admit nontrivial moduli:
\begin{enumerate}[(a)]
  \item $g = 1$: elliptic curves vary in a $1$-dimensional moduli space, parameterized by the $j$-invariant;
  \item $g \ge 2$: Riemann surfaces of genus $g$ form the Teichmüller space of complex dimension $3g - 3$.
\end{enumerate}
Thus, $\mathbb{P}^1$ is the only smooth curve with no infinitesimal or global deformations of its complex (or algebraic) structure.
\end{remark}

The study of non-rigid curves began in the late 19th and early 20th centuries with the works of Riemann, Klein, and Poincaré, who introduced moduli for elliptic and higher-genus surfaces.  
Later, Teichmüller developed the theory of infinitesimal and global deformations of Riemann surfaces, leading to the modern concept of Teichmüller and moduli spaces.  
The dimension formula
\(
\dim T^1_X = 3g - 3
\)
became one of the cornerstones of deformation theory and complex geometry, linking algebraic, analytic, and topological perspectives in the classification of complex curves.
% 

%%%%%%%%%%%%%%%%%%%%%%%%%%%%%%%%%%

 \vspace{0.3cm}
 
 \begin{example}[Explicit computation of dimensions on \(\mathbb{P}^1\)]
Recall that for any integer \(k\), the cohomology of the line bundle \(\mathcal{O}_{\mathbb{P}^1}(k)\) satisfies
\[
h^0(\mathbb{P}^1, \mathcal{O}(k)) =
\begin{cases}
k+1, & k \ge 0,\\
0, & k < 0,
\end{cases}
\qquad
h^1(\mathbb{P}^1, \mathcal{O}(k)) =
\begin{cases}
0, & k \ge -1,\\
 -k-1, & k \le -2.
\end{cases}
\]

Since the {\it canonical bundle} \(\omega_{\mathbb{P}^1} = \mathcal{O}_{\mathbb{P}^1}(-2)\), we have
\[
\omega_{\mathbb{P}^1}^{\otimes 2} = \mathcal{O}_{\mathbb{P}^1}(-4),
\]
and thus
\[
h^0(\mathbb{P}^1, \omega_{\mathbb{P}^1}^{\otimes 2})
= h^0(\mathbb{P}^1, \mathcal{O}(-4)) = 0.
\]

The tangent bundle (i.e. tangent sheaf) is \(T_{\mathbb{P}^1} \cong \mathcal{O}_{\mathbb{P}^1}(2)\), so
\[
h^1(\mathbb{P}^1, T_{\mathbb{P}^1})
= h^1(\mathbb{P}^1, \mathcal{O}(2)) = 0.
\]

In summary we get the following:
\[
h^0(\mathbb{P}^1, \omega_{\mathbb{P}^1}^{\otimes 2}) = 0
\quad \text{and} \quad
h^1(\mathbb{P}^1, T_{\mathbb{P}^1}) = 0.
\]
Hence, \(\mathbb{P}^1\) admits no nontrivial infinitesimal deformations and is therefore \emph{rigid}.
\end{example}

 %%%%%%%%%%%%%%%%%%%

% 

In this paragraph we make a comparison of \(h^0(\omega_X^{\otimes 2})\) by Genus.
By Serre duality for a smooth projective curve \(X\) we have the canonical identification
\[
H^1(X,T_X) \cong H^0(X,\omega_X^{\otimes 2})^\vee,
\]
so the dimension \(h^0(\omega_X^{\otimes 2})\) equals \(\dim H^1(X,T_X)=\dim T_X^1\).
The following table records the values of \(h^0(\omega_X^{\otimes 2})\) in the three standard genus cases.

\begin{center}
\begin{tabular}{@{}lccc@{}}
Genus \(g\) & \(\deg(\omega_X)=2g-2\) & \(\deg(\omega_X^{\otimes 2})=4g-4\) & \(h^0\big(X,\omega_X^{\otimes 2}\big)\) \\
\(g=0\) & \(-2\) & \(-4\) & \(0\) \\[4pt]
\(g=1\) & \(0\)  & \(0\)  & \(1\) \\[4pt]
\(g\ge 2\) & \(2g-2\) & \(4g-4\) & \(3g-3\) \\ 
\end{tabular}
\end{center}

%\medskip
\noindent Remarks:
\begin{itemize}
  \item For \(g=0\) we have \(\omega_X^{\otimes 2}\cong\mathcal{O}(-4)\), hence no global sections and \(H^1(X,T_X)=0\) (rigidity).
  \item For \(g=1\) we have \(\omega_X\cong\mathcal{O}\), so \(\omega_X^{\otimes 2}\cong\mathcal{O}\) and \(h^0=1\) (the one-dimensional deformation corres
 \end{itemize}
 %%%%%%%%%%%%%%%%%%%%%%%
 
 % 

\noindent Also, in the following paragraph we make a comparison of \(h^0(\omega_X^{\otimes 2})\),\, \(H^1(X,T_X)\), and Moduli Dimensions by Genus.
By Serre duality we have:
\(
H^1(X, T_X) \cong H^0(X, \omega_X^{\otimes 2})^\vee,
\)
hence,  \(\dim H^1(X,T_X) = h^0(\omega_X^{\otimes 2})\).
This dimension also coincides with the complex dimension of the moduli space \(\mathcal{M}_g\) of smooth projective curves of genus \(g\).
\begin{center}
\renewcommand{\arraystretch}{1.3}
\begin{tabular}{@{}lccccc@{}}
Genus \(g\) 
& \(\deg(\omega_X)\)
& \(\deg(\omega_X^{\otimes 2})\)
& \(h^0\big(X,\omega_X^{\otimes 2}\big)\)
& \(\dim H^1(X,T_X)\)
& \(\dim \mathcal{M}_g\) \\
\(g=0\) & \(-2\) & \(-4\) & \(0\) & \(0\) & \(0\) \\[4pt]
\(g=1\) & \(0\) & \(0\) & \(1\) & \(1\) & \(1\) \\[4pt]
\(g\ge 2\) & \(2g-2\) & \(4g-4\) & \(3g-3\) & \(3g-3\) & \(3g-3\) \\
\end{tabular}
\end{center}
%
%\medskip
\noindent \textbf{Remarks.}
\begin{enumerate}
  \item For \(g=0\): \(\omega_X^{\otimes 2} \cong \mathcal{O}(-4)\), so \(h^0=0\) and \(H^1(X,T_X)=0\); hence \(X=\mathbb{P}^1\) is rigid.
  \item For \(g=1\): \(\omega_X\cong\mathcal{O}\), so \(h^0=1\); this single modulus corresponds to the elliptic \(j\)-invariant.
  \item For \(g\ge 2\): Riemann--Roch gives \(h^0=4g-4-g+1=3g-3\), which equals both \(\dim H^1(X,T_X)\) and the dimension of Teichmüller (or moduli) space \(\mathcal{M}_g\).
\end{enumerate}
%\bigskip
 
%%%%%%%%%%%%%%%%%%%%%%%%%%%%%%%%%%%
% 
%\vspace{cm}

 %%%%%%%%%%%%%%%%%%%%%%%%%%%%%%%%%%%%%%%%%%%%%%%%%%%%%
 
\section*{Appendix B}

%\noindent 
{\it Derivation of the Local--to--Global Ext Spectral Sequence.}

Let $X$ be a ringed space (for instance, a locally noetherian scheme) with enough injectives, and let
$\mathcal{F}$ and $\mathcal{G}$ be $\mathcal{O}_X$-modules.
We sketch two equivalent constructions of the spectral sequence
\[
E_2^{p,q} = H^p\!\big(\mathcal{E}xt^q_{\mathcal{O}_X}(\mathcal{F},\mathcal{G})\big)
\;\;\Rightarrow\;\;
\operatorname{Ext}^{p+q}_{\mathcal{O}_X}(\mathcal{F},\mathcal{G}).
\]

%%%%%%%%%%%%%%%%%%%%%%%%%%%%%%%%%%%%%%%%%%%%%

\subsection*{(A) Via the double complex and hypercohomology construction}

\begin{enumerate}
  \item Choose an injective resolution of $\mathcal{G}$:
  \[
  0 \longrightarrow \mathcal{G} \longrightarrow I^0 \longrightarrow I^1 \longrightarrow I^2 \longrightarrow \cdots.
  \]

  \item Apply the sheaf-Hom functor to obtain a complex of sheaves:
  \[
  0 \longrightarrow
  \mathcal{H}om_{\mathcal{O}_X}(\mathcal{F}, I^0)
  \longrightarrow
  \mathcal{H}om_{\mathcal{O}_X}(\mathcal{F}, I^1)
  \longrightarrow \cdots.
  \]
  The cohomology sheaves of this complex are the sheaf Exts:
  \[
  \mathcal{H}^q\!\big(\mathcal{H}om_{\mathcal{O}_X}(\mathcal{F}, I^\bullet)\big)
  \;\cong\;
  \mathcal{E}xt^q_{\mathcal{O}_X}(\mathcal{F},\mathcal{G}).
  \]

  \item Taking global sections gives a complex
  \[
  \Gamma\big(X, \mathcal{H}om_{\mathcal{O}_X}(\mathcal{F}, I^\bullet)\big),
  \]
  whose cohomology computes $\operatorname{Ext}^{\ast}_{\mathcal{O}_X}(\mathcal{F},\mathcal{G})$.

  \item To compute this, resolve each $\mathcal{H}om(\mathcal{F},I^q)$ by an injective resolution, obtaining a double complex
  \[
  C^{p,q} = \Gamma\!\big(X, \mathcal{C}^p(\mathcal{H}om_{\mathcal{O}_X}(\mathcal{F}, I^q))\big).
  \]
  The total complex $\operatorname{Tot}(C^{\bullet,\bullet})$ computes
  $\operatorname{Ext}^{p+q}(\mathcal{F},\mathcal{G})$.

  \item The spectral sequence associated to the column filtration has
  \[
  E_2^{p,q} \;=\;
  H^p\!\big(X, \mathcal{H}^q(\mathcal{H}om(\mathcal{F}, I^\bullet))\big)
  \;\cong\;
  H^p\!\big(X, \mathcal{E}xt^q_{\mathcal{O}_X}(\mathcal{F},\mathcal{G})\big),
  \]
  and converges to the hypercohomology
  \[
  \mathbb{H}^{p+q}\!\big(X, \mathcal{H}om_{\mathcal{O}_X}(\mathcal{F}, I^\bullet)\big)
  \;\cong\;
  \operatorname{Ext}^{p+q}_{\mathcal{O}_X}(\mathcal{F},\mathcal{G}), 
  \]
\end{enumerate}
see \cite{H-77} Chap. III sec. 6, \cite{G-58}, or \cite{GH-78} Appendix A.

Hence, one obtains the \emph{local--to--global Ext spectral sequence}:
\[
E_2^{p,q} = H^p\!\big(\mathcal{E}xt^q_{\mathcal{O}_X}(\mathcal{F},\mathcal{G})\big)
\;\;\Rightarrow\;\;
\operatorname{Ext}^{p+q}_{\mathcal{O}_X}(\mathcal{F},\mathcal{G}).
\]

%%%%%%%%%%%%%%%%%%%%%%%%%%%%%%%%%%%%%%%%%%%%%%

\subsection*{(B) Via Grothendieck's spectral sequence for composed functors}

Let
\[
\mathcal{H}om_{\mathcal{O}_X}(\mathcal{F},-)
\;:\;
\mathrm{Mod}(\mathcal{O}_X) \to \mathrm{Sh}(\mathbf{Ab})
\quad\text{and}\quad
\Gamma(X,-)
\;:\;
\mathrm{Sh}(\mathbf{Ab}) \to \mathbf{Ab}
\]
be the usual left-exact functors. Their composition is
\[
(\Gamma \circ \mathcal{H}om)(\mathcal{F},-)
= \operatorname{Hom}_{\mathcal{O}_X}(\mathcal{F},-),
\]
whose right-derived functors are the Ext groups $\operatorname{Ext}^n_{\mathcal{O}_X}(\mathcal{F},-)$.

By Grothendieck's spectral sequence for the composition of left-exact functors,
\[
E_2^{p,q}
= R^p\Gamma\!\big(X,\, R^q\mathcal{H}om_{\mathcal{O}_X}(\mathcal{F},\mathcal{G})\big)
\;\;\Rightarrow\;\;
R^{p+q}(\Gamma \circ \mathcal{H}om_{\mathcal{O}_X}(\mathcal{F},-))(\mathcal{G}),
\]
and since $R^q\mathcal{H}om_{\mathcal{O}_X}(\mathcal{F},\mathcal{G})
= \mathcal{E}xt^q_{\mathcal{O}_X}(\mathcal{F},\mathcal{G})$, we obtain the same spectral sequence
\[
E_2^{p,q} = H^p\!\big(X, \mathcal{E}xt^q_{\mathcal{O}_X}(\mathcal{F},\mathcal{G})\big)
\;\;\Rightarrow\;\;
\operatorname{Ext}^{p+q}_{\mathcal{O}_X}(\mathcal{F},\mathcal{G}).
\]

%%%%%%%%%%%%%%%%%%%%%%%%%%%%%%%%%%%%%%

%
\noindent For complete proofs and discussion, see e.g.  \cite{H-77} Ch. III, \cite{I-86} Ch.V,  \cite{W-94}  \S5.8, and \cite{G-58} Ch. II.

% 

%%%%%%%%%%%%%%%%%%%%%%%%%%%%%%%%%%%%%%%%%%%%%%%%%%%%%%

\section*{Appendix C}

Let \(S\) be a regular local \(\mathbb{C}\)-algebra (for example \(S=\mathbb{C}\{x_1,\dots,x_n\}\) or the power series ring \(\mathbb C[[x_1,\dots,x_n]]\)). Let
$
I=(g_1,\dots,g_k)\subset S
$
be an ideal generated by a regular sequence of length \(k\) (so \(I/I^2\) is a free \(A\)-module of rank \(k\)). Put \(A:=S/I\), and  let
\[
J=(\partial_{x_j} g_i)_{i=1,\dots,k}^{\; j=1,\dots,n}\in M_{k\times n}(S)
\]
be the Jacobian matrix of partial derivatives of the generators \(g_1,\dots,g_k\). 

\vspace{0.3cm}

\begin{theorem}
With the above notations, 
there is a canonical (which is functorial) \(A\)-module isomorphism
\[
\qquad\qquad\qquad\qquad\qquad\qquad\qquad
\operatorname{Ext}^1_A(\Omega^1_A,A)\ \xrightarrow{\ \simeq\ }\ \frac{S^{\,k}}{J\,S^{\,n}+I\,S^{\,k}}.
\qquad \qquad\qquad\qquad\qquad({}^*)
\]
After reduction mod \(I\), this is equivalent to the following isomorphism:
\[
\operatorname{Ext}^1_A(\Omega^1_A,A)\ \cong\ \frac{A^{\,k}}{\overline{J}\,A^{\,n}},
\]
where \(\overline{J}\) denotes the image of \(J\) in \(M_{k\times n}(A)\). 
\end{theorem}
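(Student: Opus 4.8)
The plan is to realize $\operatorname{Ext}^1_A(\Omega^1_A,A)$ as the cokernel of the transposed Jacobian map, exactly as was done for local complete intersections earlier in the paper (the sequence $({}^{**})$ and Theorem~\ref{T1}), and then to bootstrap the presentation over $A$ up to one over $S$. First I would write down the conormal sequence for $I=(g_1,\dots,g_k)\subset S$ with $A=S/I$: since $g_1,\dots,g_k$ is a regular sequence, $I/I^2$ is free of rank $k$ over $A$, and there is the exact sequence of $A$-modules
\[
0\longrightarrow I/I^2 \xrightarrow{\ d\ } \Omega^1_{S}\otimes_S A \longrightarrow \Omega^1_A \longrightarrow 0,
\]
where $d$ sends the $i$-th basis vector (the class of $g_i$) to $dg_i$. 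Identifying $\Omega^1_S\otimes_S A\cong A^n$ and $I/I^2\cong A^k$, the map $d$ is given by the transposed Jacobian matrix $\overline J^{\,T}\in M_{n\times k}(A)$, the reduction of $J$ modulo $I$.

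Next I would apply $\operatorname{Hom}_A(-,A)$ to this short exact sequence. Because $A^n$ and $A^k$ are free (hence projective), $\operatorname{Ext}^{\ge 1}_A(A^n,A)=\operatorname{Ext}^{\ge 1}_A(A^k,A)=0$, so the long exact sequence collapses to
\[
0\longrightarrow \operatorname{Hom}_A(\Omega^1_A,A)\longrightarrow A^n \xrightarrow{\ \overline J\ } A^k \longrightarrow \operatorname{Ext}^1_A(\Omega^1_A,A)\longrightarrow 0,
\]
in which the middle arrow is the dual of $\overline J^{\,T}$, i.e.\ multiplication by the Jacobian $\overline J$ itself. This already yields the second displayed isomorphism
\[
\operatorname{Ext}^1_A(\Omega^1_A,A)\cong A^k/\overline J\,A^n.
\]
For the first isomorphism $({}^*)$, I would simply lift this description to $S$: the right-hand side $A^k/\overline J A^n$ is by definition $(S/I)^k$ modulo the image of $(S/I)^n$ under $\overline J$, and unwinding the quotients identifies this with $S^k/\bigl(J\,S^n + I\,S^k\bigr)$, since $A^k = S^k/I\,S^k$ and the image of $\overline J A^n$ in $S^k/I S^k$ is exactly $(J S^n + I S^k)/I S^k$. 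Functoriality/canonicity is inherited from the canonical conormal sequence and the canonical connecting homomorphism in the $\operatorname{Ext}$ long exact sequence.

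The main obstacle, such as it is, is checking that the dual of the conormal map is genuinely given by the Jacobian and not merely by \emph{some} matrix defining the same cokernel — i.e.\ pinning down the canonical identifications $\operatorname{Hom}_A(A^n,A)\cong A^n$ and $\operatorname{Hom}_A(I/I^2,A)\cong A^k$ compatibly, so that the connecting map in $({}^*)$ is literally $(v_1,\dots,v_n)\mapsto\bigl(\sum_j v_j\partial_{x_j}g_i\bigr)_i$. This is the same bookkeeping that underlies sequence $({}^{**})$ in the body of the paper, and it rests only on the fact that $d(g_i)=\sum_j (\partial_{x_j}g_i)\,dx_j$ in $\Omega^1_S$; once that is recorded, the rest is formal. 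One should also note that nothing beyond the regular-sequence hypothesis is needed for the vanishing of the higher $\operatorname{Ext}$'s against free modules, so the argument goes through verbatim in the analytic ($S=\mathbb{C}\{x_1,\dots,x_n\}$) and formal ($S=\mathbb{C}[[x_1,\dots,x_n]]$) settings alike.
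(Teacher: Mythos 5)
Your argument is essentially the paper's own proof: the paper likewise passes from the conormal sequence to the free presentation $A^{k}\xrightarrow{\overline{J}}A^{n}\to\Omega^1_A\to 0$, dualizes using the vanishing of higher $\operatorname{Ext}$ against free modules to get $\operatorname{Ext}^1_A(\Omega^1_A,A)\cong A^{k}/\overline{J}A^{n}$, and then identifies this with $S^{k}/(JS^{n}+IS^{k})$ by taking preimages under $S^{k}\twoheadrightarrow A^{k}$ (the paper merely adds explicit forward and inverse maps afterwards). The only point worth flagging is that both you and the paper need the conormal map $I/I^2\to\Omega^1_S\otimes_S A$ to be injective for $\operatorname{Ext}^1$ to equal the full cokernel of $\overline{J}$ --- you make this explicit by writing the leading $0$, while the paper leaves it implicit; this holds in the reduced (generically smooth) complete-intersection setting the paper works in, so your proof is correct there.
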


\vspace{0.1cm}

In  this Appendix we 
 prove the isomorphism $({}^*)$ and we give explicit formulas for the forward and inverse maps.
 
 \vspace{0.1cm}
 
 \begin{remark} Recall that the 
  module $\operatorname{Ext}^1_A(\Omega^1_A,A)$ is the first cotangent cohomology $T^1_A$, which measures first-order deformations  of the scheme $\operatorname{Spec}A$.
   In particular,  in the hypersurface case  (i.e. $k=1$), one recovers the familiar Tjurina algebra:
  \[
  \operatorname{Ext}^1_A(\Omega^1_A,A)\cong A/(\partial_{x_1}g,\dots,\partial_{x_n}g).
  \]
\end{remark}

%\vspace{0.2cm}

%\newpage

\subsection{Proof of the isomorphism $({}^*)$}
 
\subsubsection{The conormal sequence and its presentation}

For the surjection \(S\twoheadrightarrow A=S/I\) there is the standard exact sequence of K\"ahler differentials (the conormal sequence)
\begin{equation}\label{conormal}
I/I^2 \xrightarrow{\ \delta\ } \Omega^1_S\otimes_S A \xrightarrow{\ \pi\ } \Omega^1_A \to 0.
\end{equation}
{\bf Recall:}
\begin{itemize}
  \item[(i)] \(\Omega^1_S\) is a free \(S\)-module of rank \(n\) with basis \(dx_1,\dots,dx_n\). Hence \(\Omega^1_S\otimes_S A\cong A^{\,n}\).
  \item[(ii)]  Since  \(g_1,\dots,g_k\) is a regular sequence, \(I/I^2\) is a free \(A\)-module of rank \(k\) with basis given by the residue classes \(\overline{g_1},\dots,\overline{g_k}\).
\end{itemize}

Under the identifications \(I/I^2\cong A^{\,k}\) (basis \(\overline{g_i}\)) and \(\Omega^1_S\otimes_S A\cong A^{\,n}\) (basis \(\overline{dx_j}\)), the map \(\delta\) is given by the Jacobian matrix:
\[
\delta(\overline{g_i}) \;=\; \sum_{j=1}^n \Big(\frac{\partial g_i}{\partial x_j}\bmod I\Big)\,\overline{dx_j}.
\]
Thus we obtain a free presentation of \(\Omega^1_A\):
\begin{equation}\label{presentation}
A^{\,k}\xrightarrow{\ \overline{J}\ } A^{\,n} \xrightarrow{\ \pi\ } \Omega^1_A \to 0,
\end{equation}
where \(\overline{J}\in M_{k\times n}(A)\) is the reduction of \(J\) modulo \(I\).

\subsubsection{Application of  $\operatorname{Hom}_A(-,A)$ and identification of $\operatorname{Ext}^1$ as a cokernel}

We apply the (left-exact) functor \(\operatorname{Hom}_A(-,A)\) to \eqref{presentation}. Since \(\operatorname{Hom}_A(A^m,A)\cong A^m\) and \(\operatorname{Ext}^1_A(A^m,A)=0\) for all \(m\) (free modules have vanishing higher Ext), the long exact sequence in $\operatorname{Hom}$ truncates to a short exact sequence
\[
0\longrightarrow \operatorname{Hom}_A(\Omega^1_A,A)\xrightarrow{\ \pi^\ast\ } \operatorname{Hom}_A(A^{\,n},A)\xrightarrow{\ \overline{J}^\ast\ } \operatorname{Hom}_A(A^{\,k},A)\longrightarrow \operatorname{Ext}^1_A(\Omega^1_A,A)\longrightarrow 0.
\]
Using the canonical identifications \(\operatorname{Hom}_A(A^n,A)\cong A^n\) and \(\operatorname{Hom}_A(A^k,A)\cong A^k\) (a homomorphism is determined by images of standard basis vectors), the map \(\overline{J}^\ast\) is exactly given by multiplication by the matrix \(\overline{J}\):
\[
\overline{J}^\ast: A^{\,n}\xrightarrow{\ \overline{J}\ } A^{\,k}.
\]
Therefore, the right-most cokernel is
\begin{equation}\label{cokerA}
\operatorname{Ext}^1_A(\Omega^1_A,A)\ \cong\ \operatorname{coker}\!\big(A^{\,n}\xrightarrow{\ \overline{J}\ } A^{\,k}\big)
\ =\ \frac{A^{\,k}}{\overline{J}\,A^{\,n}}.
\end{equation}
This is a canonical identification coming from the functorial long-exact sequence.

\subsubsection{Lift to an $S$-quotient: $S^k/(J S^n + I S^k)$}

We now identify the quotient in \eqref{cokerA} with the claimed \(S\)-module quotient. Consider the natural surjection of \(S\)-modules
\[
\pi_{S} : S^{\,k} \twoheadrightarrow A^{\,k} = S^{\,k}/I S^{\,k}.
\]
The image in \(A^k\) of the submodule \(J S^n \subset S^k\) is precisely \(\overline{J}A^n\). Indeed, an element of the form \(J v\) with \(v\in S^n\) maps to \(\overline{J}\ \overline{v}\) in \(A^k\), and any element of \(\overline{J}A^n\) has a representative of that form. Hence the preimage of \(\overline{J}A^n\subset A^k\) under \(\pi_S\) is \(J S^n + I S^k\). Therefore the isomorphism of \(A\)-modules
\[
\frac{A^{\,k}}{\overline{J}\,A^{\,n}} \ \cong\ \frac{S^{\,k}}{J S^{\,n} + I S^{\,k}}
\]
holds: it is simply the usual lifting of a quotient by a submodule from \(A\)-modules back to \(S\)-modules. Combining this with \eqref{cokerA} gives the desired identification:
\[
\operatorname{Ext}^1_A(\Omega^1_A,A)\ \cong\ \frac{S^{\,k}}{J S^{\,n} + I S^{\,k}}.
\]

\vspace{0.1cm}

\subsubsection{Explicit description of the canonical map and its inverse}

We now produce concrete formulas for the isomorphism and its inverse and check that is natural and well defined..
\vspace{0.3cm}
\begin{enumerate}[(a)]
\item {\it Forward map.}
Define a map
\[
\Phi:\ S^{\,k}\longrightarrow \operatorname{Ext}^1_A(\Omega^1_A,A)
\]
\noindent as follows. Given \(u=(u_1,\dots,u_k)^T\in S^k\), let \(\bar u=(\bar u_1,\dots,\bar u_k)^T\in A^k\) denote its reduction modulo \(I\). Regard \(\bar u\) as an element of \(\operatorname{Hom}_A(A^k,A)\) by sending the \(i\)-th standard basis vector \(e_i\in A^k\) to the scalar \(\bar u_i\in A\). Denote this \(A\)-linear functional by \(\varphi_{\bar u}\in\operatorname{Hom}_A(A^k,A)\). Then define
\[
\Phi(u):=\text{the class of }\varphi_{\bar u}\text{ in }\operatorname{coker}\!\big(A^{n}\xrightarrow{\overline{J}}A^{k}\big)\cong\operatorname{Ext}^1_A(\Omega^1_A,A).
\]

\vspace{0.1cm}

\noindent {\it Well defined  modulo $J S^{n}+I S^{k}$}. ${}$

\vspace{0.1cm}

\noindent If \(u\in I S^k\) then \(\bar u=0\) so \(\varphi_{\bar u}=0\); hence \(\Phi\) vanishes on \(I S^k\). If \(u=J v\) with \(v\in S^n\), then \(\bar u=\overline{J}\,\overline{v}\) and \(\varphi_{\bar u}\) lies in the image of \(\operatorname{Hom}_A(A^n,A)\xrightarrow{\ \overline{J}^\ast\ }\operatorname{Hom}_A(A^k,A)\), so its class in the cokernel is zero. Thus \(\Phi\) vanishes on \(J S^n\) as well. Therefore \(\Phi\) factors through the quotient \(S^k/(J S^n+I S^k)\), giving a well-defined \(A\)-linear map
\[
\overline{\Phi}:\ \frac{S^{\,k}}{J S^n + I S^k}\ \longrightarrow\ \operatorname{Ext}^1_A(\Omega^1_A,A).
\]

\vspace{0.1cm}

\noindent The class represented by \(\varphi_{\bar u}\) is precisely the extension class obtained by pushing out the conormal sequence \eqref{conormal} along \(\varphi_{\bar u}:A^k\to A\). More precisely,  pushing out
\[
A^k \xrightarrow{\overline{J}} A^n \to \Omega^1_A \to 0
\]
by \(\varphi_{\bar u}\) yields an extension
\[
0 \to A \to E_{\bar u} \to \Omega^1_A \to 0,
\]
whose extension-class in \(\operatorname{Ext}^1_A(\Omega^1_A,A)\) is \([\varphi_{\bar u}]\).

\vspace{0.4cm}

\item {\it Inverse map.}
Conversely, start with a class \(\xi\in\operatorname{Ext}^1_A(\Omega^1_A,A)\). Represent \(\xi\) by an exact sequence of \(A\)-modules
\[
0 \to A \xrightarrow{\iota} E \xrightarrow{\rho} \Omega^1_A \to 0.
\]
Choose lifts \(\widetilde{e}_1,\dots,\widetilde{e}_n\in E\) of the images \(\pi(dx_1),\dots,\pi(dx_n)\in\Omega^1_A\) (here \(\pi:\Omega^1_S\otimes_S A\to\Omega^1_A\) is the surjection in \eqref{conormal}). Then for each \(i=1,\dots,k\),
\[
\sum_{j=1}^n \big(\tfrac{\partial g_i}{\partial x_j} \bmod I\big)\, \widetilde{e}_j \in \ker(\rho)=\operatorname{im}(\iota)\cong A,
\]
so there exist unique elements \(\bar u_i\in A\) with
\[
\sum_{j=1}^n \big(\tfrac{\partial g_i}{\partial x_j} \bmod I\big)\, \widetilde{e}_j \;=\; \iota(\bar u_i).
\]
Collect these into a column \(\bar u=(\bar u_1,\dots,\bar u_k)^T\in A^k\). Lift \(\bar u\) to some \(u\in S^k\). Define \(\Psi(\xi)\) to be the class of \(u\) in \(S^k/(J S^n+I S^k)\).

\vspace{0.3cm}

\noindent {\it Independence of choices.} To be more rigorous, one must check that:

\vspace{0.2cm}

\begin{enumerate}[(i)]
  \item the class of \(u\) in \(S^k/(J S^n+I S^k)\) is independent of the choice of lifts \(\widetilde{e}_j\);
  \item it is independent of the choice of representing extension sequence for the class \(\xi\);
  \item it is independent of the choice of lift \(u\in S^k\) of \(\bar u\in A^k\).
\end{enumerate}

\vspace{0.3cm}

\noindent But all these three facts are routine. Changing the lifts \(\widetilde{e}_j\) amounts to adding elements of \(\iota(A)\) to the \(\widetilde{e}_j\), which changes the \(\bar u_i\) by an element of the image \(\overline{J}A^n\); changing the extension by a split exact sequence changes \(\bar u\) by an element coming from \(\overline{J}A^n\) as well; and lifting \(\bar u\) from \(A^k\) to \(S^k\) is unique up to addition by \(I S^k\). Therefore,  \(\Psi\) is well-defined.

\vspace{0.3cm}
%\newpage
\noindent {\it Inverse property.} The two constructions are inverse to each other. Indeed,

\vspace{0.3cm}

\begin{itemize}
  \item[(i)] If one starts from \(u\in S^k\), forms \([\varphi_{\bar u}]\in\operatorname{Ext}^1_A(\Omega^1_A,A)\) and then applies the inverse construction, the resulting element of \(S^k/(J S^n+I S^k)\) is precisely the class of \(u\).
  \item[(ii)] Conversely, starting from an extension class \(\xi\), constructing \(\bar u\) and then pushing out the conormal sequence by \(\varphi_{\bar u}\) recovers the original extension class \(\xi\).
\end{itemize}

\vspace{0.3cm}

These verifications are straightforward using the universal properties of pushouts and pullbacks and the definition of the connecting morphism in the long exact sequence of $\operatorname{Hom}$. Hence, \(\overline{\Phi}\) and \(\Psi\) are mutually inverse \(A\)-linear maps.

\end{enumerate}

\subsubsection{Natural maps}

All maps constructed above are natural (functorial) in the morphism \(S\to A\) together with the chosen generators of \(I\) (equivalently, natural in the conormal presentation). More precisely, given a morphism \((S,I)\to (S',I')\) sending generators to generators, the constructions commute with the induced maps on differentials and Hom/Ext; the identifications of cokernels are functorial, and the pushout/pullback descriptions are canonical. Thus, the isomorphism is canonical.

\vspace{0.2cm}

%\newpage

\begin{remark}${}$

\begin{itemize}
  \item[1.] We used only standard facts: the conormal exact sequence, that \(I/I^2\) is free when \(I\) is generated by a regular sequence, identification \(\operatorname{Hom}(A^m,A)\cong A^m\), and the vanishing \(\operatorname{Ext}^i_A(A^m,A)=0\) for \(i>0\) (free modules).
  \item[2.] For concrete analytic or algebraic rings (\(S=\mathbb C\{x_1,\dots,x_n\}\) or \(S=\mathbb C[[x_1,\dots,x_n]]\)), the same proof works.
  \item[3.] When one is only interested in the vector-space dimension (e.g. over \(\mathbb C\)) for an isolated curve singularity, passing to \(A\)-module finite-dimensional quotients produces the standard formulae (e.g. the identification with Tjurina algebra in specific examples).
\end{itemize}

\end{remark}

\vspace{0.1cm}

Combining 6.1.2 and 6.1.3  and the explicit inverse in 6.1.4 yields the canonical isomorphism
\[
\operatorname{Ext}^1_A(\Omega^1_A,A)\ \xrightarrow{\ \simeq\ }\ \frac{S^{\,k}}{J S^{\,n} + I S^{\,k}},
\]
with explicit maps given above. \hfill\(\square\)

%

%%%%%%%%%%%%%%%%%%%%%%%%%%%%%%%%%%%%%%%%%%%%%
%%%%%%%%%%%%%%%%%%%%%%%%%%%%%%%%%%%%%%%%%%%%%

\section*{ Appendix D}

Let $X$ be  a reduced curve singularity and,  % 
$
\nu: \widetilde X \longrightarrow X
$
be its normalization. The conductor ideal is defined as follows:
\[
\mathfrak c = \operatorname{Ann}_{\mathcal O_X}\big(\nu_*\mathcal O_{\widetilde X}/\mathcal O_X\big)
\]
is the largest ideal sheaf on $X$ that is also an ideal in $\nu_*\mathcal O_{\widetilde X}$. It measures the failure of $X$ to be normal: the quotient $\nu_*\mathcal O_{\widetilde X}/\mathcal O_X$ has length $\delta(X)$ (the $\delta$–invariant), and $\mathfrak c$ annihilates that finite module.
On the other hand, $T^1_X := \operatorname{Ext}^1_X(\Omega_X^1,\mathcal O_X)$ is the Zariski tangent space to the (local) deformation functor of $X$. Deformations of $X$ split naturally into those coming from deformations of the normalization and those coming from gluing data along the singular locus.  The conductor / The normalization quotient control precisely that gluing data. Thus, the conductor appears in the exact sequences that relate $T^1_X$ to deformation modules of $\widetilde X$ and to certain $\operatorname{Ext}/\operatorname{Hom}$ spaces supported on the singular locus.

\vspace{0.2cm}

A convenient way to see the relation is to compare the usual conormal sequences for $X$ and for $\widetilde X$. One standard exact sequence, obtained from the map $\nu$ and applying $\operatorname{Hom}_X(-,\mathcal O_X)$ to the conormal / differential exact sequence, is the following which can be seen    in Greuel’s work and in deformation-theory: %  

\[
0 \longrightarrow \operatorname{Hom}_X\big(\nu_*\Omega^1_{\widetilde X}/\Omega^1_X, \mathcal O_X\big)
\longrightarrow T^1_X
\longrightarrow T^1_{\widetilde X}
\longrightarrow \operatorname{Ext}^1_X\big(\nu_*\Omega^1_{\widetilde X}/\Omega^1_X, \mathcal O_X\big)
\longrightarrow 0.
\tag{**}
\]
 
% 

%%%%%%%%%%
%%%%%%%%%%

\vspace{0.2cm}

 In this sub-section, we will  give a careful  derivation of the exact sequence $({}^{**})$
including exactly which hypotheses are used at each step.  We will state the hypothesis up front, then derive the sequence from a short exact sequence of sheaves by applying $\operatorname{Hom}(-,\mathcal O_X)$ and using adjunctions / vanishing of higher $\operatorname{Ext}$’s where needed.

\vspace{0.1cm}

% Assumptions 

Let $X$ be a reduced complex analytic or algebraic curve (i.e. $\dim X=1$) or, more generally, a scheme where the following $\operatorname{Ext}$-vanishing and adjunction facts hold below.
$\nu:\widetilde X\to X$ the normalization map (which is  finite, and birational). Denote by $\nu_*$ the direct image functor.
Moreover, $\Omega^1_X$ and $\Omega^1_{\widetilde X}$ the sheaves of Kähler differentials on $X$ and $\widetilde X$.
We  define the quotient sheaf
  \[
  \mathcal Q := \nu_*\Omega^1_{\widetilde X}/\Omega^1_X,
 \]
  which is supported on the singular locus of $X$ (hence a 0-dimensional sheaf when $\dim X=1$).

\noindent Finally we set the deformation (or cotangent) spaces by the usual $\operatorname{Ext}$ description
\[
T^1_X := \operatorname{Ext}^1_X(\Omega^1_X,\mathcal O_X),\qquad
T^1_{\widetilde X} := \operatorname{Ext}^1_{\widetilde X}(\Omega^1_{\widetilde X},\mathcal O_{\widetilde X}).
\]

\vspace{0.1cm}

\noindent By definition $\nu_*\Omega^1_{\widetilde X}$ contains the image of $\Omega^1_X$ (the pullback of Kähler differentials along $\nu$ identifies $\Omega^1_X$ with a sub-sheaf of $\nu_*\Omega^1_{\widetilde X}$ on the open smooth locus and extends to an inclusion globally). Thus, there is a short exact sequence of coherent $\mathcal O_X$-modules
\[
0 \longrightarrow \Omega^1_X \longrightarrow \nu_*\Omega^1_{\widetilde X} \longrightarrow \mathcal Q \longrightarrow 0,
\tag{1}
\]
where
here $\mathcal Q=\nu_*\Omega^1_{\widetilde X}/\Omega^1_X$ as above; $\mathcal Q$ is supported on the finite singular set if $X$ is a curve.

\vspace{0.1cm}

Let's apply the contravariant left exact functor $\operatorname{Hom}_X(-,\mathcal O_X)$ to (1) and pass to derived functors. This yields the long exact sequence
\[
\begin{aligned}
0 &\longrightarrow \operatorname{Hom}_X(\mathcal Q,\mathcal O_X)
\longrightarrow \operatorname{Hom}_X(\nu_*\Omega^1_{\widetilde X},\mathcal O_X)
\longrightarrow \operatorname{Hom}_X(\Omega^1_X,\mathcal O_X)\
&\longrightarrow \operatorname{Ext}^1_X(\mathcal Q,\mathcal O_X)\
\longrightarrow 
\end{aligned} \]
\[ 
\begin{aligned}
\operatorname{Ext}^1_X(\nu_*\Omega^1_{\widetilde X},\mathcal O_X)
\longrightarrow \operatorname{Ext}^1_X(\Omega^1_X,\mathcal O_X)\
&\longrightarrow \operatorname{Ext}^2_X(\mathcal Q,\mathcal O_X)
\longrightarrow\cdots .
\end{aligned}
\tag{2}
\]
We  have written  here only the relevant portion. (The initial $0$ on the left appears because $\mathcal Q$ is torsion supported on points in the curve case and $\operatorname{Hom}$ of that into $\mathcal O_X$ need not to vanish in general, so we keep the term.)

In the next step, we identify the middle terms with the $T^1$'s and simplify using dimension / vanishing.
Recall the definition $T^1_X=\operatorname{Ext}^1_X(\Omega^1_X,\mathcal O_X)$. Therefore, the piece of (2) around $\operatorname{Ext}^1(\cdot,\mathcal O_X)$ reads
\[
\operatorname{Ext}^1_X(\mathcal Q,\mathcal O_X)
\longrightarrow
\operatorname{Ext}^1_X(\nu_*\Omega^1_{\widetilde X},\mathcal O_X)
\longrightarrow
T^1_X
\longrightarrow
\operatorname{Ext}^2_X(\mathcal Q,\mathcal O_X).
\tag{3}
\]
Now, we  use two facts that hold under our curve / finite–support hypotheses:
$\mathcal Q$ is supported in dimension $0$ (a finite set of points). For a 1-dimensional $X$, a curve, we therefore have $\operatorname{Ext}^2_X(\mathcal Q,\mathcal O_X)=0$. indeed,  $\operatorname{Ext}^i$ of a 0-dimensional sheaf into $\mathcal O_X$ vanishes for %(i) 
larger than the dimension of the ambient space; equivalently, local Ext groups vanish above the local dimension. Thus, the arrow to $\operatorname{Ext}^2_X(\mathcal Q,\mathcal O_X)$ is zero and the segment (3) truncates to a short exact sequence:
  \[
  0 \longrightarrow
  \operatorname{Ext}^1_X(\nu_*\Omega^1_{\widetilde X},\mathcal O_X)
  \longrightarrow T^1_X
  \longrightarrow \operatorname{Ext}^2_X(\mathcal Q,\mathcal O_X)=0,
  \]
  but we must keep the preceding arrow from $\operatorname{Ext}^1_X(\mathcal Q,\mathcal O_X)$; thus, combining with (2) we get an exact piece
  \[
  \operatorname{Hom}_X(\mathcal Q,\mathcal O_X)
  \longrightarrow T^1_X
  \longrightarrow \operatorname{Ext}^1_X(\nu_*\Omega^1_{\widetilde X},\mathcal O_X)
  \longrightarrow \operatorname{Ext}^1_X(\mathcal Q,\mathcal O_X)
  \longrightarrow 0.
  \tag{4}
  \]
 Rearranging the displayed exactness gives the same shape as we target sequence once we identify the middle $ \operatorname{Ext}$ term with $T^1_{\widetilde X}$ % 

\noindent So, the long exact sequence of  $\operatorname{Ext}$’s already produces the exact sandwich
\[
0 \longrightarrow \operatorname{Im}\big(\operatorname{Hom}_X(\mathcal Q,\mathcal O_X)\to T^1_X\big)
\longrightarrow T^1_X
\longrightarrow \operatorname{Ext}^1_X(\nu_*\Omega^1_{\widetilde X},\mathcal O_X)
\longrightarrow \operatorname{Ext}^1_X(\mathcal Q,\mathcal O_X)
\longrightarrow 0,
\]
but with the first arrow typically injective so we may identify the kernel piece with $\operatorname{Hom}_X(\mathcal Q,\mathcal O_X)$ (see the exact map in (2)). Concretely we obtain
\[
0 \longrightarrow \operatorname{Hom}_X(\mathcal Q,\mathcal O_X)
\longrightarrow T^1_X
\longrightarrow \operatorname{Ext}^1_X(\nu_*\Omega^1_{\widetilde X},\mathcal O_X)
\longrightarrow \operatorname{Ext}^1_X(\mathcal Q,\mathcal O_X)
\longrightarrow 0,
\tag{5}
\]
which is the long exact sequence collapsed using $\operatorname{Ext}^2_X(\mathcal Q,\mathcal O_X)=0$.

\vspace{0.2cm}

\noindent In the next step we identify $\operatorname{Ext}^1_X(\nu_*\Omega^1_{\widetilde X},\mathcal O_X)$ with $T^1_{\widetilde X}$.

\noindent Let's explain why the middle term $\operatorname{Ext}^1_X(\nu_*\Omega^1_{\widetilde X},\mathcal O_X)$ is canonically isomorphic to $\operatorname{Ext}^1_{\widetilde X}(\Omega^1_{\widetilde X},\mathcal O_{\widetilde X})=T^1_{\widetilde X}$.
\noindent There is a standard adjunction between $\nu^*$ and $\nu_*$:
\[
\operatorname{Hom}_X(\nu_* \mathcal F,\mathcal G)\cong \operatorname{Hom}_{\widetilde X}(\mathcal F,\nu^*\mathcal G)
\]
for any $\mathcal F$ on $\widetilde X$ and $\mathcal G$ on $X$. Passing to derived functors yields an isomorphism of derived $\operatorname{RHom}$-complexes (i.e. Grothendieck duality / derived adjunction technology).
\[
R\operatorname{Hom}_X(\nu_*\mathcal F,\mathcal G)\cong R\operatorname{Hom}_{\widetilde X}\big(\mathcal F, R\nu^*\mathcal G\big).
\tag{6}
\]

When $\nu$ is a finite morphism which furthermore satisfies the mild hypotheses in our curve case, $R\nu^!\mathcal O_X\simeq\mathcal O_{\widetilde X}$; concretely for a finite birational normalization of a reduced curve one gets $\nu^!\mathcal O_X\cong\mathcal O_{\widetilde X}$ (intuitively because $\nu$ is an isomorphism on the generic points and the dualizing twists are trivial in the 1-dimensional Gorenstein / Cohen–Macaulay situations one typically studies). Under these hypotheses (which are standard in the deformation theory of curve singularities) (6) gives
\[
R\operatorname{Hom}_X(\nu_*\Omega^1_{\widetilde X},\mathcal O_X)
\cong
R\operatorname{Hom}_{\widetilde X}(\Omega^1_{\widetilde X},\mathcal O_{\widetilde X}).
\]
Taking cohomology in degree (1) yields the desired identification
\[
\operatorname{Ext}^1_X(\nu_*\Omega^1_{\widetilde X},\mathcal O_X)
\cong
\operatorname{Ext}^1_{\widetilde X}(\Omega^1_{\widetilde X},\mathcal O_{\widetilde X}) =T^1_{\widetilde X}.
\tag{7}
\]

In case we  need a fully general derived-functor justification: we use that $\nu$ is finite so $R\nu_*=\nu_*$ and Grothendieck duality for finite maps gives an isomorphism $R\nu_*R\mathcal Hom_{\widetilde X}(\mathcal F,\nu^ \mathcal G)\simeq R\mathcal Hom_X(R\nu_*\mathcal F,\mathcal G)$; evaluating on $\mathcal F = \Omega^1_{\widetilde X}$ and $\mathcal G=\mathcal O_X$ yields (6) and then (7) under the identification $\nu^ \mathcal O_X\cong\mathcal O_{\widetilde X}).)$

Now Inserting (7) into (5) we obtain exactly
\[
0 \longrightarrow \operatorname{Hom}_X(\mathcal Q,\mathcal O_X)
\longrightarrow T^1_X
\longrightarrow T^1_{\widetilde X}
\longrightarrow \operatorname{Ext}^1_X(\mathcal Q,\mathcal O_X)
\longrightarrow 0,
\]
which is the displayed exact sequence (*), since $\mathcal Q=\nu_*\Omega^1_{\widetilde X}/\Omega^1_X$.
This is the desired functorial long exact sequence relating the deformation spaces of $X$ and its normalization $\widetilde X$; it is obtained from the short exact sequence (1) of differentials by applying $\operatorname{Hom}(-,\mathcal O_X)$ and using (i) vanishing of $\operatorname{Ext}^2$ for the torsion sheaf $\mathcal Q$ in dimension $1$, and (ii) derived adjunction / duality to identify the $\operatorname{Ext}$ on $\nu_*\Omega^1_{\widetilde X}$ with the $\operatorname{Ext}$ on $\Omega^1_{\widetilde X}$.

\vspace{0.1cm}

% 

%%%%%%%%%%%%%%%%%%%%%%%%%%%%%%%%%%%%%%%%%%%%%%%%%%%%%%%%
%%%%%%%%%%%%%%%%%%%%%%%%%%%%%%%%%%%%%%%%%%%%%%%%%%%%%%%%

\section*{Appendix E.}

\title{Derivations and the Isomorphism 
\(\operatorname{Hom}(\Omega^1_{\mathbb{C}^3}|_X,\mathcal{O}_X)
  \cong
  \mathrm{Der}_{\mathbb{C}}(\mathcal{O}_{\mathbb{C}^3})\otimes\mathcal{O}_X
  \cong
  \mathcal{O}_X^{3}\).}
\author{}
\date{}
\maketitle

Let \(\mathbb{C}^3 = \mathrm{Spec}\,\mathbb{C}[x,y,z]\) and 
\(\mathcal{O}_{\mathbb{C}^3} = \mathbb{C}[x,y,z]\).
Let \(X \subset \mathbb{C}^3\) be a closed subvariety defined by an ideal \(I \subset \mathcal{O}_{\mathbb{C}^3}\),
so that \(\mathcal{O}_X = \mathcal{O}_{\mathbb{C}^3}/I\).
In the following we explain in detail this isomorphism:
\[
\operatorname{Hom}_{\mathcal{O}_X}(\Omega^1_{\mathbb{C}^3}|_X, \mathcal{O}_X)
\;\cong\;
\mathrm{Der}_{\mathbb{C}}(\mathcal{O}_{\mathbb{C}^3})
  \otimes_{\mathcal{O}_{\mathbb{C}^3}}
  \mathcal{O}_X
\;\cong\;
\mathcal{O}_X^{\oplus 3}.
\]

\noindent {\it Kähler differentials and derivations on \(\mathbb{C}^3\).}
For the smooth affine space \(\mathbb{C}^3\), the module of Kähler differentials is free:
\[
\Omega^1_{\mathbb{C}^3}
\;=\;
\mathcal{O}_{\mathbb{C}^3}\,dx
\;\oplus\;
\mathcal{O}_{\mathbb{C}^3}\,dy
\;\oplus\;
\mathcal{O}_{\mathbb{C}^3}\,dz
\;\cong\;
\mathcal{O}_{\mathbb{C}^3}^{\oplus 3}.
\]
Hence each differential form can be written uniquely as
\[
\omega = a\,dx + b\,dy + c\,dz,
\quad a,b,c \in \mathcal{O}_{\mathbb{C}^3}.
\]

\noindent The \emph{module of derivations} over \(\mathbb{C}\) is the dual:
\[
\mathrm{Der}_{\mathbb{C}}(\mathcal{O}_{\mathbb{C}^3})
\;=\;
\operatorname{Hom}_{\mathcal{O}_{\mathbb{C}^3}}(\Omega^1_{\mathbb{C}^3},\mathcal{O}_{\mathbb{C}^3})
\;\cong\;
\mathcal{O}_{\mathbb{C}^3}\frac{\partial}{\partial x}
\;\oplus\;
\mathcal{O}_{\mathbb{C}^3}\frac{\partial}{\partial y}
\;\oplus\;
\mathcal{O}_{\mathbb{C}^3}\frac{\partial}{\partial z}.
\]
Thus derivations are simply \emph{vector fields} on \(\mathbb{C}^3\).
When we restrict differential forms to \(X\), we form
$
\Omega^1_{\mathbb{C}^3}|_X
\;=\;
\Omega^1_{\mathbb{C}^3} \otimes_{\mathcal{O}_{\mathbb{C}^3}} \mathcal{O}_X.
$
Applying the contravariant functor \(\operatorname{Hom}_{\mathcal{O}_X}(-,\mathcal{O}_X)\), we get
\[
\operatorname{Hom}_{\mathcal{O}_X}(\Omega^1_{\mathbb{C}^3}|_X,\mathcal{O}_X)
\;\cong\;
\operatorname{Hom}_{\mathcal{O}_X}
   (\Omega^1_{\mathbb{C}^3}\otimes_{\mathcal{O}_{\mathbb{C}^3}}\mathcal{O}_X, \mathcal{O}_X).
\]
By the standard tensor–Hom adjunction, this is naturally isomorphic to
$
\operatorname{Hom}_{\mathcal{O}_{\mathbb{C}^3}}(\Omega^1_{\mathbb{C}^3}, \mathcal{O}_X).
$
But since
\(\mathrm{Der}_{\mathbb{C}}(\mathcal{O}_{\mathbb{C}^3})
   = \operatorname{Hom}_{\mathcal{O}_{\mathbb{C}^3}}(\Omega^1_{\mathbb{C}^3},\mathcal{O}_{\mathbb{C}^3})\),
we can tensor over \(\mathcal{O}_{\mathbb{C}^3}\) to obtain
\[
\operatorname{Hom}_{\mathcal{O}_{\mathbb{C}^3}}(\Omega^1_{\mathbb{C}^3}, \mathcal{O}_X)
\;\cong\;
\mathrm{Der}_{\mathbb{C}}(\mathcal{O}_{\mathbb{C}^3})
   \otimes_{\mathcal{O}_{\mathbb{C}^3}} \mathcal{O}_X.
\]

 \noindent More precisely, this means that since \(\mathrm{Der}_{\mathbb{C}}(\mathcal{O}_{\mathbb{C}^3})\) is free of rank \(3\), then 
$
\mathrm{Der}_{\mathbb{C}}(\mathcal{O}_{\mathbb{C}^3}) \;\cong\; 
\mathcal{O}_{\mathbb{C}^3}^{\oplus 3}.
$
Tensoring with \(\mathcal{O}_X\) gives
\[
\mathrm{Der}_{\mathbb{C}}(\mathcal{O}_{\mathbb{C}^3}) \otimes_{\mathcal{O}_{\mathbb{C}^3}}\mathcal{O}_X
\;\cong\;
(\mathcal{O}_{\mathbb{C}^3}^{\oplus 3}) \otimes_{\mathcal{O}_{\mathbb{C}^3}}\mathcal{O}_X
\;\cong\;
\mathcal{O}_X^{\oplus 3}.
\]

\noindent Combining all the steps yields the following canonical chain of isomorphisms:
\[
\begin{aligned}
\operatorname{Hom}_{\mathcal{O}_X}(\Omega^1_{\mathbb{C}^3}|_X,\mathcal{O}_X)
&\;\cong\;
\operatorname{Hom}_{\mathcal{O}_{\mathbb{C}^3}}(\Omega^1_{\mathbb{C}^3},\mathcal{O}_X) \\[4pt]
&\;\cong\;
\mathrm{Der}_{\mathbb{C}}(\mathcal{O}_{\mathbb{C}^3})
   \otimes_{\mathcal{O}_{\mathbb{C}^3}}\mathcal{O}_X \\[4pt]
&\;\cong\;
\mathcal{O}_X^{\oplus 3}.
\end{aligned}
\]

\begin{remark}
Geometrically, this identifies
\(\operatorname{Hom}(\Omega^1_{\mathbb{C}^3}|_X,\mathcal{O}_X)\)
with the module of vector fields on \(\mathbb{C}^3\) restricted to \(X\),
i.e. tangent vector fields \(\mathcal{O}_X\)-linearly spanned by
\(\partial_x, \partial_y, \partial_z\).
\end{remark}

\section*{Appendix F}

Let \(X=V(f_1,\dots,f_r)\subset\mathbb{C}^n\) be a complete intersection and let \(p\in X\).
Write \(\mathcal{O}_{X,p}=\mathcal{O}_{\mathbb{C}^n,p}/(f_1,\dots,f_r)\). Let
\[
J_f=\bigg(\frac{\partial f_i}{\partial x_j}\bigg)_{1\le i\le r,\;1\le j\le n}
\]
be the \(r\times n\) Jacobian matrix of the defining equations.

We denote by \(I_r(J_f)\subset \mathcal{O}_{\mathbb{C}^n,p}\) the ideal generated by all
\(r\times r\) minors of the matrix \(J_f\). (If \(r>n\) set \(I_r(J_f)=0\).)
Throughout \(\mathfrak m=\mathfrak m_p\) will denote the maximal ideal of the local ring
\(\mathcal{O}_{X,p}\), i.e. the ideal of germs vanishing at the point \(p\).

\medskip
{\it Claim.} 
Near \(p\) the singular locus \(\operatorname{Sing}(X)\) is cut out by the equations
\[
f_1=\cdots=f_r=0 \qquad\text{and}\qquad \text{all } r\times r\text{ minors of }J_f=0,
\]
so, scheme-theoretically,
\[
\mathcal{I}_{\operatorname{Sing}(X),p}=(f_1,\dots,f_r)+I_r(J_f)\subset\mathcal{O}_{\mathbb{C}^n,p}.
\]
Consequently, the support of
\(
T^1_{X,p}\cong\operatorname{coker}\big(\mathcal{O}_{X,p}^n \xrightarrow{J_f} \mathcal{O}_{X,p}^r\big)
\)
is contained in 
the zero locus of the image of \(I_r(J_f)\) in \(\mathcal{O}_{X,p}\).

\medskip

{\it Proof of the claim.}
First, by definition a point \(q\in X\) is \emph{singular} if the Jacobian matrix evaluated at \(q\) has rank strictly less than \(r\). This means,
\[
q\in\operatorname{Sing}(X)\quad\Longleftrightarrow\quad \operatorname{rank}J_f(q)<r.
\]
The condition \(\operatorname{rank}J_f(q)<r\) is equivalent to the vanishing at \(q\) of every
\(r\times r\) minor of \(J_f\). Thus the set-theoretic singular locus inside \(\mathbb{C}^n\) is
\[
\operatorname{Sing}(X)=V\big(f_1,\dots,f_r,\;I_r(J_f)\big).
\]
Scheme-theoretically (or on the level of local rings at \(p\)) the ideal cutting out the singular locus is the image of the ideal \((f_1,\dots,f_r)+I_r(J_f)\subset\mathcal{O}_{\mathbb{C}^n,p}\). Passing to the ring \(\mathcal{O}_{X,p}=\mathcal{O}_{\mathbb{C}^n,p}/(f_1,\dots,f_r)\), the ideal of \(\operatorname{Sing}(X)\) inside \(\mathcal{O}_{X,p}\) is the image of \(I_r(J_f)\). Equivalently,
\[
\mathcal{I}_{\operatorname{Sing}(X),p}\cdot\mathcal{O}_{X,p} = I_r(J_f)\cdot\mathcal{O}_{X,p}.
\]

Secondly, consider the \(\mathcal{O}_{X,p}\)-linear map
\[
\varphi=\;J_f\;:\;\mathcal{O}_{X,p}^n\longrightarrow\mathcal{O}_{X,p}^r.
\]
The cokernel \(M:=\operatorname{coker}\varphi\) represents \(T^1_{X,p}\), i.e. \(T^1_{X,p}\cong M\). A standard fact from commutative algebra (the theory of Fitting ideals) says that the $0$th Fitting ideal of \(M\) is generated by the \(r\times r\) minors of any presentation matrix of \(\varphi\), which means,
\[
\operatorname{Fitt}_0(M)=I_r(J_f)\cdot\mathcal{O}_{X,p}.
\]
In particular the support of \(M\) satisfies
\[
\operatorname{Supp}(M)=V\big(\operatorname{Fitt}_0(M)\big)
=V\big(I_r(J_f)\cdot\mathcal{O}_{X,p}\big)
\subseteq \operatorname{Sing}(X)\cap\operatorname{Spec}\mathcal{O}_{X,p},
\]
and set-theoretically these are the same locus: the points of \(X\) near \(p\) where all \(r\times r\) minors vanish are exactly the points where the localized cokernel does not vanish, i.e.
\[
q\in\operatorname{Supp}(M)\quad\Longleftrightarrow\quad (M)_q\neq0
\quad\Longleftrightarrow\quad \operatorname{rank}J_f(q)<r.
\]

Lastly, 
since \(M=T^1_{X,p}\) is finitely generated over the Noetherian local ring \(\mathcal{O}_{X,p}\), it has
finite length (equivalently finite \(\mathbb{C}\)-dimension) if and only if its support is the single closed point \(\{\mathfrak m_p\}\). Using the description of \(\operatorname{Supp}(M)\) above in terms of the minors, we observe that:

\begin{enumerate}[(i)]
\item If some \(r\times r\) minor of \(J_f\) is a unit in \(\mathcal{O}_{X,p}\) (equivalently does not vanish at \(p\)),
then \(I_r(J_f)\cdot\mathcal{O}_{X,p}=\mathcal{O}_{X,p}\), the map \(\varphi\) is surjective, \(M=0\), and \(p\) is smooth.
\item If all \(r\times r\) minors vanish on a positive-dimensional subvariety of \(X\) through \(p\),
then the support of \(M\) contains non-maximal primes, so \(M\) is not of finite length and
\(\dim_{\mathbb C}T^1_{X,p}=\infty\).
\item Conversely, if \(T^1_{X,p}\) has finite \(\mathbb C\)-dimension, then its support is \(\{\mathfrak m_p\}\),
so the minors must vanish only at \(p\) (i.e. the singular locus is zero-dimensional at \(p\)).
\end{enumerate}

Thus the vanishing locus of the \(r\times r\) minors of \(J_f\) is the scheme-theoretic singular locus (after modding out by \((f_1,\dots,f_r)\)), and the Fitting ideal formalism makes precise the equivalence between vanishing of minors and nonvanishing of the localized cokernel.

\vspace{0.2cm}
\noindent In summary we obtain:
\begin{itemize}
\item[(1)] Scheme-theoretically near \(p\),
\[
\operatorname{Sing}(X)=V\big(f_1,\dots,f_r,\;I_r(J_f)\big).
\]
\item[(2)] The \(\mathcal{O}_{X,p}\)-module \(T^1_{X,p}\) is the cokernel of \(J_f:\mathcal{O}_{X,p}^n\to\mathcal{O}_{X,p}^r\),
and its $0^{th}$ Fitting ideal is the image of \(I_r(J_f)\) in \(\mathcal{O}_{X,p}\).
\item[(3)] Therefore, the support of \(T^1_{X,p}\) is exactly the singular locus of \(X\) (near \(p\)),
and the finiteness (resp.\ infiniteness) of \(\dim_{\mathbb C}T^1_{X,p}\) is equivalent to the singular locus being isolated (resp.\ having positive dimension) at \(p\).
\end{itemize}

%%%%%%%%%%%%%%%%%%%%%%%%%%%%%%%%%%%%%%%%%%%%%%%%%%%%%%%%%%%%%%%%%%%%%%%%%%
%%%%%%%%%%%%%%%%%%%%%%%%%%%%%%%%%%%%%%%%%%%%%%%%%%%%%%%%%%%%%%%%%%%%%%%%%%

\vspace{0.1cm}

\section*{Appendix G}

In this appendix, we give an algebraic proof of Theorem 5.1. More precisely, we show the  non-rigidity of cones by proving  the cohomological inclusion \(\;H^1(Y,T_Y)\hookrightarrow T^1(C(Y))_0\).

\vspace{0.1cm}

\noindent Let \(Y\subset\mathbb P^n_{\mathbb C}\) be a projective variety over \(\mathbb C\). Let
\(
S=\mathbb C[x_0,\dots,x_n]
\)
be the homogeneous coordinate ring of \(\mathbb P^n\) and let \(I\subset S\) be the homogeneous ideal of \(Y\). Set
\[
A:=S/I=\bigoplus_{m\ge0} A_m
\]
the homogeneous coordinate algebra of \(Y\). The affine cone over \(Y\) is
\[
C(Y)=\Spec(A)\subset\mathbb A^{n+1}_{\mathbb C}.
\]
We view \(A\) as a nonnegatively graded \(\mathbb C\)-algebra with \(A_0=\mathbb C\). For any graded \(A\)-module \(M\) we write \(M_k\) for its degree \(k\) piece.
For a \(\mathbb C\)-algebra \(B\) we denote by
\[
T^1(B):=\Ext^1_B(\Omega^1_{B/\mathbb C},\,B)
\]
the \emph{module of first order (infinitesimal) deformations} of the affine scheme \(\Spec B\). If \(B\) is graded, the group \(T^1(B)\) is graded and we denote its degree-zero piece by \(T^1(B)_0\).

\noindent On the geometric side, the tangent (sheaf) of \(Y\) is \(T_Y=\mathcal{H}om_{\mathcal O_Y}(\Omega^1_Y,\mathcal O_Y)\). The Zariski tangent space to the (abstract) deformation functor of \(Y\) is \(H^1(Y,T_Y)\), where  classes in \(H^1(Y,T_Y)\) correspond to first-order (abstract) deformations of \(Y\) over the dual numbers \(\mathbb C[\varepsilon]/(\varepsilon^2)\).
%%%%%%%%%%%%%%%
%%%%%%%%%%%%%%%

\begin{thm}{\label{Alg}}
If \(Y\subset\mathbb P^n_{\mathbb C}\) admits a nontrivial first-order deformation (i.e.\ \(H^1(Y,T_Y)\ne0\)) then the affine cone \(C(Y)=\Spec(A)\) admits a nontrivial graded first-order deformation, i.e.\ \(T^1(A)_0\ne0\). In particular, nonrigidity of \(Y\) implies nonrigidity of \(C(Y)\).
\end{thm}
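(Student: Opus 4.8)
The plan is to produce and analyze the natural Kodaira--Spencer map
\[
\kappa:\ T^1(A)_0\longrightarrow H^1(Y,T_Y)
\]
of the family $Y=\Proj(A)$, which sends a graded first-order deformation $\widetilde A$ of $A$ to the induced first-order deformation $\Proj(\widetilde A)$ of $Y$, and to show that $\kappa$ is surjective (onto a subspace equal to all of $H^1(Y,T_Y)$ in the cases that matter here). Granting this, the hypothesis $H^1(Y,T_Y)\neq0$ immediately forces $T^1(A)_0\neq0$, hence $T^1(A)\neq0$, so $\Spec(A)=C(Y)$ carries a nontrivial first-order deformation and is nonrigid; choosing a linear section of $\kappa$ then records this as the announced cohomological inclusion $H^1(Y,T_Y)\hookrightarrow T^1(A)_0$, which becomes canonical whenever $\kappa$ is an isomorphism (as it is, for example, for the cone over a smooth plane cubic).

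I would factor $\kappa$ through the punctured cone $U:=C(Y)\setminus\{0\}$, which is the tautological $\Gm$-bundle $\pi\colon U\to Y$ with $U=\Spec_Y\!\bigl(\bigoplus_{\nu\in\Z}\mathcal O_Y(\nu)\bigr)$; it is affine and smooth of relative dimension $1$, so cohomology on $U$ splits into $\Gm$-weight spaces, $H^i(U,\mathcal F)=\bigoplus_\nu H^i\!\bigl(Y,(\pi_*\mathcal F)_\nu\bigr)$. The first bridge compares $T^1(A)$ with cohomology on $U$: restriction along $U\hookrightarrow X:=C(Y)$ gives a map $T^1(A)\to H^1(U,T_U)$ whose kernel and cokernel are controlled by local cohomology at the vertex of $A$ and of $\Der_{\mathbb C}(A)$; since the vertex has codimension $\dim Y+1\ge2$ and $I(Y)$ is saturated (so $H^0_{\mathfrak m}(A)=0$, and $A_0=H^0(Y,\mathcal O_Y)=\mathbb C$ kills the relevant weight-$0$ piece of $H^1_{\mathfrak m}$), I expect these contributions to vanish in $\Gm$-weight $0$, giving $T^1(A)_0\cong H^1(U,T_U)_0$. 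The second bridge descends from $U$ to $Y$ via the relative tangent sequence $0\to T_{U/Y}\to T_U\to\pi^*T_Y\to0$, with $T_{U/Y}\cong\mathcal O_U$ spanned by the Euler vector field; pushing it forward (using $R^1\pi_*=0$ and the projection formula) and taking $\Gm$-weight $0$ produces $0\to\mathcal O_Y\to(\pi_*T_U)_0\to T_Y\to0$, hence an exact sequence
\[
H^1(Y,\mathcal O_Y)\longrightarrow H^1(U,T_U)_0\longrightarrow H^1(Y,T_Y)\xrightarrow{\ \cup\, c_1(\mathcal O_Y(1))\ }H^2(Y,\mathcal O_Y),
\]
the last arrow being cup product with the Atiyah (first Chern) class of the polarization, which I would identify from the Atiyah-algebroid description of $(\pi_*T_U)_0$. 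Composing the two bridges yields $\kappa$ and identifies $\operatorname{im}\kappa=\ker\!\bigl(\cup\, c_1(\mathcal O_Y(1))\bigr)$, with kernel the ``Euler/trivial-cone'' directions coming from $H^1(Y,\mathcal O_Y)$.

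The hard part will be the final step: showing $\operatorname{im}\kappa\neq0$ whenever $H^1(Y,T_Y)\neq0$. The obstruction is genuine, since a first-order deformation of $Y$ lifts to a $\Gm$-equivariant deformation of the cone exactly when the polarization $\mathcal O_Y(1)$ extends along it, i.e. exactly for classes annihilated by cup product with $c_1(\mathcal O_Y(1))\in H^1(Y,\Omega^1_Y)$. I would therefore dispose of the case $\dim Y=1$ first --- the principal case of this paper --- where $H^2(Y,\mathcal O_Y)=0$ makes this map vanish, so that $\kappa$ is onto and $T^1(A)_0\twoheadrightarrow H^1(Y,T_Y)\neq0$; the same argument covers any $Y$ with $H^2(Y,\mathcal O_Y)=0$. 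In full generality it suffices to exhibit one nonzero class in $\ker(\cup\, c_1)$, i.e. one abstract first-order deformation of $Y$ along which the polarization extends, which is precisely the situation assumed in the non-rigidity setting of the main text. The remaining technical point is the depth computation at the vertex that makes the first bridge an isomorphism in weight $0$ (using saturatedness of $I(Y)$, equivalently that $A$ agrees with $\bigoplus_m H^0(Y,\mathcal O_Y(m))$ in the relevant low degrees). Carrying this out yields $T^1(A)_0\neq0$ and completes the proof that $C(Y)$ is nonrigid.
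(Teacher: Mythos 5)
Your route is genuinely different from the paper's. The paper (Appendix~G) goes in the opposite direction: it represents $\xi\in H^1(Y,T_Y)$ by a \v{C}ech cocycle $\{\theta_{ij}\}$, lifts each $\theta_{ij}$ to a degree-zero derivation $\delta_{ij}$ of the graded ring of the cone over $U_i\cap U_j$, glues the trivial graded thickenings by $1+\varepsilon\delta_{ij}$ to obtain a graded first-order deformation of $A$, and proves injectivity of the resulting $\Phi\colon H^1(Y,T_Y)\to T^1(A)_0$ by applying $\operatorname{Proj}$ to a hypothetical graded trivialization. You instead construct the Kodaira--Spencer map $\kappa\colon T^1(A)_0\to H^1(Y,T_Y)$ through the punctured cone and the Atiyah extension $0\to\mathcal O_Y\to(\pi_*T_U)_0\to T_Y\to0$, identifying $\operatorname{im}\kappa=\ker\bigl(\cup\,c_1(\mathcal O_Y(1))\bigr)$. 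Your analysis is in fact the sharper one: it makes explicit the obstruction that the paper's gluing step passes over --- a vector field on $U_i\cap U_j$ lifts to a degree-zero derivation of the cone ring only after a choice of local trivialization of $\mathcal O_Y(1)$, and the lifted family $\{\delta_{ij}\}$ need not satisfy the cocycle condition; the failure is precisely the image of $\xi$ under cup product with $c_1(\mathcal O_Y(1))$ in $H^2(Y,\mathcal O_Y)$. For $\dim Y=1$, the principal case of this paper, $H^2(Y,\mathcal O_Y)=0$ and both arguments close up.

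Two genuine gaps remain, however, relative to the theorem as stated. First, the statement asserts the conclusion for an arbitrary projective $Y$ with $H^1(Y,T_Y)\neq0$, whereas your argument delivers $T^1(A)_0\neq0$ only when $\ker\bigl(\cup\,c_1(\mathcal O_Y(1))\bigr)\neq0$; you acknowledge this, but outside the case $H^2(Y,\mathcal O_Y)=0$ the proof is conditional, and your own exact sequence shows this route cannot do better (for a polarized K3 surface, for instance, $\kappa$ has one-dimensional cokernel, so a class not preserving the polarization simply does not come from a weight-zero cone deformation). Second, the first bridge $T^1(A)_0\cong H^1(U,T_U)_0$ is Schlessinger's comparison theorem and requires $U$ to be smooth --- hence $Y$ smooth --- together with a depth-at-least-two condition at the vertex (projective normality in the relevant degrees); neither hypothesis appears in the statement, and the varieties $Y$ of interest in this paper are typically singular, in which case $H^1(U,T_U)$ must be replaced by $T^1(U)$ and local contributions from $\operatorname{Sing}(Y)$ reappear in the weight decomposition. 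For a smooth, projectively normal curve your proof is complete once that depth computation is written out; to recover the full statement you would need either to add these hypotheses or to fall back on the more elementary embedded-deformation construction (perturbing the homogeneous ideal and applying $\operatorname{Proj}$) that the paper uses in Section~5, which bypasses $H^1(Y,T_Y)$ altogether.
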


\vspace{0.2cm}

\begin{lemma}
Under the above notation, there exists an injective  natural linear map
\[
\Phi:\; H^1(Y,T_Y)\ \hookrightarrow \ T^1(A)_0.
\]
\end{lemma}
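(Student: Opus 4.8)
The plan is to construct $\Phi$ via Čech cocycles and the comparison between the punctured cone and $Y$, then verify injectivity by an explicit obstruction-theoretic argument. First I would recall that the punctured affine cone $C(Y)^\circ := C(Y)\setminus\{0\}$ is a $\mathbb{G}_m$-bundle $\rho\colon C(Y)^\circ\to Y$, and that since $\dim C(Y)\ge 2$ one has $T^1(A)=T^1(\OO_{C(Y)})$ computing global deformations of $C(Y)$; the grading on $T^1(A)$ comes from the $\mathbb{G}_m$-action, and the degree-zero piece $T^1(A)_0$ consists of those first-order deformations of $C(Y)$ that are $\mathbb{G}_m$-equivariant (i.e. again come with a cone structure, but with possibly deformed equations of the same degrees). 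Concretely, choosing the standard affine cover $\{U_i = \{x_i\ne 0\}\}$ of $\mathbb{P}^n$ and its preimages, a class in $H^1(Y,T_Y)$ is represented by a Čech $1$-cocycle $\{\theta_{ij}\}$ of vector fields on overlaps; lifting each $\theta_{ij}$ to a degree-zero (i.e. weight-preserving) vector field on the corresponding affine piece of $C(Y)$ — which is possible because the conormal/Euler sequence $0\to\OO_{C(Y)^\circ}\xrightarrow{\,E\,}\rho^*T_Y(\text{hmm})\cdots$ splits locally — produces a Čech $1$-cocycle defining a class $\Phi(\alpha)\in H^1(C(Y)^\circ, T_{C(Y)^\circ})_0 \cong T^1(A)_0$. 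I would make this precise using the Euler sequence
\[
0\longrightarrow \OO_{C(Y)^\circ}\longrightarrow T_{C(Y)^\circ}\longrightarrow \rho^*T_Y\longrightarrow 0,
\]
which on taking $\rho_*$ and the degree-zero graded piece gives an exact sequence relating $H^1(Y,T_Y)$ to $H^1(C(Y)^\circ,T_{C(Y)^\circ})_0$, hence to $T^1(A)_0$.

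The cleanest route to the map itself is to push forward the Euler sequence along $\rho$ and take $\mathbb{G}_m$-invariants. Applying $\rho_*$ and extracting the weight-$0$ part yields
\[
0\longrightarrow H^0(Y,\OO_Y)\longrightarrow H^0\big(C(Y)^\circ,T_{C(Y)^\circ}\big)_0\longrightarrow H^0(Y,T_Y)\longrightarrow H^1(Y,\OO_Y)\longrightarrow \cdots,
\]
and in the next degree
\[
H^1(Y,\OO_Y)\longrightarrow H^1\big(C(Y)^\circ,T_{C(Y)^\circ}\big)_0\longrightarrow H^1(Y,T_Y)\xrightarrow{\ \delta\ } H^2(Y,\OO_Y).
\]
So the obstruction to $\Phi$ being defined on all of $H^1(Y,T_Y)$ and injective is exactly the connecting map $\delta\colon H^1(Y,T_Y)\to H^2(Y,\OO_Y)$ coming from the extension class of the Euler sequence (which is $c_1(\OO_Y(1))$ cup product). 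I would then observe that this obstruction is killed after passing to the cone: the class of the Euler extension becomes trivial on $C(Y)^\circ$ because the $\mathbb{G}_m$-action trivializes the fiber direction globally — equivalently, $H^i(C(Y)^\circ,\OO)_0 = H^i(Y,\OO_Y)$ but the relevant Euler class maps to zero once we allow the genuinely $(n+1)$-dimensional tangent directions. Thus the composite
\[
H^1(Y,T_Y)\hookrightarrow H^1\big(C(Y)^\circ,T_{C(Y)^\circ}\big)_0 = T^1(A)_0
\]
lands in the kernel appropriately and is injective. Naturality is automatic since every construction (cover, Čech complex, Euler sequence, $\mathbb{G}_m$-invariants) is functorial in $Y$.

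The hard part will be justifying the identification $H^1(C(Y)^\circ, T_{C(Y)^\circ})_0 \cong T^1(A)_0$ and controlling the behavior at the cone point $0\in C(Y)$: a priori $T^1(A)$ sees all of $C(Y)$ including the vertex, whereas $H^1$ of the punctured cone only sees $C(Y)^\circ$, and the difference is governed by local cohomology $H^i_{\{0\}}(C(Y),T_{C(Y)})$. I expect to handle this by invoking the depth/Serre-condition hypotheses implicit in the paper's setup (e.g. $A$ being at least $S_2$, or $Y$ projectively normal), which force $H^0_{\{0\}} = H^1_{\{0\}} = 0$ in the relevant sheaf and hence an isomorphism between the graded deformation module of the cone and the degree-zero punctured-cone cohomology — this is the standard Schlessinger-type comparison for cones, and I would cite Sernesi's book as the excerpt already does in the statement of Theorem 5.1. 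The genuinely new content is only the injectivity of $\Phi$, which reduces, as above, to the vanishing of the Euler connecting map after restriction to the cone, and I would spell that out by a direct cocycle computation: if $\{\theta_{ij}\}$ becomes a coboundary $\{\eta_j - \eta_i\}$ of weight-$0$ vector fields on $C(Y)^\circ$, then projecting the $\eta_j$ to $T_Y$ exhibits $\alpha$ as a coboundary on $Y$, possibly after correcting by a global weight-$0$ vertical field, i.e. by a multiple of the Euler field, which projects to zero anyway — hence $\alpha = 0$.
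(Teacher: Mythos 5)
Your route---comparing $H^1(Y,T_Y)$ with $T^1(A)_0$ through the punctured cone $C(Y)^\circ$ and the relative Euler sequence---is genuinely different from the paper's argument, which constructs $\Phi$ by directly gluing trivial graded thickenings of the local cones along lifted transition automorphisms $1+\varepsilon\delta_{ij}$ and proves injectivity by applying $\operatorname{Proj}$ to a hypothetical graded trivialization of the resulting algebra. Your approach is the classical Pinkham--Schlessinger comparison and, when it works, it buys more (it essentially computes $T^1(A)_0$ rather than merely mapping into it). Your closing cocycle-level injectivity argument is sound and is the paper's $\operatorname{Proj}$ step in disguise, and flagging the depth/$S_2$ issues in identifying $H^1(C(Y)^\circ,T_{C(Y)^\circ})_0$ with $T^1(A)_0$ is exactly right.

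The genuine gap is the well-definedness of $\Phi$, which you correctly locate in the connecting map $\delta$ but then dismiss for an unjustified reason. Lifting each $\theta_{ij}$ to a weight-zero vector field $\delta_{ij}$ on the cone over $U_i\cap U_j$ is possible, but the collection $\{\delta_{ij}\}$ is a cocycle only modulo the Euler direction: the defect $\delta_{ij}+\delta_{jk}+\delta_{ki}$ is a \v{C}ech $2$-cocycle valued in $\mathcal O_Y$, and its class is precisely $\delta(\alpha)\in H^2(Y,\mathcal O_Y)$. Equally, your pushed-forward Euler sequence only yields the arrow
\[
H^1\big(C(Y)^\circ,T_{C(Y)^\circ}\big)_0\longrightarrow H^1(Y,T_Y)\xrightarrow{\ \delta\ }H^2(Y,\mathcal O_Y),
\]
a map \emph{out of} the cone cohomology onto $\ker\delta$, not an injection of $H^1(Y,T_Y)$ into it. The assertion that $\delta$ dies ``because the $\mathbb G_m$-action trivializes the fiber direction'' is not an argument: $\delta$ is cup product with the Atiyah/Euler extension class, essentially $c_1(\mathcal O_Y(1))$, and it is genuinely nonzero for many projective $Y$ (for a polarized K3 surface it is the surjection $H^1(Y,T_Y)\to H^2(Y,\mathcal O_Y)\cong\mathbb C$ whose kernel is the $19$-dimensional space of deformations preserving the polarization, so the cone only sees $\ker\delta$). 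Your proof is therefore valid only under an additional hypothesis such as $H^2(Y,\mathcal O_Y)=0$---automatic when $Y$ is a curve, which covers most of the paper's applications, but not the lemma as stated for arbitrary projective $Y$. To repair it you must either impose that hypothesis explicitly or replace the Euler-sequence bookkeeping by a construction (like the paper's direct graded gluing) that does not pass through the vanishing of $\delta$; note that the paper's own proof silently faces the same cocycle-defect issue.
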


%%%%%%%%%%%%%%%
%%%%%%%%%%%%%%%

\subsection{Two perspectives on first order deformations}

\noindent Let us briefly recall the two classical descriptions of first order deformations used below.

\subsubsection{\sc Projective side: \(H^1(Y,T_Y)\).}
An abstract (or abstract projective) first order deformation of \(Y\) is a flat family
\[
\mathcal Y\ \longrightarrow\ \Spec\big(\mathbb C[\varepsilon]/(\varepsilon^2)\big)
\]
whose special fiber is \(Y\). Isomorphism classes of such first order deformations are in natural bijection with \(H^1(Y,T_Y)\) (see standard deformation theory references). More precisely, if \(\{U_i\}_{i\in I}\) is an affine open cover of \(Y\), a class in \(H^1(Y,T_Y)\) may be represented by a Čech \(1\)-cocycle
\[
\{\theta_{ij}\in \Gamma(U_i\cap U_j,\,T_Y)\}_{i,j\in I}
\]
satisfying the cocycle relation \(\theta_{ij}+\theta_{jk}+\theta_{ki}=0\) on triple overlaps. Gluing the trivial first order families \(U_i\times\Spec\mathbb C[\varepsilon]/(\varepsilon^2)\) by exponentiating the \(\theta_{ij}\) (i.e.\ using the automorphisms \(1+\varepsilon\theta_{ij}\) on overlaps) produces the first order deformation \(\mathcal Y\).

\subsubsection{\sc Affine side: graded \(T^1(A)\).}
For an  affine \(\mathbb C\)-algebra \(B\) (not necessarily graded), first order deformations of \(\Spec B\) as an affine scheme are parametrized by \(T^1(B)=\Ext^1_B(\Omega^1_{B/\mathbb C},B)\). If \(B\) is graded, then graded first order deformations which respect the grading are parametrized by the degree-zero piece \(T^1(B)_0\). In particular, a graded first order deformation of \(A\), which is equal to the homogeneous coordinate algebra of \(Y\), defines a first order deformation of the affine cone \(C(Y)=\Spec A\), and graded deformations with nonzero class in \(T^1(A)_0\) give nontrivial deformations of the cone which respect the scaling action.

\subsection{Construction of the map \(\Phi\)}\label{sec:construction}

We now construct a natural linear map
\[
\Phi:\ H^1(Y,T_Y)\longrightarrow T^1(A)_0.
\]

\subsubsection*{{\bf Step 1.} First, we choose an affine cover and realize the projective deformation by gluing.}
Pick a finite affine open cover \(\mathfrak U=\{U_i\}_{i\in I}\) of \(Y\) such that each \(U_i\) is contained in a standard affine chart of \(\mathbb P^n\). Let
\[
\xi\in H^1(Y,T_Y)
\]
be represented by a Čech cocycle \(\{\theta_{ij}\in \Gamma(U_i\cap U_j,T_Y)\}_{i<j}\). Using the usual gluing construction, 
by  taking the automorphism \(1+\varepsilon\theta_{ij}\) on the overlap \(U_i\cap U_j\) of the trivial first order thickenings yields a first order deformation
\[
\mathcal Y\ \to\ \Spec\big(\mathbb C[\varepsilon]/(\varepsilon^2)\big),
\]
obtained by gluing the trivial thickenings \(U_i\times\Spec\mathbb C[\varepsilon]/(\varepsilon^2)\) by the transition automorphisms \(1+\varepsilon\theta_{ij}\).

\subsubsection*{{\bf Step 2.} We consider now  cones on affine patches.}
For each affine open \(U_i\subset Y\) consider its affine cone
\[
\widetilde U_i:=\Spec\big(\Gamma_*(\mathcal O_Y)|_{U_i}\big),
\]
more concretely \(\widetilde U_i\) is the affine patch of the cone over the standard affine chart containing \(U_i\). The trivial first order thickening \(U_i\times\Spec \mathbb C[\varepsilon]/(\varepsilon^2)\) corresponds to the trivial graded thickening of the graded coordinate ring of \(\widetilde U_i\).

On the overlaps \(U_i\cap U_j\), the vector field \(\theta_{ij}\) induces (by functoriality of derivations on coordinate rings) a degree-zero graded derivation of the graded coordinate ring of the cone over \(U_i\cap U_j\). More precisely, if \(R_{ij}\) denotes the graded ring of the cone over \(U_i\cap U_j\), then \(\theta_{ij}\) gives a \(\mathbb C\)-derivation
\[
\delta_{ij}:\ R_{ij}\ \longrightarrow\ R_{ij}
\]
that is homogeneous of degree \(0\).
Exponentiating \(1+\varepsilon\delta_{ij}\) yields graded automorphisms of the trivial graded first order thickening of \(R_{ij}\).

\subsubsection*{{\bf Step 3.} We glue graded local cones to a global graded deformation of \(A\).}
Use the graded automorphisms \(1+\varepsilon\delta_{ij}\) to glue the trivial graded thickenings of the local cones \(\widetilde U_i\). The result is a graded \(\mathbb C[\varepsilon]\)-algebra \(\mathcal A\) flat over \(\mathbb C[\varepsilon]\) whose special fiber is \(A\). Since all local derivations \(\delta_{ij}\) are homogeneous of degree \(0\), the glued algebra \(\mathcal A\) inherits a grading compatible with the projection \(\mathcal A\to\mathbb C[\varepsilon]\) and gives a graded first order deformation of \(A\). By construction the class of \(\mathcal A\) in \(T^1(A)_0\) depends only on the Čech class \(\xi\).

Define \(\Phi(\xi)\) to be the class of the graded deformation \(\mathcal A\) in \(T^1(A)_0\). This yields a well defined linear map
\[
\Phi:\ H^1(Y,T_Y)\longrightarrow T^1(A)_0.
\]

\subsection{The map \(\Phi\) is injective.}

We prove that \(\Phi\) is injective. Suppose \(\xi\in H^1(Y,T_Y)\) is such that \(\Phi(\xi)=0\in T^1(A)_0\). By definition this means that the graded first order deformation \(\mathcal A\) constructed in \S\ref{sec:construction} is \emph{graded-trivial}: there exists a graded \(A\)-algebra isomorphism
\[
\Psi:\ A\otimes_{\mathbb C}\mathbb C[\varepsilon]\ \xrightarrow{\ \simeq\ }\ \mathcal A.
\]
Projectivizing (i.e.\ applying \(\operatorname{Proj}\) to the graded algebras) yields an isomorphism of families over \(\Spec(\mathbb C[\varepsilon]/(\varepsilon^2))\)
\[
\operatorname{Proj}(\Psi):\ Y\times\Spec\mathbb C[\varepsilon]/(\varepsilon^2)\ \xrightarrow{\ \simeq\ }\ \operatorname{Proj}(\mathcal A)=\mathcal Y.
\]
Hence, the projective first order deformation \(\mathcal Y\) is trivial. But \(\mathcal Y\) was constructed from the Čech representative \(\xi\); so \(\xi=0\) in \(H^1(Y,T_Y)\). Therefore the kernel of \(\Phi\) is zero and \(\Phi\) is injective.

\begin{remark}
The injectivity argument is very general and uses only the fact that graded triviality of a deformation of the coordinate ring implies triviality of the projective deformation obtained by $\operatorname{Proj}$. The map is therefore faithful: nontrivial projective deformations produce nontrivial graded affine cone deformations.
\end{remark}

\subsection{Algebraic interpretation using Ext groups.}

One may give a more algebraic description of \(\Phi\) using the Ext groups and local cohomology. There are two standard facts that underlie the picture:

\begin{enumerate}[(a)]
\item For the graded algebra \(A=\bigoplus_{m\ge0}A_m\) the graded deformation group \(T^1(A)\) is the graded Ext group \(\Ext^1_A(\Omega^1_{A/\mathbb C},A)\). Graded deformations preserving grading are parametrized by the degree-zero piece \(T^1(A)_0\).
\item There is a spectral sequence 
relating graded Ext groups of \(A\) to sheaf Ext groups on \(Y=\operatorname{Proj}A\), and in low degrees one obtains a map
\[
H^1(Y,T_Y)\ \longrightarrow\ \Ext^1_A(\Omega^1_{A/\mathbb C},A)_0 = T^1(A)_0.
\]
This map is the same as the geometric one constructed above. The injectivity then follows from the same projectivization argument: a trivial class in \(T^1(A)_0\) yields a trivial projective deformation, hence trivial class in \(H^1(Y,T_Y)\).
\end{enumerate}

A standard reference for these connections (graded Ext vs. sheaf Ext and the relation of graded deformations with projective deformations) is the deformation theory chapter of any text on deformation theory or on graded algebras e.g. in Sernesi \cite{S-06} or in Pinkham \cite{P-74}.

\vspace{0.2cm}

Combining the two viewpoints we obtain the statement already proved geometrically:

\noindent {\it Proof of \ref{Alg}.}
If \(H^1(Y,T_Y)\ne0\), pick a nonzero class \(\xi\). By the construction above \(\Phi(\xi)\in T^1(A)_0\) is the class of a graded first order deformation of \(A\). Since \(\Phi\) is injective, \(\Phi(\xi)\ne0\). Therefore \(T^1(A)_0\ne0\) and \(C(Y)\) is nonrigid.

%%%%%%%%%%%%%%%%%%%%%%%%%%%%%%%%%%%%%%%%%%%%%%%%%%%%%%%%%%%%%%%%%%%%%%%%%%
%%%%%%%%%%%%%%%%%%%%%%%%%%%%%%%%%%%%%%%%%%%%%%%%%%%%%%%%%%%%%%%%%%%%%%%%%%
\section*{Appendix H}

In this appendix we study the evaluation matrix for $H^0(\PP^1,\T_{\PP^1})\to \bigoplus_{p\in\operatorname{Supp} D}\T_{\PP^1,p}$.

%\vspace{0.2cm]

\noindent Let \(C=\PP^1_k\) over an algebraically closed field \(k\).
Fix \(g\) disjoint ordered pairs \((a_i,b_i)\) of distinct points of \(C\) (so the $2g$ points are pairwise distinct). Let
\[
D=\sum_{i=1}^g (a_i+b_i)
\]
be the reduced effective divisor of degree \(2g\). We consider the restriction (evaluation) map
\[
\mathrm{ev}: H^0(\PP^1,\T_{\PP^1}) \longrightarrow H^0(\PP^1,\T_{\PP^1}|_D)
\cong \bigoplus_{p\in\operatorname{Supp} D} \T_{\PP^1,p},
\]
whose matrix (in appropriate bases) is the \(3\times 2g\) matrix we now display.

\bigskip

\noindent\textbf{Global sections of \(\T_{\PP^1}\).} On the affine chart \(U=\{Y\neq0\}\) with coordinate \(t=X/Y\),
every global section of \(\T_{\PP^1}\) can be written as
\[
v(t) \;=\; (\alpha + \beta t + \gamma t^2)\,\frac{\partial}{\partial t},
\qquad (\alpha,\beta,\gamma)\in k^3,
\]
and these three coefficients give a basis of \(H^0(\PP^1,\T_{\PP^1})\cong H^0(\PP^1,\OO(2))\cong k^3\).

If a point \(p\in U\) has affine coordinate \(t=p\in k\), the value of \(v\) at \(p\) (in the tangent line identified with the $\partial/\partial t$ direction) is
\[
v(p) = \alpha + \beta p + \gamma p^2.
\]

If a point is the point at infinity \(\infty=[1:0]\), then we work on the chart \(s=1/t\). One checks (see below) that the evaluation at \(\infty\) gives the linear functional \((\alpha,\beta,\gamma)\mapsto -\gamma\) on the basis \((\alpha,\beta,\gamma)\). Up to an overall sign convention this corresponds to the column vector \([0,0,1]^\top\) in the matrix below (the sign does not affect linear independence or rank).

\bigskip

\noindent\textbf{The explicit \(3\times 2g\) evaluation matrix.}  
Label the $2g$ distinct marked points as \(p_1,p_2,\dots,p_{2g}\). For each \(p_j\) choose its affine coordinate \(t_j\in k\) if \(p_j\neq\infty\), and write \(t_j=\infty\) if \(p_j\) is the point at infinity. In the bases
\[
\text{(source basis)}\quad \{\,e_1,e_2,e_3\,\} \leftrightarrow \{\ 1,t,t^2\ \}
\qquad\text{and}\qquad
\text{(target basis at each }p_j)\ \frac{\partial}{\partial t}\ \text{(resp.\ }\frac{\partial}{\partial s}\text{ at }\infty),
\]
the evaluation map is represented by the \(3\times 2g\) matrix
\[
M \;=\; \begin{bmatrix}
1 & 1 & \cdots & 1 & \cdots & 1 \\
t_1 & t_2 & \cdots & t_j & \cdots & t_{2g} \\
t_1^2 & t_2^2 & \cdots & t_j^2 & \cdots & t_{2g}^2
\end{bmatrix},
\]
where each column corresponding to a finite point \(p_j\) equals \(\begin{bmatrix}1\\ t_j\\ t_j^2\end{bmatrix}\).  
If some \(p_j=\infty\) we may replace the corresponding column by \(\begin{bmatrix}0\\0\\1\end{bmatrix}\) (or \(\begin{bmatrix}0\\0\\-1\end{bmatrix}\) depending on the sign convention); the sign does not affect rank.

Therefore, \(M\) is simply the \(3\times 2g\) matrix whose finite columns are the usual Vandermonde columns \([1,t,t^2]^\top\).

\bigskip

\noindent\textbf{Remark (evaluation at infinity calculation).}
Write \(s=1/t\). The vector field can be expressed in the $s$-chart as
\[
v = (\alpha + \beta t + \gamma t^2)\frac{\partial}{\partial t}
= -(\alpha s^2 + \beta s + \gamma)\frac{\partial}{\partial s}.
\]
Evaluating at \(s=0\) (the point \(\infty\)) gives the value \(-\gamma\partial/\partial s\). Hence the column for \(\infty\) is proportional to \([0,0,-1]^\top\); up to sign we may take \([0,0,1]^\top\).

\bigskip

\noindent\textbf{Rank analysis via $3\times3$ minors (Vandermonde).}

To prove the evaluation map has rank \(3\) (when possible), it suffices to find a nonzero \(3\times3\) minor of \(M\).
Pick any three columns corresponding to three distinct finite points with affine coordinates \(u,v,w\in k\).
The \(3\times3\) matrix formed by these columns is
\[
V(u,v,w)
=\begin{bmatrix}
1 & 1 & 1\\[4pt]
u & v & w\\[4pt]
u^2 & v^2 & w^2
\end{bmatrix}.
\]
Its determinant is the classical Vandermonde determinant:
\[
\det V(u,v,w) = (v-u)(w-u)(w-v).
\]
Thus \(\det V(u,v,w)\neq 0\) \(\iff\) \(u,v,w\) are pairwise distinct. Consequently, if there exist three finite marked points with distinct affine coordinates, then some \(3\times3\) minor of \(M\) is nonzero and so \(\operatorname{rank}M=3\).

If one of the three chosen points equals \(\infty\), say pick columns for \(u,v,\infty\) with \(u\neq v\), the \(3\times3\) matrix (after using column \([0,0,1]^\top\) for \(\infty\)) is
\[
\begin{bmatrix}
1 & 1 & 0\\
u & v & 0\\
u^2 & v^2 & 1
\end{bmatrix},
\]
whose determinant equals
\[
\det\begin{bmatrix}
1 & 1 & 0\\
u & v & 0\\
u^2 & v^2 & 1
\end{bmatrix}
= \det\begin{bmatrix}
1 & 1 \\ u & v
\end{bmatrix} \cdot 1
= v-u \neq 0
\]
when \(u\neq v\). So a finite pair plus \(\infty\) also yields a nonzero minor provided the two finite points are distinct.

Therefore:

\begin{enumerate}[(i)]
  \item If there exist three distinct finite marked points, \(\rank M=3\).
  \item If there exist two distinct finite marked points and at least one marked point at \(\infty\), \(\rank M=3\).
  \item If there are no three distinct marked points among the \(2g\) (i.e.\ at most two distinct affine values and possibly \(\infty\)), then \(\rank M\le 2\).
\end{enumerate}

\bigskip

\noindent\textbf{Immediate corollary for our nodal-curve problem.} We assumed the \(2g\) points \(\{a_1,b_1,\dots,a_g,b_g\}\) are pairwise distinct. Hence, if \(g\ge 2\) there are at least \(4\) distinct marked points, and in particular there exist (many) choices of three distinct finite marked points; therefore, \(\rank M=3\) always when \(g\ge 2\). When \(g=1\) we have only two points and therefore \(\rank M\le 2\) (indeed generically \(\rank M=2\) and then \(h^1(\PP^1,\T_{\PP^1}(-D))=2-2=0\) as computed earlier).

\bigskip

\subsection{Appendix: Special configurations where the evaluation rank can drop}

Below we list and analyze all ways the rank of the evaluation map \(M\) can be \(<3\). Because of the Vandermonde argument above, the only way to have \(\rank M < 3\) is that every \(3\times3\) minor vanishes; equivalently, all columns of \(M\) lie in some fixed \(2\)-dimensional subspace of \(k^3\). Geometrically, this means there exists a nonzero quadratic polynomial
\[
Q(t)=A + B t + C t^2 \quad (A,B,C\in k,\ \text{not all }0)
\]
such that
\[
Q(t_j)=0 \qquad\text{for every finite marked affine coordinate }t_j,
\]
and also \(Q\) must vanish appropriately at \(\infty\) if \(\infty\) is among the marked points (recall that evaluation at \(\infty\) samples the \(\gamma\)-coefficient).

Important observations that classify the possibilities:

\begin{enumerate}
  \item A nonzero polynomial \(Q(t)=A+Bt+Ct^2\) of degree \(\le2\) has at most \(2\) distinct finite roots in \(k\), unless \(Q\equiv 0\).
  \item Therefore if there are \(\ge 3\) distinct finite marked points, no nonzero quadratic \(Q\) can vanish at all of them, so \(\rank M\ge 3\).
  \item If there are only two distinct finite marked coordinates \(u\) and \(v\) among the marked points (but perhaps with multiplicities in the list of marked points), one can find a nonzero quadratic \(Q\) with roots exactly \(u\) and \(v\), e.g. \(Q(t)=(t-u)(t-v)\). In that case all finite columns lie in the 2-dimensional subspace orthogonal to the linear functional \((A,B,C)\) with \(Q(t)=A+Bt+Ct^2\). If no marked point is at infinity, this yields \(\rank M\le 2\).
  \item If one of the marked points is at \(\infty\), vanishing at \(\infty\) imposes the condition \(\gamma=0\) (because evaluation at infinity picks out \(-\gamma\)). Thus for a quadratic \(Q\) to vanish at \(\infty\) and at finitely many affine points it must actually be linear in \(t\), and hence can vanish at at most one finite point (unless it is identically zero). So the presence of \(\infty\) typically prevents a large simultaneous vanishing unless the finite points are very few.
\end{enumerate}

Combining these observations shows the only situations (over an algebraically closed field \(k\)) in which \(\rank M\le 2\) are:

\begin{enumerate}[(i)]
  \item \(\mathbf{g=0}\): no marked points, \(M\) is empty and trivially rank \(0\).
  \item \(\mathbf{g=1}\): two distinct marked points (no three points exist), so \(\rank M\le 2\). Generically, \(\rank M=2\).
  \item \(\mathbf{\text{Some marked points coincide (not allowed in our hypothesis).}}\) If the \(2g\) preimages are not pairwise distinct, then it is possible to have at most two distinct affine coordinates among them, and then \(\rank M\le 2\).
  \item \(\mathbf{\text{Special symmetric case (degenerate):}}\) If all finite marked points lie in the set of roots of a fixed quadratic polynomial \(Q\) (i.e.\ they take only two distinct affine values), and additionally there is no marked point at \(\infty\), then \(\rank M\le 2\). This is a genuinely special (nongeneric) configuration.
\end{enumerate}

Since in the problem statement we require \(g\) disjoint pairs of smooth points, the \(2g\) points are pairwise distinct, so the only relevant possibility for rank \(<3\) is when \(2g\le 2\), i.e.\ \(g\le 1\). Hence for all allowed inputs with \(g\ge 2\) we always have \(\rank M=3\).

\bigskip

\noindent\textit{In conclusion we have the following:}
\begin{enumerate}
  \item The evaluation matrix \(M\) (explicitly written above) has columns \([1,t_j,t_j^2]^\top\) for finite marked points and \([0,0,1]^\top\) for \(\infty\).
  \item Any three distinct finite points give a \(3\times3\) Vandermonde minor \((v-u)(w-u)(w-v)\), so \(\rank M=3\) whenever there are three distinct finite marked points.
  \item Under the hypothesis that the \(2g\) preimages of the nodes are pairwise distinct, if \(g\ge 2\) then \(\rank M=3\). For \(g=1\) one has \(\rank M\le 2\) (generically \(=2\)).
  \item Therefore, the long exact sequence computation in the main note shows \(h^1(\PP^1,\T_{\PP^1}(-D))=2g-3\) for \(g\ge 2\) and \(=0\) for \(g=1\), and adding the \(g\) local smoothing directions gives \(\dim_k\Ext^1_X(\Omega^1_X,\OO_X)=3g-3\) for \(g\ge2\) and \(=1\) for \(g=1\).
\end{enumerate}

%%%%%%%%%%%%%%%%%%%%%%%%%%%%%%%%%%%%%%%%%%%%%%%%%%%%%%%%%%%%%%%%%%%%%%%%%%%
%%%%%%%%%%%%%%%%%%%%%%%%%%%%%%%%%%%%%%%%%%%%%%%%%%%%%%%%%%%%%%%%%%%%%%%%%%%

\vspace{0.1cm}

\end{document}